\let\hat\widehat
\let\tilde\widetilde
\newcommand{\cB}{\mathcal{B}}
\newcommand{\cH}{\mathcal{H}}
\newcommand{\cX}{\mathcal{X}}
\newcommand{\boldf}{\boldsymbol{f}}
\newcommand{\boldy}{\boldsymbol{y}}
\newcommand{\boldx}{\boldsymbol{x}}
\newcommand{\boldK}{\boldsymbol{K}}
\newcommand{\argmin}{\mathop{\mathrm{argmin}}}
\DeclareMathOperator{\Var}{{\rm Var}}
\newcommand{\diag}{{\rm diag}}
\newcommand{\order}{\ensuremath{\mathcal{O}}}
\newcommand{\widehatmu}{\widehat{\mu}}
\newcommand{\matsnorm}[2]{|\!| #1 | \!|_{{#2}}}
\newcommand{\opnorm}[1]{\ensuremath{\matsnorm{#1}{\tiny{\mbox{op}}}}}
\DeclareMathOperator{\tr}{tr}
\DeclareMathOperator{\E}{E}
\newtheorem{proposition}{\indent \bf Proposition}
\numberwithin{equation}{section}
\def\bbR{\mathbb{R}}
\newcommand{\boldw}{\boldsymbol{w}}
\newcommand{\boldalpha}{\boldsymbol{\alpha}}
\renewcommand{\Pr}{\text{P}}
\renewcommand{\Pr}{\text{P}}
\theoremstyle{plain}
\numberwithin{equation}{section}
\newtheorem{Theorem}{Theorem}[section]
\newtheorem{Lemma}[Theorem]{Lemma}
\newtheorem{Corollary}[Theorem]{Corollary}
\newtheorem{Assumption}{Assumption}
\newtheorem{lemma}{\indent \bf Lemma}
\newtheoremstyle{mytheoremstyle} % name
    {\topsep}                    % Space above
    {\topsep}                    % Space below
    {\normalfont}                   % Body font
    {}                           % Indent amount
    {\bfseries}                   % Theorem head font
    {.}                          % Punctuation after theorem head
    {.5em}                       % Space after theorem head
    {}  % Theorem head spec (can be left empty, meaning ¡®normal¡¯)
\theoremstyle{mytheoremstyle}
\newcommand{\BlackBox}{\rule{1.5ex}{1.5ex}}  % end of proof
\def\QED{~\rule[-1pt]{5pt}{5pt}\par\medskip}
\newenvironment{proof}{\par\noindent{\bf Proof\ }}{\hfill\BlackBox\\[2mm]}
\numberwithin{equation}{section}
\numberwithin{theorem}{section}
\long\def\comment#1{}
\begin{document}
\title{Early Stopping for Nonparametric Testing}

\author
{
    Meimei Liu\thanks{Postdoc, Department of Statistical Science, Duke University, Durham, NC, 27705. E-mail: meimei.liu@duke.edu.},
    Guang Cheng\thanks{Professor, Department of Statistics, Purdue University, West Lafayette, IN 47906. E-mail: chengg@purdue.edu. Research Sponsored by NSF CAREER Award DMS-1151692, DMS-1712907, and Office of Naval Research (ONR N00014-15-1-2331).}
}

\date{}
\maketitle

\begin{abstract}
Early stopping of iterative algorithms is an algorithmic regularization method to avoid over-fitting in estimation and classification. In this paper, we show that early stopping can also be applied to obtain the minimax optimal testing in a general non-parametric setup. Specifically, a Wald-type test statistic is obtained based on an iterated estimate produced by functional gradient descent algorithms in a reproducing kernel Hilbert space. A notable contribution is to establish a ``sharp'' stopping rule: when the number of iterations achieves an optimal order, testing optimality is achievable; otherwise, testing optimality becomes impossible. As a by-product, a similar sharpness result is also derived for minimax optimal estimation under early stopping studied in \cite{raskutti2014early} and \cite{wei2017early}. All obtained results hold for various kernel classes, including Sobolev smoothness classes and Gaussian kernel classes. 
\end{abstract}

\noindent{\bf Key Words:} early stopping, gradient descent algorithm, minimax optimality, nonparametric testing, sharp stopping rule.

% -------------------------- %

\section{Introduction}
 As a computationally efficient approach, early stopping often works by terminating an iterative algorithm on a pre-specified number of steps to avoid over-fitting. Recently, various forms of early stopping have been proposed in estimation and classification.  Examples include boosting algorithms (\cite{buhlmann2003boosting}, \cite{zhang2005boosting}, \cite{wei2017early}); gradient descent over reproducing kernel Hilbert spaces (\cite{yao2007early}, \cite{raskutti2014early}) and reference therein. However, {\it{statistical inference}} based on early stopping has largely remained unexplored. 

In this paper, we apply the early stopping regularization to nonparametric testing and characterize its minimax optimality from an algorithmic perspective. Notably, it differs from the traditional framework of using penalization methods to conduct statistical inference. Recall that classical nonparametric inference often involves minimizing objective functions in the {\it{loss \textrm{+} penalty}} form to avoid overfitting; examples include the penalized likelihood ratio test, Wald-type test, see \cite{fan2007}, \cite{shang2013local}, \cite{liu2018nonparametric} and reference therein. However, solving a quadratic program in the penalized regularization requires $O(n^3)$ basic operations. Additionally, in practice cross validation method (\cite{golub1979generalized}) is often used as a tuning procedure which is known to be optimal for estimation but suboptimal for testing; see \cite{fan2001generalized}. As far as we are aware, there is no theoretically justified tuning procedure for obtaining optimal testing in our setup. We address this issue by proposing a data-dependent early stopping rule that enjoys both theoretical support and computational efficiency. 

To be more specific, we first develop a Wald-type test statistic $D_{n,t}$ based on the iterated estimator $f_t$ with $t$ being the number of iterations. As illustrated in Figure \ref{figure:est_test} (a) and (b), the testing power demonstrates a parabolic pattern. Specifically, it increases as the iteration grows in the beginning, and then decreases after reaching its largest value when $t=T^*$, implying how over-fitting affects the power performance. To precisely quantify $T^*$, we analyze the power performance by characterizing the strength of the weakest detectable signals (SWDS). We show that SWDS at each iteration is controlled by the bias of the iterated estimator and the standard derivation of the test statistic. In fact, each iterative step reduces the former but increase the latter. Such a tradeoff in testing is rather different from the classical ``bias-variance'' tradeoff in estimation; as shown in Figure \ref{figure:est_test} (c). Hence, the early stopping rule to be provided is different from those in the literature such as \cite{raskutti2014early} and \cite{wei2017early}; also see Figure \ref{figure:est_test} (a) and (b) in comparison with power and MSE. 
 \begin{figure}[htb!]
 \centering
 \begin{tabular}{ccc}
\includegraphics[scale=0.32]{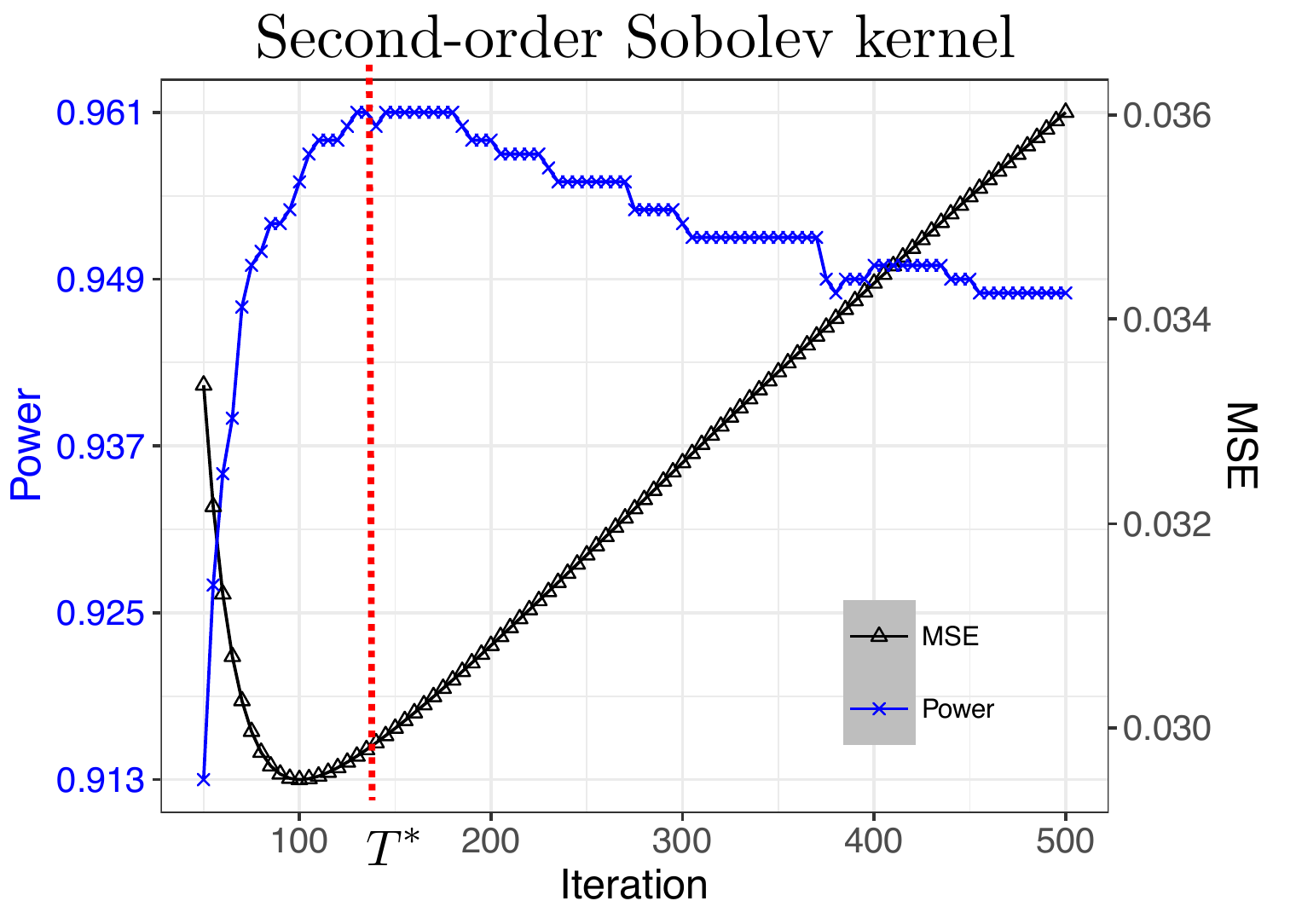}&\includegraphics[scale=0.32]{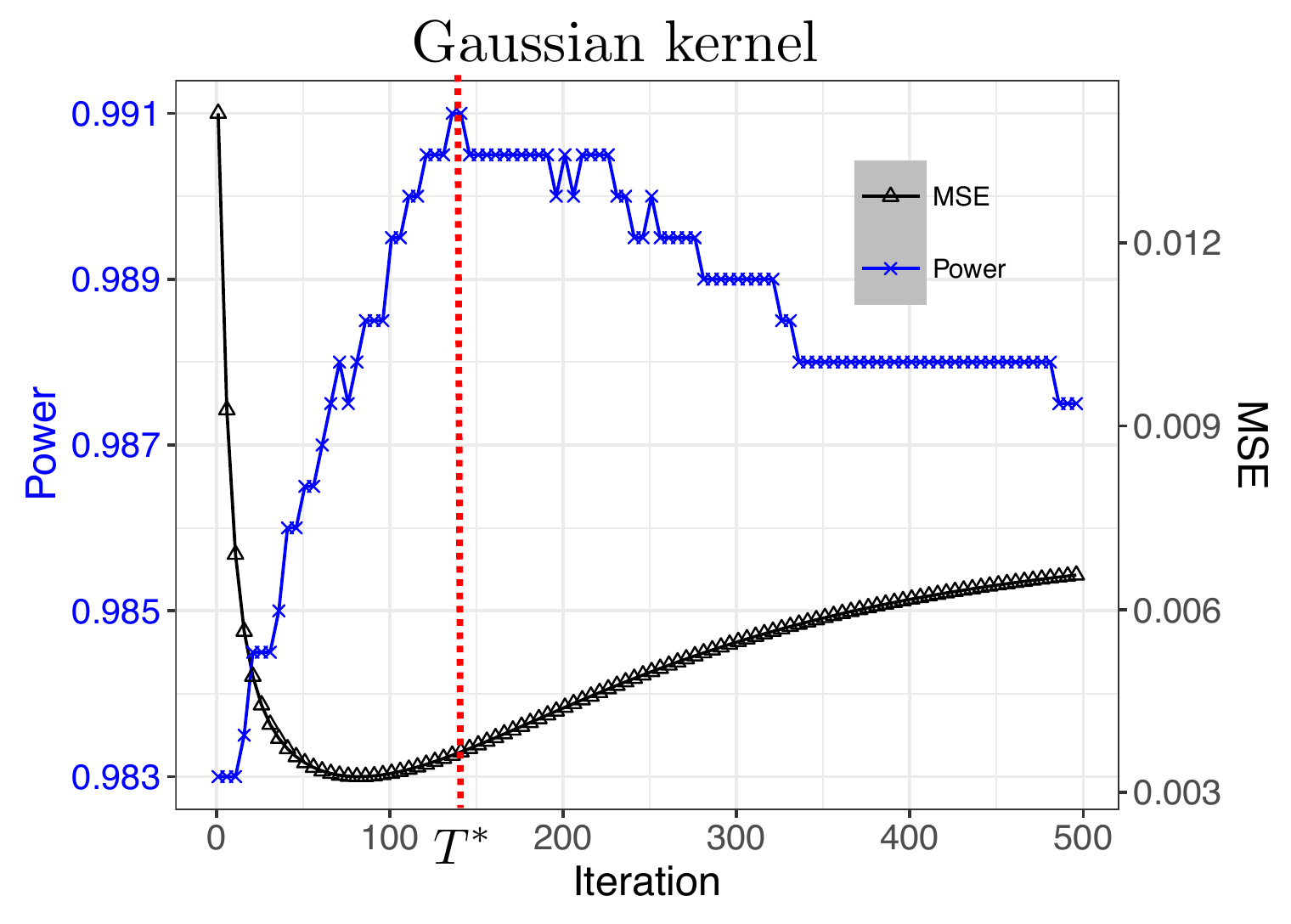}& \includegraphics[scale=0.30]{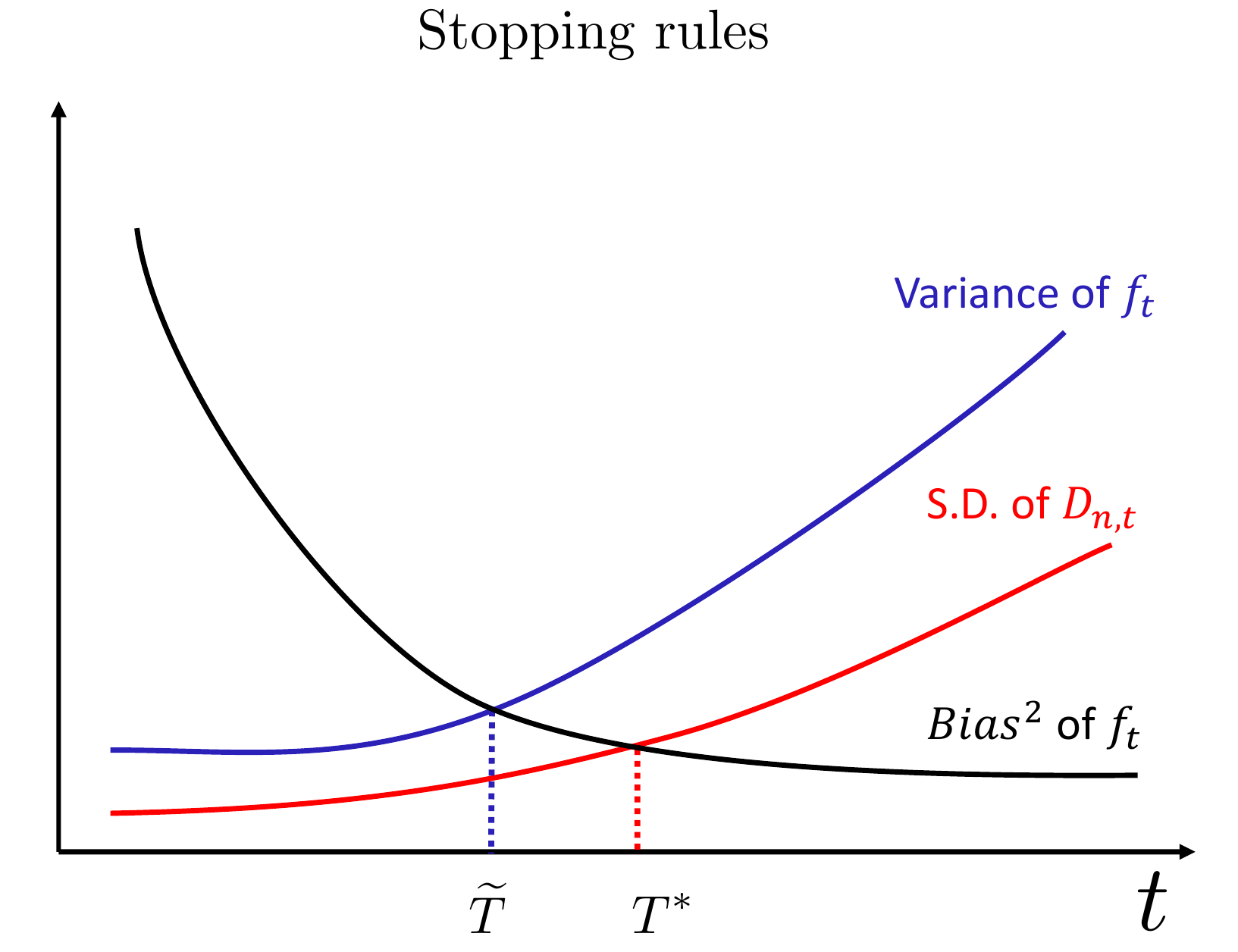} \\   ~~~~~$(a)$ &~~~~~ $(b)$&~~~~~$(c)$\\
 \end{tabular}
  \caption{  $(a)$ and $(b)$ are mean square error (MSE) and power performance of gradient descent update at each iteration with constant step size $\alpha=1$; Power was calculated based on $500$ replicates. $(a)$ Data were generated via 
   $y_i=0.5x_i^2 + 0.5\sin(4\pi x_i) + \epsilon_i$ with sample size $n=200$, $\{x_i\}_{i=1}^n \sim Unif[0,1]$, $\epsilon_i \sim N(0,1)$. $(b)$ Data were generated by $y_i=0.5x_i^2 + 0.5|x_i-0.5| + \epsilon_i$ with sample size $n=200$. $(c)$ Stopping rules for estimation and testing based on different tradeoff criteria.} \label{figure:est_test}
\end{figure}

The above analysis apply to many reproducing kernels, and lead to specific optimal testing rate, depending on their eigendecay rate. In the specific examples of polynomial decaying kernel and exponential decaying kernel, we further show that  
the developed stopping rule is indeed ``sharp'': 
testing optimality is obtained if and only if the number of iterations obtains an optimal order defined by the stopping rule. As a by-product, we  prove that the early stopping rule in \cite{raskutti2014early} and \cite{wei2017early} is also ``sharp'' for optimal estimation.

\vspace{-5pt}
\section{Background and Problem Formulation} \label{sec:introduction}
We begin by introducing some background on reproducing kernel Hilbert space (RKHS),  and functional gradient descent algorithms in the RKHS, together with our nonparametric testing formulation. 
\vspace{-10pt}
\subsection{Nonparametric estimation in reproducing kernel Hilbert spaces}\label{sub:rkhs}
Consider the following nonparametric model 
\begin{equation}\label{eq:model}
y_i = f(x_i) + \epsilon_i, \quad i=1,\cdots, n,
\end{equation}
where $x_i\in \mathcal{X} \subset \mathbb{R}^d$ for a fixed $d\geq 1$ are random covariates, and $\epsilon_i$ are Gaussian random noise with mean zero and variance $\sigma^2$. Throughout we assume that
$f\in\cH$, where $\cH\subset L^2(P_X)$ is a reproducing kernel Hilbert
space (RKHS) associated with an inner product
$\langle \cdot,\cdot \rangle_{\cH}$ and a reproducing kernel function $K(\cdot,\cdot):
\mathcal{X}\times\mathcal{X}\to\bbR$. 
By Mercer's Theorem, $K$ has the following spectral expansion:
\begin{equation}\label{M:decom:K}
K(x,x') = \sum_{i=1}^\infty \mu_i \phi_i(x)\phi_i(x'),
\,\,\,\,x,x'\in\mathcal{X},
\end{equation}
where $\mu_1 \geq \mu_2 \geq \cdots \geq 0$ 
is a sequence of eigenvalues and $\{\phi_{i}\}_{i=1}^{\infty}$ form a
basis in $L^2(P_X)$. Moreover, for any $i,j \in \mathbb{N}$,
$$
\langle \phi_{i}, \phi_{j}\rangle_{L^2(P_X)} = \delta_{ij} \quad \quad
\text{and} \quad \quad \langle \phi_{i}, \phi_{j}\rangle_{\cH} = \delta_{ij}/\mu_i.
$$

In the literature, e.g., \cite{guo2002inference} and \cite{shang2013local}, it is common to assume that $\phi_j$'s are uniformly bounded. This is also assumed throughout this paper. 
\begin{Assumption}\label{asmp:A1}
    The eigenfunctions $\{\phi_k\}_{k=0}^\infty$ are uniformly bounded on $\cX$, i.e., there exists a finite constant $c_K>0$ such that
    \begin{equation*}
    \sup_{j\ge 1} \|\phi_j\|_{\sup} \le c_K.
    \end{equation*}
\end{Assumption}

Two types of kernel are often considered in the nonparametric literature, depending on how fast its eigenvalues decay to zero. The first is that $\mu_i\asymp i^{-2m}$, leading to the so-called polynomial decay kernel (PDK) of order $m>0$. For instance, an $m$-order Sobolev space is an RKHS with a PDK of order $m$; see \cite{wahba1990spline}, and the trigonometric basis in periodic Sobolev space with PDK satisfies Assumption \ref{asmp:A1} trivially. The second is that $\mu_i\asymp\exp(- \beta i^p)$ for some constant $\beta, p > 0$, corresponding to the so-called exponential-polynomial decay kernel (EDK) of order $p>0$; see \cite{scholkopf1999advances}. In particular, for EDK of order two, an example is $K(x_1,x_2)=\exp(-(x_1-x_2)^2/2)$. In the latter case, Assumption \ref{asmp:A1} holds according to \cite{lu2016nonparametric}.

By representer theorem, any $f\in \cH$ can be presented as
 $$f(\cdot) = \frac{1}{\sqrt{n}} \sum_{i=1}^n w_i K(x_i,\cdot) + \xi(\cdot),$$ where $\xi\in \cH$ and $\xi(\cdot) \perp \text{span} \{K(x_1,\cdot), \cdots, K(x_n, \cdot)\}$. Given $\boldx = (x_1,\cdots, x_n)$, define an empirical kernel $[\boldK]_{ij} = \frac{1}{n} K(x_i, x_j)$ and $\boldf= (f(x_1), \cdots, f(x_n))$, then $\boldf = \sqrt{n}\boldK w$, where $w=(w_1,\cdots, w_n)^\top \in \mathbb{R}^n$. 

\subsection{Gradient Descent Algorithms}
Given the samples $\{(x_i, y_i)\}$, consider minimizing the least-square loss function
$$
\mathcal{L}(f) : = \frac{1}{2n} \sum_{i=1}^n(y_i - f(x_i))^2
$$
over a Hilbert space $\cH$. Note that by representer theorem, $f(x) = \langle f, K(x,\cdot) \rangle_{\cH}$, then the gradient of $\mathcal{L}(f)$ is $\nabla \mathcal{L} (f) = n^{-1}\sum_{i=1}^n (f(x_i)- y_i)K(x_i, \cdot)$. Given $\boldx  = (x_1, \cdots, x_n)$ and $\boldy  = (y_1, \cdots, y_n)$,  define $\boldf_{t}=(f_{t}(x_1), \cdots, f_{t}(x_n))$ for $t=0,1,\cdots$. Then straightforward calculation shows that the functional gradient descent algorithm generates a sequence of vectors $\{\boldf_{t}\}_{t=0}^\infty$ via the recursion 
\begin{equation}\label{eq:recursion_eq}
\boldf_{t+1} = \boldf_{t} -\alpha_t \boldK (\boldf_{t} - \boldy ), 
\end{equation} 
where $\{\alpha_t\}_{t=0}^\infty$ is the step sizes. Denote the total step size upto the $t$-th step as 
$
\eta_t = \sum_{\tau = 0}^{t-1} \alpha_\tau.$ 
Consider the singular value decomposition $\boldK=U\Lambda U^\top$, where $UU^\top = I_{n}$ and $\Lambda= \diag(\widehat{\mu}_1, \widehat{\mu}_2, \cdots, \widehat{\mu}_n)$ with $\widehat{\mu}_1 \geq \widehat{\mu}_2 \geq \cdots \geq \widehat{\mu}_n\geq 0$. 
We have the following assumption for the step sizes and $\eta_t$. 

\begin{Assumption}\label{assump:step_size}
The step size $\{\alpha_t\}_{t=0}^\infty$  is non-increasing; for all $\tau = 0,1, 2, \cdots$, $0\leq \alpha_\tau \leq \min\{1, 1/\widehat{\mu}_1\}$. The total step size $\eta_t = \sum_{\tau = 0}^{t-1} \alpha_\tau $ diverges as $t\to \infty$; for $0\leq t_1\ll t_2$ as $t_2\to \infty$ , $\eta_{t_1} \ll \eta_{t_2}$. 
\end{Assumption}

Assumption \ref{assump:step_size} supposes the step size $\{\alpha_t\}_{t=0}^\infty$ to be bounded and  non-increasing, but cannot decrease too fast as $t$ diverges. Many choices of step sizes satisfy Assumption \ref{assump:step_size}. A trivial  example is to choose a constant step size $\alpha_0 = \cdots =\alpha_t = \min\{1, 1/\widehat{\mu}_1\}$. 

 Define $\kappa_t = \argmin\{j:\mu_j < \frac{1}{\eta_t}\} -1$, we have the following assumption on the population eigenvalues through $\kappa_t$. 

\begin{Assumption}\label{assump:eigenvalue}
$\kappa_t$ diverges as $t\to \infty$.  
\end{Assumption}
It is easy to check that Assumption \ref{assump:eigenvalue} is satisfied in PDK and EDK to be introduced in Section \ref{sub:rkhs}. 

\subsection{Nonparametric testing} \label{sec:main:test}
Our goal is to test whether the nonparametric function in (\ref{eq:model}) is equal to some known function. To be precise, we consider the nonparametric hypothesis testing problem 
\begin{equation*}
H_0: f=f^*\;\; \text{v.s.} \; H_1: f\in \cH\setminus\{f^*\},
\end{equation*} where $f^*$ is a hypothesized function. For convenience, assume $f^*=0$, i.e., we will test
\begin{equation}\label{eq:test_0}
H_0: f=0 \quad \text{vs.} \quad H_1: f\in \cH \setminus\{0\}.
\end{equation}
In general, testing $f=f^*$ (for an arbitrary known $f^*$) is equivalent to testing $f_*\equiv f-f^*=0$.
So, (\ref{eq:test_0}) has no loss of generality. Based on the iterated estimator $\boldf_{t}$, we propose the following Wald-type test statistic:

\begin{equation}\label{wald:type:test:stat}
D_{n,t}=\|\boldf_{t}\|_n^2,
\end{equation}
where $\|\boldf_{t}\|_n^2 = \frac{1}{n}\sum_{i=1}^n f^2_t(x_i)$. In what follows, we will derive the null limit distribution of $D_{n,t}$, and explicitely show how the stopping time affects minimax optimality of testing. 

\vspace{-5pt}
\section{Main Results}\label{sec:main}

\subsection{Stopping rule for nonparametric testing} \label{sec:main:rule} 
Given a sequence of step size $\{\alpha_t\}_{t=0}^\infty$ satisfying Assumption \ref{assump:step_size}, we first introduce the stopping rule as follows: 
\begin{equation}\label{eq:stop_rule}
T^*:= \argmin\left\{ t\in \mathbb{N} \; \big|\; \frac{1}{\eta_t} <  \frac{\sigma}{n} \sqrt{\sum_{i=1}^n \min\{1, \eta_t \widehat{\mu}_i\}}\right\}. 
\end{equation}
 As will be clarified in Section \ref{sec:main:theorem}, the intuition underlying the stopping rule (\ref{eq:stop_rule}) is that $\frac{1}{\eta_t}$ controls the bias of the iterated estimator $f_{t}$, which is a decrease function of $t$; $\frac{1}{n} \sqrt{\sum_{i=1}^n \min\{1, \eta_t \widehat{\mu_i}\}}$ is the standard deviation of the test statistic $D_{n,t}$ as an increasing function of $t$. The optimal stopping rule can be achieved by such a {\it{bias-standard deviation}} tradeoff. Recall that such a tradeoff in (\ref{eq:stop_rule}) for testing is different from another type of bias-variance tradeoff in estimation (see \cite{raskutti2014early}, \cite{wei2017early}), thus leading to different optimal stopping time. In fact, as seen in Figure \ref{figure:est_test} $(c)$, optimal estimation can be achieved at $\widetilde{T}$, which is earlier than than $T^*$. This is also empirically confirmed by Figure \ref{figure:est_test} $(a)$ and $(b)$ where minimum mean square error (MSE) can always be achieved earlier than the maximum power. Please see Section~\ref{sec:sharp:est} for more discussions.

\vspace{-5pt}
\subsection{Minimax optimal testing} \label{sec:main:theorem} 
 In this section, we first derive the null limit distribution of (standardized) $D_{n,t}$ as a standard Gaussian under mild conditions, that is, we only require the total step sizes $\eta_t$ goes to infinity. 

 Define a sequence of diagonal shrinkage matrices as $S^t = \prod_{\tau=0}^{t-1} (I_n - \alpha_\tau \Lambda) $. As stated in \cite{raskutti2014early}, the matrix $S^t$ describes the extent of shrinkage towards the origin. By Assumption \ref{assump:step_size} that $0\leq \alpha_t \leq \min\{1, 1/\widehat{\mu}_1\}$, $S^t$ is positive semidefinite. 

\begin{Theorem}\label{thm:consistency}
Suppose Assumption \ref{assump:step_size}, \ref{assump:eigenvalue} are satisfied. Then under $H_0$, as $n \to \infty$ and $t \to \infty$, we have 
$$
\frac{D_{n,t} - \mu_{n,t}}{\sigma_{n,t}} \overset{d}{\longrightarrow} N(0,1).
$$
Here $\mu_{n,t} = \E_{H_0} [D_{n,t} | \boldx] = \frac{1}{n} \tr((I_n - S^t)^2)$ and $\sigma_{n,t}^2 = \Var_{H_0} [D_{n,t} | \boldx]= \frac{2}{n^2} \tr((I-S^t)^4)$. 
\end{Theorem}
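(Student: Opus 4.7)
The plan is to diagonalize $D_{n,t}$ into a weighted sum of independent $\chi^2_1$ variables in the eigenbasis of $\boldK$, compute its conditional first two moments, and then invoke a Lyapunov CLT. First I would obtain a closed form for the iterate under $H_0$: starting from $\boldf_0=\bm{0}$ and iterating \eqref{eq:recursion_eq} in the eigenbasis $\boldK=U\Lambda U^\top$ decouples coordinatewise, yielding $\boldf_t = U(I_n-S^t)U^\top \boldy$. Under $H_0$ we have $\boldy=\bm{\epsilon}$, and setting $\tilde{\bm{\epsilon}}:=U^\top\bm{\epsilon}$, which remains $N(\bm{0},\sigma^2 I_n)$ conditional on $\boldx$ by orthogonal invariance, we get
\begin{equation*}
D_{n,t} \;=\; \frac{1}{n}\sum_{i=1}^n (1-s_i^t)^2\,\tilde\epsilon_i^2, \qquad s_i^t:=\prod_{\tau=0}^{t-1}(1-\alpha_\tau\widehat\mu_i).
\end{equation*}
The stated formulas for $\mu_{n,t}$ and $\sigma_{n,t}^2$ then follow directly from $\E\tilde\epsilon_i^2=\sigma^2$ and $\Var(\tilde\epsilon_i^2)=2\sigma^4$.

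Next, conditional on $\boldx$, I would apply the Lyapunov CLT to the independent centered summands $(1-s_i^t)^2(\tilde\epsilon_i^2-\sigma^2)$, whose $\chi^2$ tails give all moments finite. Checking the fourth-moment Lyapunov condition reduces to showing
\begin{equation*}
\Lambda_{n,t} \;:=\; \frac{\max_{1\le i\le n}(1-s_i^t)^4}{\sum_{i=1}^n(1-s_i^t)^4} \;\longrightarrow\; 0.
\end{equation*}
To bound the denominator, I would use the elementary estimate $1-s_i^t \geq \tfrac{1}{2}\min\{1,\eta_t\widehat\mu_i\}$, obtained from $s_i^t \leq \exp(-\eta_t\widehat\mu_i)$ (valid since $\alpha_\tau\widehat\mu_i\in[0,1]$ by Assumption~\ref{assump:step_size}) combined with $1-e^{-x}\geq x/2$ on $[0,1]$. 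This yields $\sum_i(1-s_i^t)^4 \geq \tfrac{1}{16}\cdot\#\{i:\widehat\mu_i\geq 1/\eta_t\}$, while the numerator is trivially at most $1$, so $\Lambda_{n,t}\to 0$ as soon as the empirical count of eigenvalues above $1/\eta_t$ diverges. Finally, since the conditional limit $N(0,1)$ does not depend on $\boldx$, a routine bounded-convergence argument on characteristic functions upgrades the conditional CLT to the unconditional one claimed in the theorem.

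The main obstacle, therefore, is tying the empirical count $\#\{i:\widehat\mu_i\geq 1/\eta_t\}$ to the population-side quantity $\kappa_t$ of Assumption~\ref{assump:eigenvalue}. I would close this gap by invoking a standard concentration bound for empirical kernel eigenvalues (of Koltchinskii/Mendelson type, available under the uniform eigenfunction bound Assumption~\ref{asmp:A1}), which delivers $\widehat\mu_i \asymp \mu_i$ on the relevant index range with high probability; consequently $\#\{i:\widehat\mu_i\geq 1/\eta_t\} \asymp \kappa_t\to\infty$. Secondary technical points — the initialization $\boldf_0=\bm{0}$, the orthogonal-invariance step, and the de-conditioning — are essentially bookkeeping.
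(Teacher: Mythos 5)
Your proposal is correct and reaches the same crux as the paper, but by a somewhat different CLT mechanism. You diagonalize $D_{n,t}$ as the weighted $\chi^2$ sum $\frac{1}{n}\sum_i (1-s_i^t)^2\tilde\epsilon_i^2$ and invoke a Lindeberg--Feller/Lyapunov argument, which reduces the problem to showing $\max_i(1-s_i^t)^4 \big/ \sum_i(1-s_i^t)^4 \to 0$; since the numerator is bounded by $1$, this is exactly $\tr((I-S^t)^4)\to\infty$. The paper instead works directly with the cumulant generating function of the Gaussian quadratic form, $\log\E_\epsilon e^{ikU} = -\tfrac12\log\det(I - 2ik(I-S^t)^2/(n\sigma_{n,t})) - ik\mu_{n,t}/\sigma_{n,t}$, expands in $k$, and shows the cubic remainder $\tr((I-S^t)^6)/(n^3\sigma_{n,t}^3)$ vanishes --- which, after bounding the trace ratio, again reduces to $\tr((I-S^t)^4)\to\infty$. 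Both approaches then need the empirical count $\tilde\kappa_t=\#\{i:\widehat\mu_i\ge 1/\eta_t\}$ to diverge; the paper closes this via Lemma~3.1 of \cite{liu2018nonparametric} (yielding the stated $1-e^{-c\kappa_t}$ probability), and you propose essentially the same empirical-eigenvalue concentration (citing Koltchinskii--Mendelson type bounds under Assumption~\ref{asmp:A1}). The final de-conditioning step --- bounded/dominated convergence on characteristic functions, possibly through a subsequence argument --- is the same in both. In short: the Lyapunov route is a bit more modular and elementary, the paper's cumulant expansion yields the asymptotic normality and an explicit error rate in one stroke; they are equivalent in difficulty and both land on the identical sufficient condition. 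One small nit: you should state explicitly that your concentration step gives $\tilde\kappa_t \gtrsim \kappa_t$ only on a high-probability event depending on $\boldx$, so the CLT holds conditionally on a sequence of events with probability tending to one; the paper handles this by quoting the probability bound $1-e^{-c\kappa_t}$ before the subsequence argument, and your proposal should be equally explicit that the de-conditioning argument must also absorb this event.
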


Then based on Theorem \ref{thm:consistency}, we have the following testing rule at significance level $\alpha$:
$$
\phi_{n,t} = I(|D_{n,t} - \mu_{n,t}|\geq z_{1-\alpha/2}\sigma_{n,t}),
$$
where $z_{1-\alpha/2}$ is the $100\times (1-\alpha/2)$th percentile of standard normal distribution.

\begin{Lemma}\label{le:mean_var}
$\mu_{n,t}   \asymp \frac{1}{n}\sum_{i=1}^n \min\{1, \eta_t\widehat{\mu}_i\}$, and $\sigma_{n,t}^2  \asymp \frac{1}{n^2}\sum_{i=1}^n \min\{1, \eta_t\widehat{\mu}_i\}$.
\end{Lemma}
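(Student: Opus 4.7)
My plan is to diagonalize both trace expressions, control each summand by elementary exponential and Bernoulli inequalities, and then compare the resulting sum of powers to the sum of first-order terms using the eigendecay structure. Since $\boldK = U\Lambda U^\top$ and $S^t = \prod_{\tau=0}^{t-1}(I_n - \alpha_\tau \Lambda)$ is a polynomial in $\Lambda$, the matrix $I_n - S^t$ is diagonal in the basis $U$ with $i$th diagonal entry $1 - s_i^t$, where $s_i^t := \prod_{\tau=0}^{t-1}(1 - \alpha_\tau \widehat{\mu}_i)$. Consequently,
$$
\mu_{n,t} = \frac{1}{n}\sum_{i=1}^n (1-s_i^t)^2, \qquad \sigma_{n,t}^2 = \frac{2}{n^2}\sum_{i=1}^n (1-s_i^t)^4,
$$
so the task reduces to a coordinate-wise analysis of $1 - s_i^t$ followed by an aggregation step.

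The per-coordinate bound I would establish is $1 - s_i^t \asymp \min\{1, \eta_t \widehat{\mu}_i\}$. By Assumption~\ref{assump:step_size}, every factor $\alpha_\tau \widehat{\mu}_i$ lies in $[0,1]$, so Bernoulli's inequality yields $s_i^t \geq 1 - \eta_t \widehat{\mu}_i$, giving the upper bound $1 - s_i^t \leq \min\{1, \eta_t \widehat{\mu}_i\}$ (combined with the trivial $1 - s_i^t \leq 1$). For the matching lower bound, $1 - x \leq e^{-x}$ implies $s_i^t \leq \exp(-\eta_t \widehat{\mu}_i)$, and $1 - e^{-y} \geq (1-e^{-1})\min\{1, y\}$ on $[0,\infty)$; together these give $1 - s_i^t \gtrsim \min\{1, \eta_t \widehat{\mu}_i\}$. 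Raising to the powers $k = 2, 4$ yields $(1 - s_i^t)^k \asymp \min\{1, \eta_t \widehat{\mu}_i\}^k$ pointwise.

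What remains is to show that, for $k \in \{2, 4\}$,
$$
\sum_{i=1}^n \min\{1, \eta_t \widehat{\mu}_i\}^k \;\asymp\; \sum_{i=1}^n \min\{1, \eta_t \widehat{\mu}_i\}.
$$
The direction $\lesssim$ is immediate from $x^k \leq x$ on $[0,1]$. For the reverse direction I would split at the empirical effective dimension $\widehat{\kappa}_t := \#\{i : \widehat{\mu}_i \geq 1/\eta_t\}$: the head indices $i \leq \widehat{\kappa}_t$ contribute exactly $\widehat{\kappa}_t$ to each sum, so only the tail needs controlling. This is where kernel structure is essential, since $\min\{1,x\}^k \gtrsim \min\{1,x\}$ fails pointwise as $x \to 0$. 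For a PDK of order $m$ ($\widehat{\mu}_i \asymp i^{-2m}$), direct power-law summation yields $\eta_t \sum_{i > \widehat{\kappa}_t} \widehat{\mu}_i \asymp \eta_t^k \sum_{i > \widehat{\kappa}_t} \widehat{\mu}_i^k \asymp \widehat{\kappa}_t$; for an EDK of order $p$ both tail sums are $O(1) = o(\widehat{\kappa}_t)$. Hence both sides are of order $\widehat{\kappa}_t$, and the lemma follows after dividing by $n$ or $n^2$. The main obstacle is precisely this tail comparison, which is genuinely kernel-specific and implicitly relies on standard concentration of $\widehat{\mu}_i$ around $\mu_i$ to tie $\widehat{\kappa}_t$ to the population quantity $\kappa_t$ from Assumption~\ref{assump:eigenvalue}.
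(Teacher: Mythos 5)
Your proposal is correct and mirrors what the paper does implicitly: the per-coordinate sandwich $1-S^t_{jj}\asymp\min\{1,\eta_t\widehat\mu_j\}$ is exactly the paper's cited Lemma~\ref{le:shrink} (from \cite{raskutti2014early}), which you re-derive with slightly different constants via the $1-x\le e^{-x}$ route, and the head/tail split at the empirical effective dimension together with the tail bound $\eta_t\sum_{i>\widehat\kappa_t}\widehat\mu_i\lesssim\widehat\kappa_t$ is the same mechanism the paper invokes through Lemma~\ref{le:tail:eigenvalues} in the proofs of Corollaries~\ref{cor:test:pdk}--\ref{cor:test:edk} and in display~(\ref{eq:trace:upper}). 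One thing worth flagging, which you correctly identify but the paper leaves implicit: the comparison $\sum_i\min\{1,\eta_t\widehat\mu_i\}^k\asymp\sum_i\min\{1,\eta_t\widehat\mu_i\}$ for $k=2,4$ is genuinely not a pointwise fact and fails for sufficiently slow eigendecay; the lemma is therefore only valid under the PDK/EDK-type tail behavior (or, more generally, under a condition like $\eta_t\sum_{i>\widehat\kappa_t}\widehat\mu_i\lesssim\widehat\kappa_t$), even though the statement is phrased unconditionally. Your honest acknowledgment of this dependence, and of the need to tie $\widehat\kappa_t$ to $\kappa_t$ via eigenvalue concentration (Lemma~\ref{le:main eigenvalues}), is exactly the right caveat.
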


Define the squared separation rate 
 $$
 d_{n,t}^2 = \frac{1}{\eta_t} + \sigma_{n,t} \asymp \frac{1}{\eta_t} + \frac{1}{n}\sqrt{\sum_{i=1}^n \min\{1, \eta_t\widehat{\mu}_i\}}.
 $$   
The separation rate $d_{n,t}$ is used to measure the distance between the null hypothesis and a sequence of alternative hypotheses. The following Theorem \ref{thm:power} shows that, if the alternative signal $f$ is separated from zero by an order $d_{n,t}$, then the proposed test statistic $D_{n,t}$ asymptotically achieves high power at the total step size $\eta_t$. To achieve the maximum power, we need to minimize $d_{n,t}$. Under the stopping rule (\ref{eq:stop_rule}), we can see that when $t= T^*$, the separation rate achieves its minimal value as $d_n^* := d_{n, T^*}$. 
\begin{Theorem}\label{thm:power}
\begin{enumerate}[$(a)$]
    \item Suppose Assumption \ref{assump:step_size} and \ref{assump:eigenvalue} are satisfied. For any $\varepsilon >0$, there exist positive
 constants $C_\varepsilon$, $t_\varepsilon$ and $N_\varepsilon$
 such that with probability greater than $1-e^{-c\kappa_t}$,
 \begin{equation*}
 \inf_{t\geq t_\varepsilon }\inf_{n \geq N_\varepsilon }\inf_{\substack{f \in \mathcal{B}\\
 \|f\|_n \geq C_\varepsilon d_{n,t}}} P_f(\phi_{n,t} =1 | \boldx) \geq 1-\varepsilon,
 \end{equation*}
 where $c$ is a constant, $\mathcal{B} = \{f\in \cH: \|f\|_{\cH} \leq C\}$ for a constant $C$ 
 and $P_f(\cdot)$ is the  probability measure under $f$.
    \item The separation rate $d_{n, t}$ achieves its minimal value as $d_n^* := d_{n, T^*}$.  \label{thm:power:optimal}
\end{enumerate}
\end{Theorem}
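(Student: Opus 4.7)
My plan is to handle the two parts of Theorem~\ref{thm:power} in sequence. For (a) I decompose $D_{n,t}$ under the alternative into a deterministic signal piece, a Gaussian cross term, and a noise-only piece matching $D_{n,t}$ under $H_0$; lower-bounding the signal and applying Theorem~\ref{thm:consistency} to the noise yields the rejection. For (b) I read the minimum of $d_{n,t}$ directly off the monotonicity of its two summands on either side of $T^*$.

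For part (a), solving recursion~\eqref{eq:recursion_eq} in the eigenbasis $\boldK=U\Lambda U^\top$ with $\boldf_0=0$ gives $\boldf_{t}=U(I-S^t)U^\top \boldy$. Writing $\boldf_\star:=(f(x_1),\dots,f(x_n))^\top$ and $\boldy=\boldf_\star+\bepsilon$, the statistic decomposes as
\begin{equation*}
D_{n,t} = \underbrace{\tfrac{1}{n}\|(I-S^t)U^\top \boldf_\star\|_2^2}_{\mathrm{I}} + \underbrace{\tfrac{2}{n}\langle (I-S^t)U^\top\boldf_\star,\,(I-S^t)U^\top\bepsilon\rangle}_{\mathrm{II}} + \underbrace{\tfrac{1}{n}\|(I-S^t)U^\top\bepsilon\|_2^2}_{\mathrm{III}}.
\end{equation*}
Term $\mathrm{III}$ has the same law as $D_{n,t}$ under $H_0$, so Theorem~\ref{thm:consistency} gives $\mathrm{III}=\mu_{n,t}+O_p(\sigma_{n,t})$. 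Term $\mathrm{II}$ is centered Gaussian given $\boldx$ with variance $\tfrac{4\sigma^2}{n^2}\sum_i(1-S_i^t)^4\theta_{\star,i}^2\le \tfrac{4\sigma^2}{n}\,\mathrm{I}\le \tfrac{4\sigma^2}{n}\|f\|_n^2$, so $|\mathrm{II}|=O_p(\|f\|_n/\sqrt n)$. For $\mathrm{I}$, the identity $\|f\|_n^2 = \mathrm{I} + \tfrac{1}{n}\|S^t U^\top \boldf_\star\|_2^2$ reduces the problem to controlling the squared bias. I bound it via the elementary estimate $(1-x)^{2\eta_t}x\le C/\eta_t$ on $[0,1]$, which (using Assumption~\ref{assump:step_size}) yields $(S_i^t)^2\widehat{\mu}_i\le C/\eta_t$ for every $i$; together with the representer-theorem identity $\tfrac{1}{n}\sum_i\theta_{\star,i}^2/\widehat{\mu}_i=\boldw^\top\boldK\boldw\le\|f\|_\cH^2$ (for $\theta_{\star,i}=(U^\top\boldf_\star)_i$ and the representer weights $\boldw$ of $f$ at $x_1,\dots,x_n$), this gives $\tfrac{1}{n}\|S^t U^\top\boldf_\star\|_2^2\le C\|f\|_\cH^2/\eta_t = O(d_{n,t}^2)$. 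Hence $D_{n,t}-\mu_{n,t}\ge \|f\|_n^2-C/\eta_t-O_p(\|f\|_n/\sqrt n)-O_p(\sigma_{n,t})$. Choosing $C_\varepsilon$ large enough that $\|f\|_n\ge C_\varepsilon d_{n,t}$ forces each subtracted term to be a small fraction of $\|f\|_n^2$ (using $d_{n,t}^2\ge\max\{1/\eta_t,\sigma_{n,t}\}$), so $D_{n,t}-\mu_{n,t}\ge z_{1-\alpha/2}\sigma_{n,t}$ with probability at least $1-\varepsilon-e^{-c\kappa_t}$, i.e., $\phi_{n,t}=1$.

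For part (b), by Assumption~\ref{assump:step_size} $\eta_t$ is strictly increasing, so $1/\eta_t$ is strictly decreasing. By Lemma~\ref{le:mean_var}, $\sigma_{n,t}^2\asymp n^{-2}\sum_i\min\{1,\eta_t\widehat{\mu}_i\}$ is nondecreasing in $t$. The stopping rule~\eqref{eq:stop_rule} makes $T^*$ the first $t$ with $1/\eta_t<\sigma_{n,t}$, so $1/\eta_{T^*}\asymp\sigma_{n,T^*}\asymp d_{n,T^*}^2$. Thus for $t<T^*$, $d_{n,t}^2\asymp 1/\eta_t>1/\eta_{T^*}\asymp d_{n,T^*}^2$, and for $t>T^*$, $d_{n,t}^2\asymp\sigma_{n,t}\ge\sigma_{n,T^*}\asymp d_{n,T^*}^2$, which establishes that $d_n^*=d_{n,T^*}$ is the minimum.

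The main obstacle I expect is the bias bound $\tfrac{1}{n}\|S^t U^\top\boldf_\star\|_2^2\le C\|f\|_\cH^2/\eta_t$, uniformly over $\boldx$ in a high-probability event and over $f\in\cB$: it requires a careful passage between the population RKHS norm of $f$ and the coefficients $\theta_{\star,i}$ in the \emph{random} empirical eigenbasis, where the representer identity must be combined with Assumption~\ref{asmp:A1} to keep constants independent of $n$. A secondary technicality is upgrading the rejection probability to $1-e^{-c\kappa_t}$, which requires a Gaussian quadratic-form tail bound for $\mathrm{III}-\mu_{n,t}$ at scale $\sigma_{n,t}$ with effective rank $\kappa_t$; this follows from the trace identities $\tr((I-S^t)^2)\asymp\kappa_t$ implicit in Lemma~\ref{le:mean_var} together with standard Hanson-Wright-type concentration.
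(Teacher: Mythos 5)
Your decomposition of $D_{n,t}$ into signal/cross-term/noise is exactly the paper's $W_1+W_2+W_3$, and your overall plan matches the paper's proof: apply Theorem~\ref{thm:consistency} to the noise quadratic form, show the cross term is negligible in probability, and lower-bound the signal term by the squared separation $d_{n,t}^2$ after peeling off the shrinkage bias. Part (b) is treated by the paper as immediate and your monotonicity argument for it is sound. So the approach is essentially the same, but there are two slips worth flagging.

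First, the displayed ``identity'' $\|f\|_n^2 = \mathrm{I} + \tfrac{1}{n}\|S^t U^\top \boldf_\star\|_2^2$ is false: it would require $(I-S^t)^2 + (S^t)^2 = I$, i.e.\ $S^t$ to be a projection, whereas $S^t$ is a diagonal contraction with entries in $[0,1]$. What you actually want is an inequality. The paper uses $\|(I-S^t)\gamma^*\|_2^2 \ge \tfrac12\|\gamma^*\|_2^2 - \|S^t\gamma^*\|_2^2$ (from $(a-b)^2\ge\tfrac12 a^2-b^2$); equivalently you could use the triangle inequality $\sqrt{\mathrm I}\ge\|f\|_n-\sqrt{\tfrac1n\|S^tU^\top\boldf_\star\|_2^2}$. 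Either way the conclusion $\mathrm I\gtrsim d_{n,t}^2$ survives, but as written the step fails. Your bias bound $\tfrac1n\|S^tU^\top\boldf_\star\|_2^2\le C\|f\|_\cH^2/\eta_t$ is correct and is just a re-derivation of Lemma~\ref{le:bias} (cited from \cite{raskutti2014early}); note it is a deterministic bound given $\boldx$ via the exact algebra $\tfrac1n\sum_i\theta_{\star,i}^2/\widehat\mu_i=\boldw^\top\boldK\boldw\le\|f\|_\cH^2$, so the uniformity over $f\in\cB$ and over $\boldx$ that you worried about is automatic and does not invoke Assumption~\ref{asmp:A1}.

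Second, the probability bookkeeping in your final sentence conflates two sources of randomness. In the theorem the event of probability $\ge 1-e^{-c\kappa_t}$ is over the covariates $\boldx$ (it is the empirical-eigenvalue event used in Theorem~\ref{thm:consistency} to make $\tilde\kappa_t\to\infty$), while the $1-\varepsilon$ is the conditional rejection probability over $\epsilon$ given $\boldx$. You do not need a Hanson--Wright tail bound to ``upgrade'' the rejection probability to $1-e^{-c\kappa_t}$; the paper simply works on the good $\boldx$-event (defining $\mathcal E_1,\mathcal E_2,\mathcal E_3$), applies the CLT from Theorem~\ref{thm:consistency} to $W_1$, Chebyshev to $W_2$ via $\E_\epsilon W_2^2\lesssim W_3$, and intersects. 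Your version is correct in spirit but states the probabilities additively in a way that does not match the theorem's statement.
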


The general Theorem \ref{thm:power} implies the following concrete stopping rules under various kernel classes. 

\begin{Corollary}{(PDK of order $m$)}\label{cor:test:pdk}
Suppose Assumption \ref{assump:step_size} holds and $m>3/2$. Then at time $T^*$ with $\eta_{T^*} \asymp n^{4m/(4m+1)}$, for any $\varepsilon >0$, there exist constants  $C_\varepsilon$ and $N_\varepsilon$
 such that, with probability greater than $1-e^{-c_m n^{(2m-3)/(2m-1)}}-e^{-c_1n^{2/(4m+1)}}$,
 \begin{equation*}
 \inf_{n \geq N_\varepsilon}\inf_{\substack{f \in \mathcal{B}\\
 \|f\|_n \geq C_\varepsilon n^{-\frac{2m}{4m+1}}}} 
 P_f(\phi_{n,T^*} =1|\boldx) \geq 1-\varepsilon,
 \end{equation*}
 where $c_m$ is an absolute constant depending on $m$ only, $c_1$ is a constant. 
\end{Corollary}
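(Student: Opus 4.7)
The plan is to specialize Theorem~\ref{thm:power} to PDK by translating the generic ``effective dimension'' quantity $\sum_{i=1}^n \min\{1,\eta_t\widehat\mu_i\}$ and the population index $\kappa_t$ into explicit polynomial rates in $n$, and then solving the stopping rule (\ref{eq:stop_rule}) algebraically.

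First, for PDK we have $\mu_i\asymp i^{-2m}$, so a direct calculation gives $\kappa_t = \argmin\{j:\mu_j<1/\eta_t\}-1 \asymp \eta_t^{1/(2m)}$, which diverges and verifies Assumption~\ref{assump:eigenvalue}. Splitting the sum at the index $\kappa_t$ yields
\begin{equation*}
\sum_{i=1}^{n}\min\{1,\eta_t \mu_i\}
= \sum_{i\le \kappa_t}1 + \eta_t\sum_{\kappa_t<i\le n}\mu_i
\asymp \eta_t^{1/(2m)} + \eta_t\cdot \kappa_t^{1-2m}
\asymp \eta_t^{1/(2m)},
\end{equation*}
since $m>3/2>1/2$ makes the tail sum convergent. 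To replace $\mu_i$ with the empirical $\widehat\mu_i$ inside this expression, I will invoke a standard empirical-eigenvalue concentration inequality for kernel operators (of Koltchinskii–Giné / Braun type) ensuring $\widehat\mu_i\asymp i^{-2m}$ in the relevant range; for PDK of order $m>3/2$ this event has probability at least $1-e^{-c_m n^{(2m-3)/(2m-1)}}$, which is precisely the first exceptional probability in the corollary's bound.

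On this event, the defining inequality of $T^*$ becomes $\eta_t^{-1}<\sigma\, n^{-1}\eta_t^{1/(4m)}$, i.e.\ $\eta_t \gtrsim n^{4m/(4m+1)}$, so by minimality $\eta_{T^*}\asymp n^{4m/(4m+1)}$. Substituting into Lemma~\ref{le:mean_var} and into the separation rate,
\begin{equation*}
d_{n,T^*}^2 \;\asymp\; \frac{1}{\eta_{T^*}} + \frac{1}{n}\eta_{T^*}^{1/(4m)}
\;\asymp\; n^{-4m/(4m+1)},
\end{equation*}
so $d_{n,T^*}\asymp n^{-2m/(4m+1)}$, which is exactly the rate in the corollary's statement. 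Moreover $\kappa_{T^*}\asymp \eta_{T^*}^{1/(2m)}\asymp n^{2/(4m+1)}$, so the generic probability $1-e^{-c\kappa_t}$ from Theorem~\ref{thm:power} specializes to $1-e^{-c_1 n^{2/(4m+1)}}$. Combining the eigenvalue-concentration event with the power event via a union bound produces the two-term probability in the corollary.

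The step I expect to carry the real technical weight is the eigenvalue-concentration argument that justifies replacing $\widehat\mu_i$ by $i^{-2m}$ inside $\min\{1,\eta_t\widehat\mu_i\}$ uniformly up to index $\kappa_{T^*}\asymp n^{2/(4m+1)}$: the threshold $m>3/2$ and the peculiar exponent $(2m-3)/(2m-1)$ both come from balancing the Bernstein-type deviation bound on the integral operator against the decay of $\mu_i$, and one must verify that the relevant $\widehat\mu_i$ are indeed of order $\mu_i$ rather than being dominated by the perturbation. After that, the remainder is bookkeeping: plug $\eta_{T^*}$ into Theorem~\ref{thm:power}(a) with $t_\varepsilon$ absorbed by $N_\varepsilon$, check that $\boldf_{T^*}$ still satisfies the required $\cH$-ball constraint, and assemble the failure probabilities.
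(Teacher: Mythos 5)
Your proposal follows essentially the same route as the paper: solve the stopping rule under $\mu_i\asymp i^{-2m}$ by balancing $1/\eta_t$ against $n^{-1}\sqrt{\eta_t^{1/(2m)}}$ to obtain $\eta_{T^*}\asymp n^{4m/(4m+1)}$, $d_{n,T^*}\asymp n^{-2m/(4m+1)}$, $\kappa_{T^*}\asymp n^{2/(4m+1)}$, and then specialize Theorem~\ref{thm:power}(a), with the two exceptional-probability terms coming from empirical-eigenvalue concentration and from the $1-e^{-c\kappa_t}$ factor in the theorem. The algebra is all correct and matches the paper's computation.

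Two small remarks on where your outline is thinner than the paper's argument. First, replacing $\mu_i$ by $\widehat\mu_i$ in $\sum_i\min\{1,\eta_t\widehat\mu_i\}$ is not fully covered by a bound of the form ``$\widehat\mu_i\asymp i^{-2m}$ for $i$ up to $\kappa_{T^*}$'': that handles the head, $\sum_{i\le \kappa_{T^*}}\min\{1,\eta_{T^*}\widehat\mu_i\}\asymp\kappa_{T^*}$, but the tail $\eta_{T^*}\sum_{i>\kappa_{T^*}}\widehat\mu_i$ needs a separate estimate on the sum of small empirical eigenvalues. The paper handles this with Lemma~\ref{le:tail:eigenvalues}, whose failure probability $O(e^{-\kappa_{T^*}})$ gets absorbed into your second term $e^{-c_1 n^{2/(4m+1)}}$; you should invoke that lemma (or its equivalent) explicitly. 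Second, the exponent $(2m-3)/(2m-1)$ is the probability exponent in Lemma~\ref{le:main eigenvalues}(a) evaluated at the largest index $i=n^{1/(2m)}$ in the lemma's range — evaluating at $i=\kappa_{T^*}\asymp n^{2/(4m+1)}$ actually gives a better exponent — so the corollary's bound is the slightly conservative one obtained by using the lemma's full range; your description of it as coming from balancing at the stopping index is slightly off, though it does not affect correctness.
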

Note that the minimal separation rate $n^{-\frac{2m}{4m+1}}$ is minimax optimal according to (\cite{ingster1993asymptotically}). 
Thus, $D_{n,T^*}$ is optimal when $\eta_{T^*} \asymp n^{4m/(4m+1)}$. Note that $\eta_{T^*} = \sum_{t=0}^{T^*-1}\alpha_t$, $T^* \asymp n^{4m/(4m+1)}$ when constant step sizes are chosen.

\begin{Corollary}{(EDK of order $p$)}\label{cor:test:edk}
Suppose Assumption \ref{assump:step_size} holds and $p\geq 1$. Then at time $T^*$ with $\eta_{T^*} \asymp n(\log n)^{-1/(2p)}$, for any $\varepsilon >0$, there exist constants  $C_\varepsilon$ and $N_\varepsilon$
 such that, with probability greater than 
$1-e^{-c_{\beta,p}n(\log n)^{-2/p}}-e^{-c_1(\log n)^{1/p}}$,
 \begin{equation*}
 \inf_{n \geq N_\varepsilon}\inf_{\substack{f \in \mathcal{B}\\
 \|f\|_n \geq C_\varepsilon n^{-\frac{1}{2}}(\log n)^{\frac{1}{4p}}}} 
 P_f(\phi_{n,T^*} =1|\boldx) \geq 1-\varepsilon,
 \end{equation*}
 where $c_{\beta,p}$ is an absolute constant depending on $\beta, p$. 
\end{Corollary}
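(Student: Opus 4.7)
The plan is to specialize Theorem~\ref{thm:power} to the EDK spectrum $\mu_j\asymp\exp(-\beta j^p)$, carrying out in closed form the two ingredients that Theorem~\ref{thm:power} leaves implicit: the cutoff index $\kappa_t$ and the bias--standard-deviation balance defining $T^*$. First I would locate the cutoff. From $\kappa_t=\argmin\{j:\mu_j<1/\eta_t\}-1$, inverting $\exp(-\beta j^p)<1/\eta_t$ gives $\kappa_t\asymp(\beta^{-1}\log\eta_t)^{1/p}$, which diverges with $t$, confirming Assumption~\ref{assump:eigenvalue}. I would then control $\sum_{i=1}^n\min\{1,\eta_t\widehat\mu_i\}$ by its population version $\sum_{i=1}^\infty\min\{1,\eta_t\mu_i\}$, which splits into $\kappa_t$ saturated terms plus a tail $\eta_t\sum_{i>\kappa_t}\exp(-\beta i^p)$. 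For $p\ge 1$ the tail is dominated up to constants by its leading term, which is at most $1$ by the very definition of $\kappa_t$, so the sum is $\asymp\kappa_t\asymp(\log\eta_t)^{1/p}$.

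Second, I would solve the stopping equation (\ref{eq:stop_rule}). Setting $1/\eta_t\asymp n^{-1}(\log\eta_t)^{1/(2p)}$ and taking logarithms yields $\log\eta_t\sim\log n$ to leading order, hence
$$
\eta_{T^*}\asymp n(\log n)^{-1/(2p)},\qquad \kappa_{T^*}\asymp(\log n)^{1/p}.
$$
Plugging this into $d_{n,t}^2=1/\eta_t+\sigma_{n,t}$ produces $d_{n,T^*}^2\asymp n^{-1}(\log n)^{1/(2p)}$, so the separation rate is exactly $n^{-1/2}(\log n)^{1/(4p)}$ as claimed. Applying Theorem~\ref{thm:power}(a) at $t=T^*$ then gives the stated lower bound on $P_f(\phi_{n,T^*}=1\mid\boldx)$, and the generic probability factor $1-e^{-c\kappa_t}$ from that theorem evaluates, at $t=T^*$, to $1-\exp(-c_1(\log n)^{1/p})$, matching the second exponential term in the corollary.

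Third, the additional factor $\exp(-c_{\beta,p}n(\log n)^{-2/p})$ should come from the concentration step that justifies replacing $\widehat\mu_i$ by $\mu_i$ in the variance proxy uniformly over $i\le\kappa_{T^*}$. The same mechanism produces the first exponential in the PDK Corollary~\ref{cor:test:pdk}, and the EDK version follows by feeding $\mu_{\kappa_{T^*}}\asymp n^{-1}(\log n)^{1/(2p)}$ into the Bernstein-type eigenvalue concentration inequality available under Assumption~\ref{asmp:A1}; the rate $n(\log n)^{-2/p}$ is precisely what one obtains by combining the sample size $n$ with the squared spectral gap at the cutoff.

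The hardest step will be the probabilistic one: while the algebra establishing $\eta_{T^*}\asymp n(\log n)^{-1/(2p)}$ and the resulting separation rate is direct, pinning down the quantitative $\exp(-c_{\beta,p}n(\log n)^{-2/p})$ tail requires the empirical-to-population transfer for eigenvalues to hold sharply enough at scale $\kappa_{T^*}\asymp(\log n)^{1/p}$. Once this concentration is imported from the proof of Theorem~\ref{thm:power} with the EDK-specific inputs above, the deterministic computations deliver the corollary with no further work.
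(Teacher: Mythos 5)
Your proposal is correct and follows essentially the same route as the paper: the paper proves the PDK corollary by solving the stopping equation $\tfrac{1}{\eta_{T^*}}\asymp\tfrac{1}{n}\sqrt{\sum_i\min\{1,\eta_{T^*}\widehat\mu_i\}}$, invoking the tail-sum bound (Lemma~\ref{le:tail:eigenvalues}) and eigenvalue concentration (Lemma~\ref{le:main eigenvalues}) to reduce to the population cutoff $\kappa_{T^*}$, then remarks that the EDK case is ``achieved similarly''; your computation of $\kappa_{T^*}\asymp(\log n)^{1/p}$, $\eta_{T^*}\asymp n(\log n)^{-1/(2p)}$, the separation rate $n^{-1/2}(\log n)^{1/(4p)}$, and the identification of the two exponential probability terms (from Lemma~\ref{le:main eigenvalues}(b) at $i\asymp(\log n)^{1/p}$ and from the generic $1-e^{-c\kappa_t}$ in Theorem~\ref{thm:power}) is exactly that ``similar'' argument carried out explicitly.
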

Note that the minimal separation rate $n^{-1/2}(\log n)^{1/(4p)}$ is proven to be minimax optimal in Corollary 1 of \cite{wei2017local}. Hence, $D_{n,T^*}$ is optimal at the total step size $\eta_{T^*}$ $\asymp$ $n (\log n)^{-1/(2p)}$.  When the step sizes are chosen as constants, then the corresponding $T^* \asymp n (\log n)^{-1/(2p)}$. 

\vspace{-5pt}
\subsection{Sharpness of the stopping rule}\label{sec:sharp:test}
 Theorem \ref{thm:power} shows that optimal testing can be achieved when $t= T^*$. In the specific examples of PDK and EDK, Theorem \ref{thm:power:sharp} further shows that when $t\ll T^*$ or $t\gg T^*$, there exists a local alternative $f$ that is not detectable by $D_{n,t}$ even when it is separated from zero by $d_n^*$. In this case, the asymptotic testing power is actually smaller than $\alpha$. Hence, we claim that $T^*$ is {\it sharp} in the sense that testing optimality is obtained if and only if the total step size achieves the order of $\eta_{T^*}$. Given a sequence of step size $\{\alpha_t\}_{t=0}^\infty$ satisfying Assumption \ref{assump:step_size}, we have the following results.

\begin{Theorem}\label{thm:power:sharp}
 Suppose Assumption \ref{assump:step_size} holds, and $t\ll T^*$ or $t\gg T^*$. There exists a positive constant $C_1$ such that, with probability approaching 1,  
    \begin{equation*}
    \limsup_{n\to \infty}\inf_{\substack{f \in \mathcal{B}\\
 \|f\|_n \geq C_1 d^*_{n}}} P_f(\phi_{n,t} =1 | \boldx) \leq \alpha .
    \end{equation*} %\label{thm:power:sharp}
\end{Theorem}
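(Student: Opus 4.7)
The plan is first to compute the law of $D_{n,t}-\mu_{n,t}$ under a local alternative and reduce the claim to a smallness condition on a non-centrality term, and then to construct explicit adversarial alternatives in each of the two regimes. Using $\boldK=U\Lambda U^\top$ and the closed form $\boldf_t=U(I-S^t)U^\top\boldy$, set $\bmu=U^\top\boldf^*/\sqrt n$ with $\boldf^*=(f(x_1),\dots,f(x_n))^\top$ and $W=U^\top\bepsilon/\sqrt n\sim N(0,\sigma^2 n^{-1} I_n)$. A direct expansion yields
\begin{equation*}
D_{n,t}-\mu_{n,t}=\|(I-S^t)\bmu\|^2+2\bmu^\top(I-S^t)^2 W+\Bigl[\|(I-S^t)W\|^2-\tfrac{\sigma^2}{n}\tr((I-S^t)^2)\Bigr].
\end{equation*}
The bracketed term is the null fluctuation and is of order $\sigma_{n,t}$ by the CLT used for Theorem \ref{thm:consistency}; the Gaussian cross term has standard deviation at most $2\sigma n^{-1/2}\|(I-S^t)\bmu\|$. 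Consequently $\phi_{n,t}$ has asymptotic power no larger than $\alpha$ whenever $\|(I-S^t)\bmu\|^2=o(\sigma_{n,t})$, which I plan to establish by an adversarial choice of local alternative in each regime.

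For the over-fit regime $t\gg T^*$, I would take $\bmu=C_1 d_n^* e_1$, realized by a scaled first population eigenfunction $f=c\phi_1$; since $\mu_1$ is a fixed positive constant, $\|f\|_{\mathcal{H}}^2\asymp(d_n^*)^2/\mu_1=o(1)$, so $f\in\mathcal{B}$ for large $n$. Because $\|I-S^t\|\le 1$, the non-centrality is at most $\|\bmu\|^2=C_1^2(d_n^*)^2$. The definition \eqref{eq:stop_rule} gives $\sigma_{n,T^*}\asymp(d_n^*)^2$ and Lemma \ref{le:mean_var} implies that $\sigma_{n,t}$ is non-decreasing in $t$; hence $(d_n^*)^2/\sigma_{n,t}\to 0$ when $t\gg T^*$, and the smallness condition is immediate.

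For the under-fit regime $t\ll T^*$ one instead has $1/\eta_t\gg(d_n^*)^2$ and $\sigma_{n,t}\le\sigma_{n,T^*}\asymp(d_n^*)^2$, so the adversary must be placed in a direction that $t$ iterations have not had time to learn. The plan is to choose $j^*\asymp\kappa_{T^*}$ so that $\mu_{j^*}\asymp(d_n^*)^2$ and set $\bmu=C_1 d_n^* e_{j^*}$, realized (up to low-order error) by $f\propto\phi_{j^*}$. Then $\|f\|_n\asymp d_n^*$ and $\|f\|_{\mathcal{H}}^2=(C_1 d_n^*)^2/\hat\mu_{j^*}=O(1)$. Since $\eta_t\hat\mu_{j^*}\ll\eta_{T^*}\hat\mu_{j^*}\asymp 1$, the $(j^*,j^*)$-entry of $I-S^t$ is of order $\eta_t\hat\mu_{j^*}$, giving
\begin{equation*}
\|(I-S^t)\bmu\|^2\asymp(\eta_t\hat\mu_{j^*})^2(d_n^*)^2.
\end{equation*}
Comparing with $\sigma_{n,t}$ via \eqref{eq:stop_rule} and Lemma \ref{le:mean_var}, verified kernel-by-kernel through Corollaries \ref{cor:test:pdk} and \ref{cor:test:edk}, will show the right-hand side is $o(\sigma_{n,t})$ whenever $\eta_t\ll\eta_{T^*}$.

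The hardest step will be the under-fit case: one must satisfy the three constraints $\|f\|_n\asymp d_n^*$, $\|f\|_{\mathcal{H}}=O(1)$, and $\|(I-S^t)\bmu\|^2=o(\sigma_{n,t})$ simultaneously and uniformly over $t\ll T^*$. The critical idea is to calibrate the index $j^*$ against $T^*$ rather than the current $t$, so that a single worst-case alternative defeats every premature iteration; this calibration is kernel-specific and rests on the bias/standard-deviation equality $1/\eta_{T^*}\asymp\sigma_{n,T^*}$ built into \eqref{eq:stop_rule}. The high-probability statement over $\boldx$ in the theorem follows by standard concentration of empirical eigenvalues, $\hat\mu_i\asymp\mu_i$, of the type used in \cite{raskutti2014early}.
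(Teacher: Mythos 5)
Your overall plan coincides with the paper's: both decompose $D_{n,t}-\mu_{n,t}$ as the null chi-square fluctuation $W_1-\mu_{n,t}$, a Gaussian cross term $W_2$, and the non-centrality $W_3=\|(I-S^t)\bmu\|^2$, and reduce the claim to exhibiting an $f\in\mathcal{B}$ with $\|f\|_n\gtrsim d_n^*$ for which $W_3=o(\sigma_{n,t})$. For $t\gg T^*$ your construction---a rank-one spike on the leading coordinate of size $\asymp d_n^*$---is essentially (\ref{eq:const:test_b}); the bound $W_3\leq\|\bmu\|^2\asymp 1/\eta_{T^*}$ and the comparison $1/\eta_{T^*}\ll\sigma_{n,t}$ are exactly the paper's steps. (Both you and the paper simply assert the last inequality; note that ``$\sigma_{n,t}$ non-decreasing'' is not by itself sufficient, since you need $\tilde\kappa_t/\tilde\kappa_{T^*}\to\infty$, which is delicate for EDK because $\kappa_t\asymp(\log\eta_t)^{1/p}$ grows only logarithmically.)

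For $t\ll T^*$ your route is genuinely different. The paper constructs a \emph{$t$-dependent}, spread-out alternative supported on roughly $\kappa_t$ coordinates $\nu=g\kappa_t+1,\dots,(g+1)\kappa_t-1$, where $g=g(t)$ is tuned so that $(g+1)\kappa_t\leq\kappa_{T^*}$ and $n\eta_t^2\mu_{g\kappa_t}^3\ll\kappa_t^{1/2}$. You instead place a single spike at $j^*\asymp\kappa_{T^*}$ with $\mu_{j^*}\asymp(d_n^*)^2$, so that one worst-case alternative, calibrated to $T^*$ rather than $t$, defeats every premature iteration at once. Tracking exponents this does go through: $(1-S^t_{j^*j^*})\asymp\eta_t\hat\mu_{j^*}\asymp\eta_t(d_n^*)^2$, giving $W_3\asymp\eta_t^2(d_n^*)^6$, and one can check kernel-by-kernel that $\eta_t\ll\eta_{T^*}$ forces $\eta_t^2(d_n^*)^6\ll\sigma_{n,t}\asymp\sqrt{\kappa_t}/n$. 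This is cleaner and more uniform than the paper's $g$-calibration; it is a legitimate simplification of the argument.

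The one genuine gap is how you realize the spike. You write $f\propto\phi_{j^*}$, a \emph{population} eigenfunction, and wave at a ``low-order error.'' But the needed vector $\bmu=U^\top\boldf^*/\sqrt n$ lives in the \emph{empirical} eigenbasis, and for a growing index $j^*\asymp\kappa_{T^*}$ there is no cheap guarantee that $\phi_{j^*}$ projects essentially onto $e_{j^*}$: the concentration in Lemma \ref{le:main eigenvalues} is for eigen\emph{values}, whereas aligning $\phi_{j^*}$ with $U_{\cdot,j^*}$ is an eigen\emph{vector} perturbation statement, which is much harder when the spectral gap $\mu_{j^*}-\mu_{j^*+1}$ is shrinking. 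If $\bmu$ leaks onto low coordinates $j\leq\kappa_t$, where $1-S^t_{jj}\asymp 1$, then $W_3$ picks up a term of order $\bmu_j^2$ and the bound $W_3=o(\sigma_{n,t})$ fails. The paper sidesteps this entirely by taking $f(\cdot)=\sum_i K(x_i,\cdot)w_i$ with $w=U\boldalpha$, so that $f$ depends on $\boldx$ but $\bmu$ is \emph{exactly} supported on the chosen coordinates. You should adopt the same device; your choice $j^*\asymp\kappa_{T^*}$ and the rest of your calculation then carry over without change.
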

In the proof, we construct the alternative function as $\sum_{i=1}^n K(x_i, \cdot)w_i$, with $w_i$ being defined in (\ref{eq:const:test_a}) and (\ref{eq:const:test_b}) for the two cases $t\ll T^*$ and $t \gg T^*$, respectively. 

\vspace{-5pt}
\section{Sharpness of early stopping in nonparametric estimation}\label{sec:sharp:est}
In this section, we review the existing early stopping rule for estimation, and further explore its ``sharpness'' property. In the literature, \cite{raskutti2014early} and \cite{wei2017early} proposed to use the fixed point of local empirical Rademacher complexity to define the stopping rule as follows
\begin{equation}\label{eq:est:stopping_rule}
\tilde{T} := \argmin\left\{ t\in \mathbb{N} \; \big|\; \frac{1}{\eta_t} <  \frac{\sigma}{n} \sum_{i=1}^n \min\{1, \eta_t \widehat{\mu}_i\}\right\}. 
\end{equation}
Given the above stopping rule, the following theorem holds where $f^*$ represents truth. 
\begin{Theorem}[\cite{raskutti2014early}]\label{thm:rasku}
Given the stopping time $\tilde{T}$ defined by (\ref{eq:est:stopping_rule}), there are universal positive constants $(c_1, c_2)$ such that the following events hold with probability at least $1-c_1\exp(-c_2 n/\eta_{\tilde{T}})$:
\begin{enumerate}[$(a)$]
\item For all iterations $t=1,2, \cdots, \tilde{T}$: $\|f_t - f^*\|_n^2 \leq \frac{4}{e\eta_t}$. \label{thm:est:less:equal}
\item At the iteration $\tilde{T}$, $\|f_{\tilde{T}} - f^*\|_n^2 \leq 12 \frac{1}{\eta_{\tilde{T}}}$. \label{thm:est:equal}
\item For all  $t\gg \tilde{T}$,
 $$
 \E \|f_{t} - f^*\|_n^2 \geq \frac{\sigma^2}{4}\big(\frac{1}{n} \sum_{i=1}^n \min\{1,\widehat{\mu}_i\eta_t\}\big) \gg \frac{\sigma^2}{\eta_{\tilde{T}}}.
 $$ \label{thm:est:greater}
\end{enumerate}
\end{Theorem}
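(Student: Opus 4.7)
The plan is to execute the bias-variance argument of \cite{raskutti2014early}: diagonalize the empirical kernel, split each iterate into a deterministic shrinkage bias and a stochastic noise contribution, and then leverage the fact that $\widetilde T$ is defined precisely so these two pieces cross. Writing $\boldK = U\Lambda U^\top$ and setting $\widetilde{\boldf}_t = U^\top \boldf_t$, $\widetilde{\boldf}^* = U^\top \boldf^*$, $\widetilde{\epsilon} = U^\top(\boldy - \boldf^*)$, the recursion (\ref{eq:recursion_eq}) becomes coordinatewise, and (with $\boldf_0 = 0$ and the telescoping identity $\sum_{\tau<t}\alpha_\tau \widehat{\mu}_i \prod_{s>\tau}(1-\alpha_s\widehat{\mu}_i) = 1 - S^t_{ii}$, verified by induction on $t$) unwinds into
\begin{equation*}
\widetilde{\boldf}_t - \widetilde{\boldf}^* = -S^t\,\widetilde{\boldf}^* + (I_n - S^t)\,\widetilde{\epsilon}.
\end{equation*}
By orthogonal invariance, $\|\boldf_t - \boldf^*\|_n^2$ then splits into a deterministic bias $B_t := n^{-1}\|S^t\widetilde{\boldf}^*\|_2^2$, a Gaussian quadratic form $V_t := n^{-1}\|(I_n - S^t)\widetilde{\epsilon}\|_2^2$, and a mean-zero cross term $C_t$.

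For parts (a) and (b), I would first bound the bias via the pointwise inequality $(S^t_{ii})^2 \widehat{\mu}_i \leq (2e\eta_t)^{-1}$ (from $(1-x)^k \leq e^{-kx}$ combined with $\max_{x\geq 0} x e^{-2x} = 1/(2e)$). Plugging this into the representer identity $\|f^*\|_\cH^2 = \sum_i \widehat{\mu}_i (U^\top w^*)_i^2$ under the standing normalization $\|f^*\|_\cH \leq 1$ yields $B_t \leq (2e\eta_t)^{-1}$. Second, $V_t$ is a Gaussian quadratic form with expectation $\asymp \sigma^2 n^{-1}\sum_i \min\{1, \eta_t \widehat{\mu}_i\}$; Hanson--Wright localizes it to that scale on an event of probability at least $1 - c_1\exp(-c_2 n/\eta_t)$. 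By the defining inequality of $\widetilde T$ in (\ref{eq:est:stopping_rule}), $V_t \lesssim \eta_t^{-1}$ for every $t \leq \widetilde T$, so the bias scale dominates, and the cross term $C_t$ is absorbed via Cauchy--Schwarz against the geometric mean $\sqrt{B_t V_t}$. Tracking the constants yields $4/(e\eta_t)$ in (a), and evaluating at the crossover $t = \widetilde T$ (where the stopping inequality becomes tight) produces the slightly larger universal constant $12$ in (b).

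For part (c), the roles reverse. When $t \gg \widetilde T$, the variance expectation $\E V_t \asymp \sigma^2 n^{-1}\sum_i \min\{1, \eta_t\widehat{\mu}_i\}$ strictly exceeds $\sigma^2/\eta_{\widetilde T}$ by the definition of $\widetilde T$ and the monotonicity of $\eta_t \mapsto n^{-1}\sum_i \min\{1,\eta_t\widehat{\mu}_i\}$, while the bias remains at most $(2e\eta_t)^{-1}$. Since $\E C_t = 0$ and $B_t \geq 0$, one has $\E\|\boldf_t - \boldf^*\|_n^2 \geq \E V_t \geq (\sigma^2/4)\, n^{-1}\sum_i \min\{1,\widehat{\mu}_i\eta_t\}$, the last inequality using that $1 - S^t_{ii} \geq 1/2$ on the divergent active block $i \leq \kappa_t$ (Assumption \ref{assump:eigenvalue}).

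The main obstacle is a uniform-in-$t$ execution of the variance concentration in the previous step: the Hanson--Wright tail must be unioned (or peeled monotonically in $\eta_t$) across all iterations $t \leq \widetilde T$ while preserving the exponential rate $\exp(-c_2 n/\eta_{\widetilde T})$, rather than incurring a logarithmic slack. A secondary technicality is reconciling the empirical diagonalization used for the bias bound with the population norm $\|f^*\|_\cH$: one must route through the representer theorem and verify that components of $f^*$ orthogonal to $\mathrm{span}\{K(x_i,\cdot)\}_{i=1}^n$ contribute zero to both $\boldf_t$ and $\boldf^*$ at the sample, so that neither $B_t$ nor $V_t$ is inflated by out-of-span mass.
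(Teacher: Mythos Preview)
The paper does not supply its own proof of this theorem; it is quoted from \cite{raskutti2014early}, and the only argument the paper offers is the brief remark at the end of Section~\ref{sec:sharp:est} that parts~(\ref{thm:est:less:equal}) and~(\ref{thm:est:equal}) follow from the bias--variance split $\|f_t - f^*\|_n^2 \leq 2\|f_t - \E_\epsilon f_t\|_n^2 + 2\|\E_\epsilon f_t - f^*\|_n^2$, with the bias handled by Lemma~\ref{le:bias} and the variance by Lemma~\ref{le:variance} (Hanson--Wright). Your proposal is exactly this decomposition carried out in the diagonalized coordinates, matching both the paper's sketch and the original argument in \cite{raskutti2014early}; your flagging of the uniform-in-$t$ concentration as the main technical point is also accurate.
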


To show the sharpness of $\widetilde T$, it suffices to examine the upper bound in Theorem \ref{thm:rasku} (\ref{thm:est:less:equal}). In particular, we prove a complementary lower bound result. Specifically, Theorem \ref{thm:est:sharp} implies that once $t\ll \tilde{T}$, the rate optimality will break down for at least one true $f\in \mathcal{B}$ with high probability. 
Denote the stopping time $\tilde{T}$ satisfying 

\begin{equation*}\label{eq:est:opt}
\eta_{\tilde{T}} \asymp
  \begin{cases}
    n^{2m/(2m+1)}       & \quad K \,\text{is PDK of order $m$,}\\
    n /(\log n)^{1/p}  & \quad K \,\text{is EDK of order $p$.}\\
  \end{cases}
\end{equation*}

\begin{Theorem}\label{thm:est:sharp}
\begin{enumerate}[$(a)$]
\item (PDK of order $m$)\label{thm:est:sharp:a} Suppose Assumption \ref{assump:step_size} holds and $m> \frac{3}{2}$. For all $t\ll \tilde{T}$, with probability approaching 1, %at least $1-c_1\kappa_t\exp (-c_2n\delta_n)$ , 
it holds that 
$$
\sup_{f\in \mathcal{B}} \|f_{t}-f^*\|_n^2 \geq \frac{c_m\sigma^2}{\eta_t}\gg \frac{1}{\eta_{\tilde{T}}}.
$$
\item (EDK of order $p$) \label{thm:est:sharp:b} Suppose Assumption \ref{assump:step_size} holds and $p\geq 1$. For all $t\ll \tilde{T}$, with probability approaching 1,
$$
\sup_{f\in \mathcal{B}} \|f_{t}-f^*\|_n^2 \gg \frac{1}{\eta_{\tilde{T}}}.
$$
\end{enumerate}
\end{Theorem}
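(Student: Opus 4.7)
The plan is to construct, for each $t\ll\tilde T$, an adversarial truth $f^*\in\mathcal{B}$ whose energy is placed along eigendirections of the empirical kernel that the iterate $f_t$ has not yet ``reached'', forcing the bias to dominate. First I would diagonalize $\boldK=U\Lambda U^\top$; setting $\tilde{\boldf}^*=U^\top\boldf^*$ and $\tilde{\bepsilon}=U^\top\bepsilon$ and unrolling \eqref{eq:recursion_eq} from $\boldf_0=\zero$ gives the coordinate-wise closed form $\tilde f_{t,i}=(1-S^t_{ii})\tilde y_i$, and hence
\[
\|f_t-f^*\|_n^2=\frac{1}{n}\sum_{i=1}^n\bigl[-S^t_{ii}\tilde f^*_i+(1-S^t_{ii})\tilde\epsilon_i\bigr]^2.
\]
Because $\mathcal{B}$ is symmetric under $f^*\mapsto-f^*$, the cross-term between signal and noise can always be arranged nonnegative, so it suffices to produce $f^*\in\mathcal{B}$ whose squared bias $B_t^2(f^*):=n^{-1}\sum_i(S^t_{ii})^2(\tilde f^*_i)^2$ dominates $1/\eta_{\tilde T}$.

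Next I would reduce the bias maximization to a one-parameter problem. Writing $f^*=n^{-1/2}\sum_{i=1}^n w_iK(x_i,\cdot)$ and working in the SVD basis gives $\tilde f^*_i=\sqrt n\,\widehat\mu_i(U^\top\boldw)_i$ and $\|f^*\|_{\mathcal{H}}^2=\sum_i\widehat\mu_i(U^\top\boldw)_i^2$. After the change of variables $h_i=\sqrt{\widehat\mu_i}\,(U^\top\boldw)_i$, the RKHS constraint becomes $\sum_i h_i^2\le C^2$ and the bias becomes $B_t^2=\sum_i(S^t_{ii})^2\widehat\mu_i h_i^2$, so
\[
\sup_{f^*\in\mathcal{B}}B_t^2=C^2\max_i(S^t_{ii})^2\widehat\mu_i,
\]
attained by concentrating $h$ on a single index $i_t$. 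I would choose $i_t$ so that $\eta_t\widehat\mu_{i_t}=\Theta(1)$; the elementary bound $\log(1-\alpha_\tau\widehat\mu_{i_t})\ge-c\,\alpha_\tau\widehat\mu_{i_t}$ combined with Assumption \ref{assump:step_size} then forces $S^t_{i_ti_t}\ge e^{-c}$, giving $\sup_{f^*\in\mathcal{B}}B_t^2\gtrsim\widehat\mu_{i_t}$.

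For part (a), I would invoke standard concentration of empirical kernel eigenvalues (of Koltchinskii or Braun type, producing the $\exp(-c\,n^{(2m-3)/(2m-1)})$-type event already exploited in Theorem \ref{thm:rasku}) to conclude $\widehat\mu_i\asymp\mu_i\asymp i^{-2m}$ for $i\le\kappa_t+1$; the bounded neighbor ratios of a polynomial spectrum then guarantee some $i_t$ with $\widehat\mu_{i_t}\asymp 1/\eta_t$, so $\sup_{f^*\in\mathcal{B}}B_t^2\gtrsim 1/\eta_t\gg 1/\eta_{\tilde T}$, which matches the claimed $c_m\sigma^2/\eta_t$ bound after absorbing $\sigma$ and $C$ into $c_m$. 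For part (b), I would take $i_t=\kappa_t+1$, at which $\widehat\mu_{i_t}<1/\eta_t$ and hence $S^t_{i_ti_t}\ge e^{-1}$; the expansion $\beta(\kappa_t+1)^p=\log\eta_t+p\beta\kappa_t^{p-1}+O(\kappa_t^{p-2})$, together with the concentration $\widehat\mu_{\kappa_t+1}\asymp\mu_{\kappa_t+1}$, then shows $\widehat\mu_{\kappa_t+1}\gg 1/\eta_{\tilde T}$ whenever $\eta_t/\eta_{\tilde T}\to 0$, closing the argument.

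The main obstacle will be part (b): for $p>1$ the spectrum is so sparse that neighboring eigenvalues can differ by super-polynomial factors, and so $\widehat\mu_{\kappa_t+1}$ can be much smaller than $1/\eta_t$. Verifying that it nevertheless dominates $1/\eta_{\tilde T}=(\log n)^{1/p}/n$ requires carefully tracking how $\kappa_t$ shrinks relative to $\kappa_{\tilde T}$ as a function of $\eta_t/\eta_{\tilde T}$, and ensuring that the empirical-eigenvalue concentration survives at the adaptively chosen index $i_t$. A secondary but routine issue is the noise-only term $n^{-1}\sum_i(1-S^t_{ii})^2\tilde\epsilon_i^2$: the symmetry argument for the cross-term only handles signs, so one must check via standard $\chi^2$ tail bounds that atypical noise realizations cannot erase the bias, but since this term is nonnegative in expectation it ultimately only helps.
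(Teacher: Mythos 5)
Your proposal is correct, and it reaches the same endpoint as the paper through a somewhat cleaner route. Two genuine differences are worth noting. First, the paper decomposes $\|f_t-f^*\|_n^2$ into squared bias $W_1$, noise $W_2$, and a cross-term $W_3$ (see the paper's equation~(\ref{proof:est:sharp:decompose})), and then spends effort bounding $W_2=O_P(\mu_{n,t})$ via a Hanson--Wright argument (Lemma~\ref{le:variance}) and $W_3$ via Cauchy--Schwarz; you avoid this entirely by observing that $B_t^2(f^*)=B_t^2(-f^*)$, so the bias maximizer $f^*_0$ and its negation lie in $\mathcal{B}$ and one of them makes the cross-term nonnegative, while the pure-noise quadratic form is already nonnegative pathwise. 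This dispenses with the noise analysis and yields $\sup_{f\in\mathcal{B}}\|f_t-f^*\|_n^2\ge\sup_{f\in\mathcal{B}}B_t^2(f)$ deterministically, which is the more elegant reduction. Second, you use a single-spike adversary for both parts: after the change of variables $h_i=\sqrt{\widehat\mu_i}(U^\top\boldw)_i$, the sup of the bias over the ball is exactly $C^2\max_i(S^t_{ii})^2\widehat\mu_i$, and you concentrate on the index $i_t$ where $\eta_t\widehat\mu_{i_t}$ is of constant order (the lower bound $S^t_{i_ti_t}\gtrsim1$ follows directly from Lemma~\ref{le:shrink}, $1-S^t_{jj}\le\min\{1,\eta_t\widehat\mu_j\}$, without needing your log-product estimate). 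The paper instead spreads the PDK construction over the indices $\breve\kappa_t+1,\dots,3\breve\kappa_t/2$ (equation~(\ref{eq:const:est})), which buys some robustness to fluctuations of individual empirical eigenvalues; but once Lemma~\ref{le:main eigenvalues} is invoked to give per-index concentration $\widehat\mu_i\asymp\mu_i$, the single-spike version suffices and the two constructions become essentially interchangeable (for EDK the paper also uses a single spike at $\breve\kappa_t+1$, so there the match is exact). One caveat on your EDK worry: the sparsity of the spectrum for $p>1$ is not the real threat, because part~(b) only asks for domination of $1/\eta_{\tilde T}$, not $1/\eta_t$, and that follows (as the paper argues) from $\breve\kappa_t+1\ll\kappa_{\tilde T}$ together with exponential eigendecay; the delicate step, shared by both your argument and the paper's, is verifying that $\eta_t\ll\eta_{\tilde T}$ in fact forces the index gap $\kappa_{\tilde T}-(\breve\kappa_t+1)\to\infty$, which is what ultimately delivers the strict $\gg$.
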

 Combining with Theorem \ref{thm:rasku}, we claim that $\tilde{T}$ is a ``sharp'' stopping time for estimation. 

At last, we comment briefly that the stopping rule for estimation and Theorem \ref{thm:rasku} (\ref{thm:est:less:equal}), (\ref{thm:est:equal}) can also be obtained in our framework as a by-product. Intuitively, the stopping time $\tilde{T}$ in (\ref{eq:est:stopping_rule}) is achieved by the classical bias-variance tradeoff. Note that $\|f_{t} - f^*\|_n^2$ has a trivial upper bound 
$$
\|f_{t} - f^*\|_n^2 \leq 2\underbrace{\|f_{t} - \E_{\epsilon} f_{t}\|_n^2}_{\textrm{Variance}} + 2 \underbrace{\|\E_{\epsilon} f_{t} - f^*\|_n^2}_{\textrm{Squared bias}},
$$
 where the expectation is taken with respect to $\epsilon$. The squared bias term is bounded by $\frac{1}{\eta_t}$ (see Lemma \ref{le:bias}); the variance term is bounded by the mean of $D_{n,t}$, that is,  $\|f_{t} -\E f_{t}\|_n^2 = O_P(\mu_{n,t})$ (see Lemma \ref{le:variance}), where $\mu_{n,t}= \tr((I-S^t)^2)/n \asymp \frac{1}{n} \sum_{i=1}^n \min\{1, \eta_t \widehat{\mu}_i\}$ as shown in Lemma \ref{le:mean_var}. Obviously, according to (\ref{eq:est:stopping_rule}), when $t\ll \tilde{T}$, the squared bias will dominate the variance.

\vspace{-5pt}
\section{Numerical Study}
In this section, we compare our testing method with an oracle version of stopping rule that uses knowledge of $f^*$, as well as the test based on the penalized regularization. We further conduct the simulation studies to verify our theoretical results. 

{\textbf{An oracle version of early stopping rule}} The early stopping rule defined in (\ref{eq:stop_rule}) involves the bias of the iterated estimator $f_{t}$ that can be directly calculated as 
 \begin{equation*}\label{algo:oracle}
 \|\E f_{t} - f^*\|_n^2 = \|S^tU^\top f^*\|_n^2 = \frac{1}{n}\sum_{i=1}^n (S^t_{ii})^2 [U^\top f^*(\boldx )]_i^2.
 \end{equation*} 
 And the standard derivation of $D_{n,t}$ is
  $\sigma_{n,t} = \frac{1}{n} \sqrt{2\tr\big((I-S^t)^4\big)}.$ An ``oracle'' method is to base its stopping time on the exact in-sample bias of $f_t$ and the standard derivation of $D_{n,t}$, which is defined as follows:
\begin{equation}\label{eq:exact_stop_rule}
T^\dag:= \argmin\left\{ t\in \mathbb{N} \; \big|\; \frac{1}{n}\sum_{i=1}^n (S^t_{ii})^2 [U^\top f^*(\boldx )]_i^2 <  \frac{1}{n} \sqrt{2\tr\big((I-S^t)^4\big)}\right\}. 
\end{equation}
Our oracle method represents an ideal case that the true function $f^*$ is known.

{\textbf{Algorithm based on the early stopping rule $(\ref{eq:stop_rule})$}}
In the early stopping rule defined in $(\ref{eq:stop_rule})$, the bias term is bounded by the order of $\frac{1}{\eta_t}$. To implement the stopping rule in $(\ref{eq:stop_rule})$ practically, 
we propose a boostrap method to approximate the bias term. Specifically, we calculate a sequence of $\{f_t^{(b)}\}_{b=1}^B$ based on the pair boostrapped data $\{x_i^{(b)}, y_i^{(b)}\}_{i=1}^n$, and use $\|S^tU^\top f_{tB}\|_n^2$ to approximate the bias term, where $f_{tB} = \frac{\sum_{b=1}^B f_{tb}}{B}$, $B$ is a positive integer. On the other hand, the standard derivation term $\frac{1}{n} \sqrt{2\tr\big((I-S^t)^4\big)}$ involves calculating all eigenvalues of the kernel matrix. This step can be implemented by many methods on fast computation of kernel eigenvalues; see \cite{stewart2002krylov}, \cite{drineas2005nystrom} and reference therein. 

{\textbf{Penalization-based test}}
As another reference, we also conduct the penalization-based test by using the test statistic as $D_{n,\lambda} = \|\widehat{f}_{n,\lambda}\|_n^2$. 
Here $\hat f_{n,\lambda}$ is the kernel ridge regression (KRR) estimator (\cite{shawe2004kernel}) defined as
\begin{equation}\label{eq:krr}
  \hat f_{n,\lambda} := \argmin_{f \in \cH} \left\{ \frac{1}{n} \sum_{i=1}^{n} (y_i-f(x_i))^2 + \lambda \|f\|_{\cH}^2 \right\},
\end{equation}
where $\|f\|_{\cH}^2=\langle f, f\rangle_{\cH}$ 
with $\langle \cdot, \cdot  \rangle_{\cH}$ the inner product of $\mathcal{H}$. 
The penalty parameter $\lambda$ plays the same role of the total step size $\eta_t$ to avoid overfitting. \cite{liu2018nonparametric} shows that minimax optimal testing rate can be achieved by choosing the penalty parameter satisfying $\lambda \asymp \sqrt{\tr\big((\Lambda + \lambda I_n)^{-1})\Lambda\big)^4}/n$. The specific $\lambda$ varies for different kernel classes. For example, in PDK,  the optimal testing can be achieved with $\lambda^* \asymp n^{-4m/(4m+1)}$; in EDK, the corresponding $\lambda^* \asymp n^{-1}(\log n)^{1/(2p)}$. We discover an interesting connection that the inverse of these $\lambda^*$ share the same order as the stopping rules in Corollary \ref{cor:test:pdk} and Corollary \ref{cor:test:edk}, respectively. Lemma~\ref{le:connection} provides a theoretical explanation for such connection.

\begin{Lemma}\label{le:connection}
$\tr\big((\Lambda + \lambda I_n)^{-1}\Lambda \big)^4 \asymp \tr\big( I-S^t\big)^4$ holds if and only if $\lambda \asymp \frac{1}{\eta_t}$. 
\end{Lemma}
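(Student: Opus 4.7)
The plan is to diagonalize both matrices and reduce each trace to a scalar sum over the empirical eigenvalues. Since $\Lambda = \diag(\widehat\mu_1,\ldots,\widehat\mu_n)$, both $(\Lambda+\lambda I_n)^{-1}\Lambda$ and $S^t=\prod_{\tau=0}^{t-1}(I_n-\alpha_\tau\Lambda)$ are diagonal, with $i$-th entries $\widehat\mu_i/(\widehat\mu_i+\lambda)$ and $\prod_{\tau=0}^{t-1}(1-\alpha_\tau\widehat\mu_i)$ respectively. Hence
\begin{equation*}
\tr\bigl(((\Lambda+\lambda I_n)^{-1}\Lambda)^4\bigr)=\sum_{i=1}^n\Bigl(\frac{\widehat\mu_i}{\widehat\mu_i+\lambda}\Bigr)^4,\qquad \tr\bigl((I-S^t)^4\bigr)=\sum_{i=1}^n\Bigl(1-\prod_{\tau=0}^{t-1}(1-\alpha_\tau\widehat\mu_i)\Bigr)^4.
\end{equation*}

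The next step is to establish two pointwise scalar equivalences. First, $\widehat\mu_i/(\widehat\mu_i+\lambda)\asymp\min\{1,\widehat\mu_i/\lambda\}$, which is immediate by splitting into the cases $\widehat\mu_i\ge\lambda$ and $\widehat\mu_i<\lambda$. Second, $1-\prod_{\tau=0}^{t-1}(1-\alpha_\tau\widehat\mu_i)\asymp\min\{1,\eta_t\widehat\mu_i\}$. For the upper bound I would use Weierstrass's inequality (valid since $\alpha_\tau\widehat\mu_i\in[0,1]$ by Assumption \ref{assump:step_size}) to obtain $1-\prod_\tau(1-\alpha_\tau\widehat\mu_i)\le\sum_\tau\alpha_\tau\widehat\mu_i=\eta_t\widehat\mu_i$, together with the trivial bound by $1$. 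For the lower bound I would apply $1-x\le e^{-x}$ to get $\prod_\tau(1-\alpha_\tau\widehat\mu_i)\le e^{-\eta_t\widehat\mu_i}$, so $1-\prod_\tau(1-\alpha_\tau\widehat\mu_i)\ge 1-e^{-\eta_t\widehat\mu_i}\gtrsim\min\{1,\eta_t\widehat\mu_i\}$. This scalar bound is the same one already used in Lemma \ref{le:mean_var}.

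Combining the two preceding steps and taking fourth powers, both traces satisfy $\tr(((\Lambda+\lambda I_n)^{-1}\Lambda)^4)\asymp\Phi(1/\lambda)$ and $\tr((I-S^t)^4)\asymp\Phi(\eta_t)$, where $\Phi(\gamma):=\sum_{i=1}^n\min\{1,\gamma\widehat\mu_i\}^4$ is monotone nondecreasing in $\gamma$. The "if" direction is then immediate: if $\lambda\asymp 1/\eta_t$, then $\min\{1,\widehat\mu_i/\lambda\}\asymp\min\{1,\eta_t\widehat\mu_i\}$ pointwise in $i$, and summing the fourth powers gives $\Phi(1/\lambda)\asymp\Phi(\eta_t)$. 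For the "only if" direction, I would invoke the monotonicity of $\Phi$ together with the regularity of the eigenvalue decay (under PDK, $\widehat\mu_i\asymp i^{-2m}$ yields $\Phi(\gamma)\asymp\gamma^{1/(2m)}$; under EDK, $\widehat\mu_i\asymp e^{-\beta i^p}$ yields $\Phi(\gamma)$ strictly increasing through the truncation index $\kappa_t$), so that matching $\Phi(1/\lambda)\asymp\Phi(\eta_t)$ pins down $1/\lambda\asymp\eta_t$.

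The main obstacle is the scalar equivalence in the second step, namely sandwiching $1-\prod_\tau(1-\alpha_\tau\widehat\mu_i)$ by $\min\{1,\eta_t\widehat\mu_i\}$ uniformly across the regime where a single factor may be close to zero; once this is in hand the rest reduces to elementary pointwise comparisons and monotonicity of $\Phi$.
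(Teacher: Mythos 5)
Your proof is correct and takes essentially the same route as the paper: both reduce the matrix traces to sums of $\min\{1,\gamma\widehat\mu_i\}$-type terms via pointwise scalar bounds (your shrinkage-matrix sandwich is exactly Lemma \ref{le:shrink}, which the paper cites directly) and then compare $\gamma=1/\lambda$ against $\gamma=\eta_t$. The only substantive difference is that you are more explicit about the ``only if'' direction, invoking the strict monotonicity of $\Phi$ under PDK/EDK decay to pin down $1/\lambda \asymp \eta_t$, whereas the paper simply declares the conclusion ``obvious'' after the reduction; you also work directly with fourth powers while the paper first reduces to first powers, a cosmetic difference.
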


However, it is still challenging to choose the optimal penalty parameter for testing in practice. A compromising strategy is to use cross validation (CV) method (\cite{golub1979generalized}), which was invented for optimal estimation problems. In the following numerical study, we will show that the CV-based $D_{n,\lambda}$ performs less satisfactorily than our proposed early stopping method. 

\vspace{-3pt}
\subsection{Numerical study I}\label{sec:sim}\label{sec:num:1}
In this section, we compare our early stopping based test statistics (ES) with two other methods: the oracle early stopping (Oracle ES) method, and the penalization-based test described above. Particularity, we consider the hypothesis testing problem $H_0: f=0$. 

Data were generated from the regression model (\ref{eq:model}) with $f(x_i) = c\cdot \cos(4\pi x_i)$, where $x_i\overset{iid}{\sim}\text{Unif}[0,1]$ and $c=0,1$ respectively. $c=0$ is used for examining the size of the test, and $c=1$ is used for examining the power of the test. 
The sample size $n$ is ranged from $100$ to $1000$. We use Gaussian kernel (i.e., $p=2$ in EDK) to fit the data. Significance level was chosen as $0.05$. Both size and power were calculated as the proportions of rejections based on $500$ independent replications. For the ES, we use bootstrap method to approximate the bias with $B=10$ and the step size $\alpha=1$. For the penalization-based test, we use $10-$fold cross validation (10-fold CV) to select the penalty parameter. For the oracle ES, we follow the stopping rule in (\ref{eq:exact_stop_rule}) with constant step size $\alpha=1$. 

Figure \ref{figure:sim:compare} (a) shows that the size of the three testing methods approach the nominal level $0.05$ under various $n$, demonstrating the testing consistency. Figure \ref{figure:sim:compare} (b) displays the power of the three testing rules. The ES exhibits better power performance than the penalization-based test with $10-$fold CV under various sample sizes. Furthermore, as $n$ increases, the power of ES approaches to the Oracle ES, which serves as the benchmark. As shown in Figure \ref{figure:sim:compare} (c), the ES enjoys great computation efficiency compared with the Wald-test with $10-$fold CV, and it is reasonable that our proposed ES takes more time than the oracle ES, due to the extra step for bootstrapping. In Supplementary \ref{supp:numerical}, we show additional synthetic experiments with various $c$ based on second-order Sobolev kernel verifying our theoretical contribution.

\begin{figure}[htb!]
\centering
  \begin{tabular}{ccc}
 \includegraphics[scale=0.33]{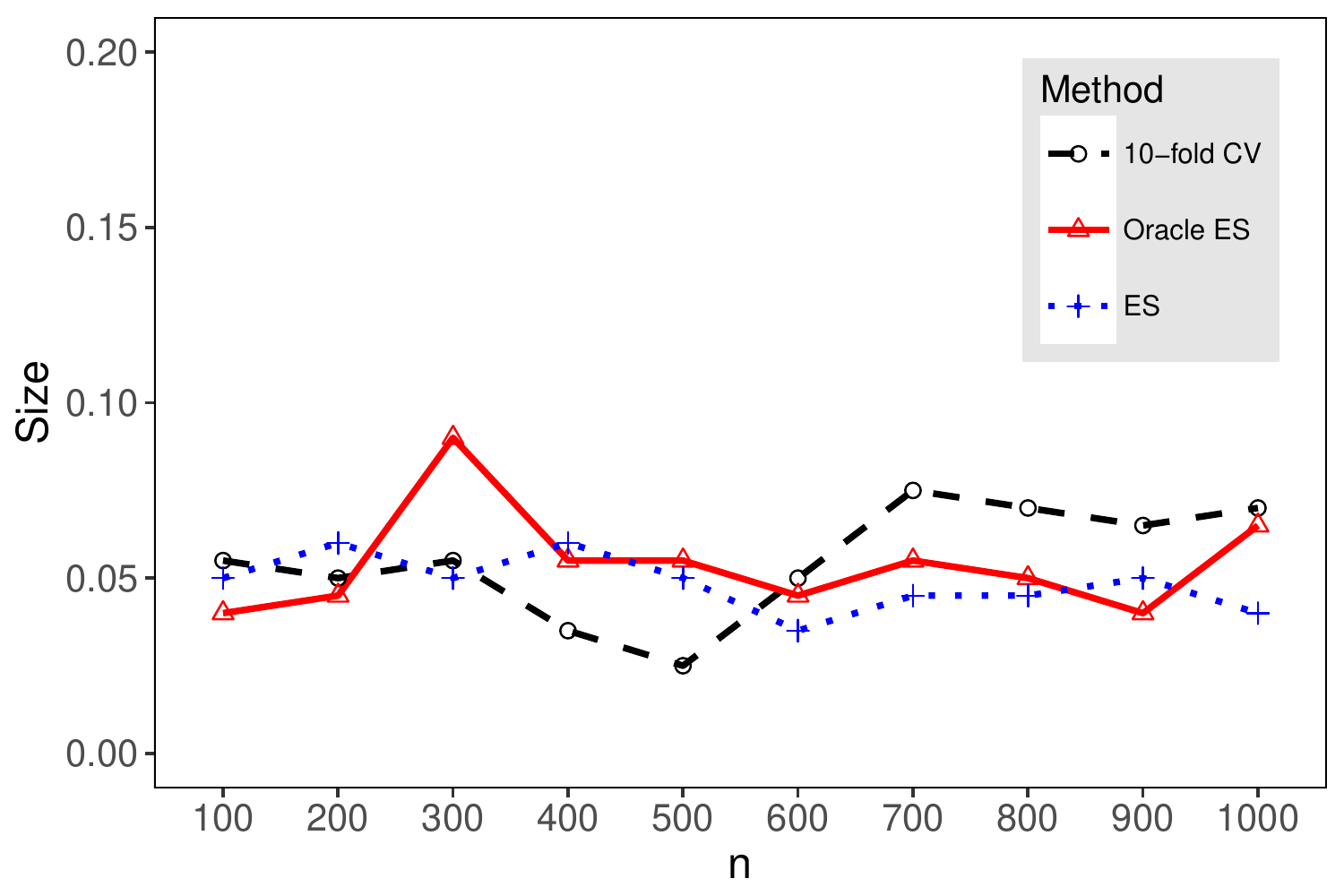}& \includegraphics[scale=0.33]{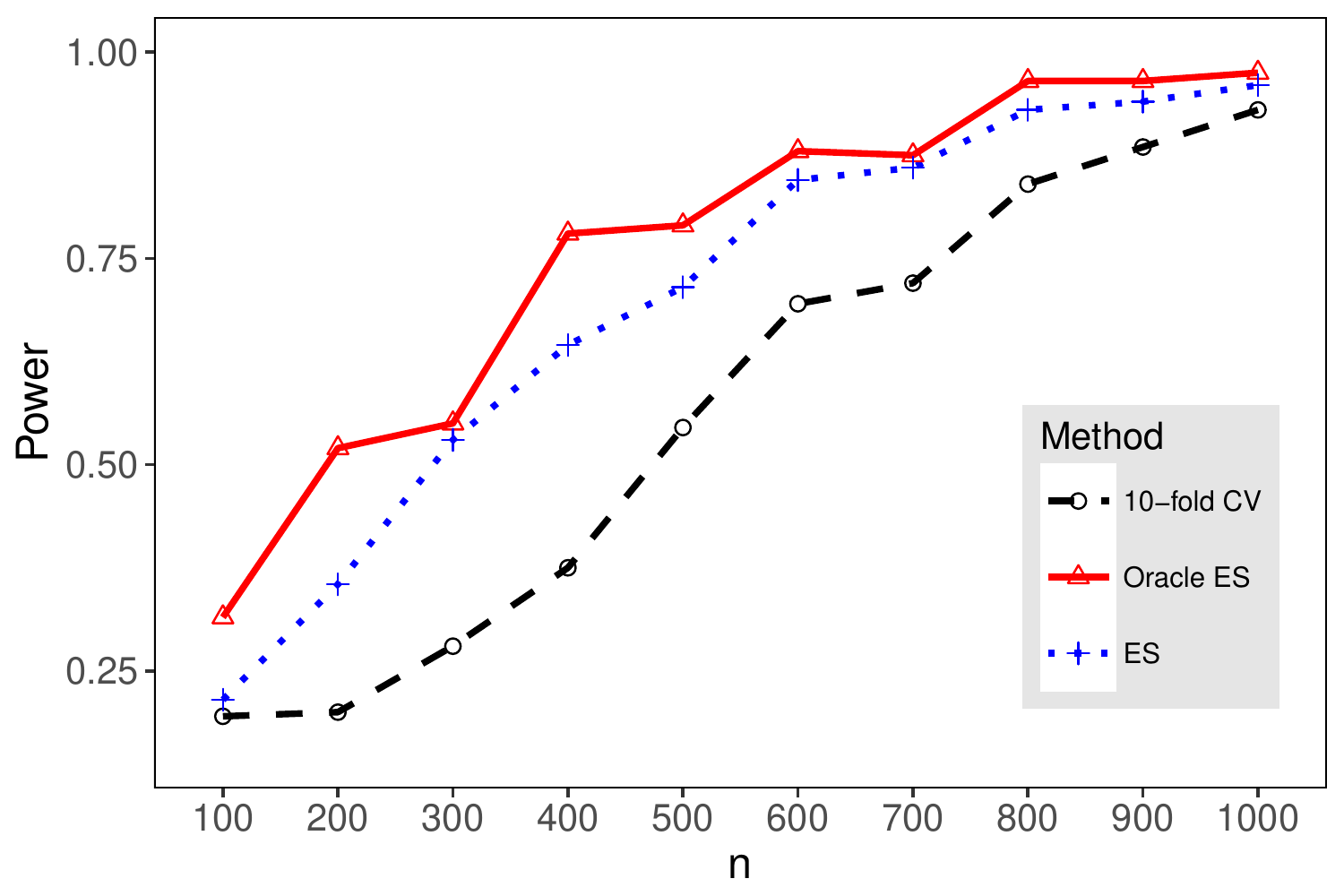}&\includegraphics[scale=0.33]{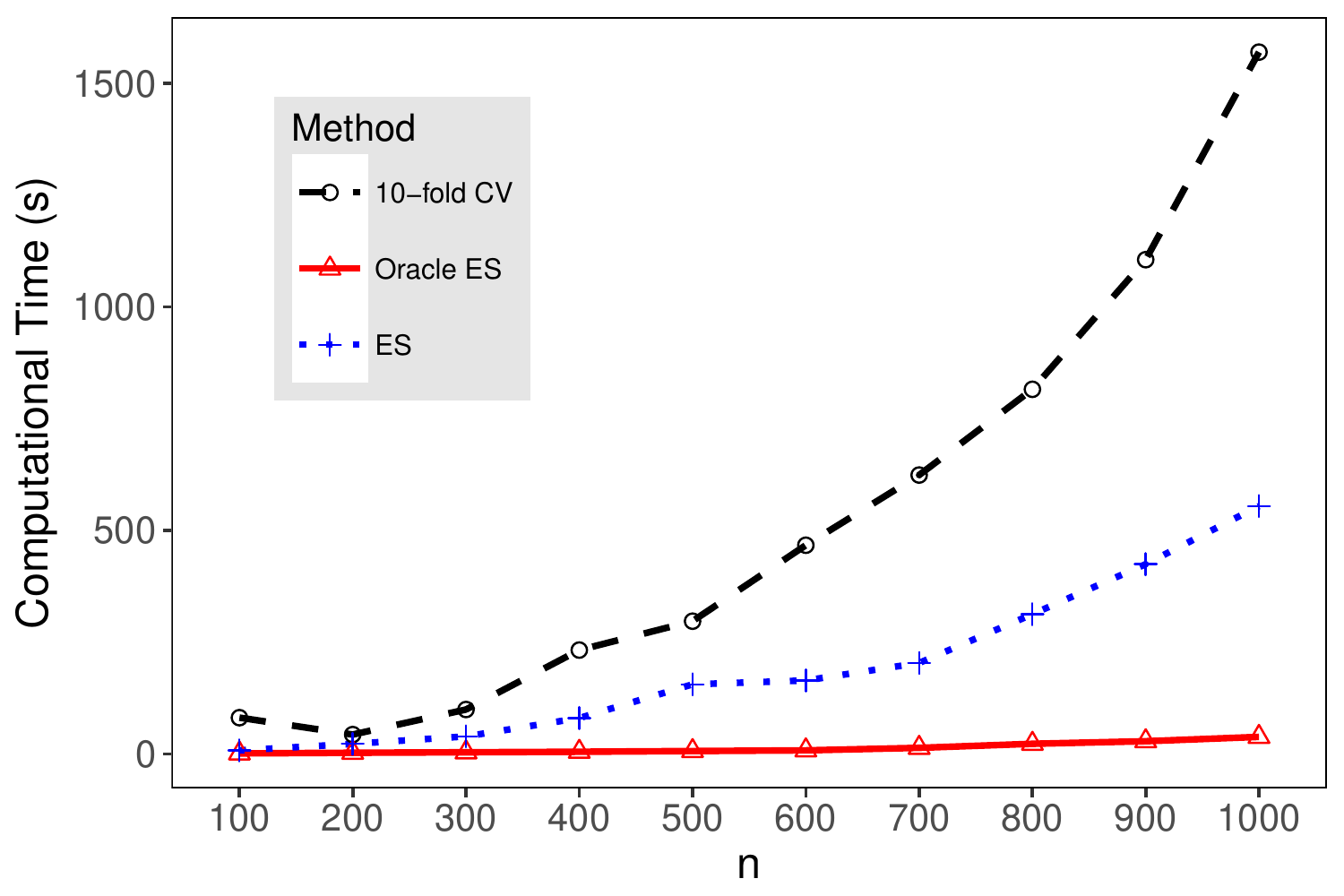} \\   ~~~~~$(a)$ &~~~~~ $(b)$&~~~~~ $(c)$\\
 \end{tabular}
\caption{ $(a)$ is the size with signal strength $c=0$; $(b)$ is the power with signal strength $c=1$; $(c)$ is the computational time (in seconds) for the three testing rules.}\label{figure:sim:compare}
\end{figure}
\vspace{-10pt}
\subsection{Numerical study II}\label{sec:num:2}
In this section, we show synthetic experiments verifying our sharpness results stated in Corollary \ref{cor:test:pdk}, Corollary \ref{cor:test:edk} and Theorem \ref{thm:power:sharp}. 
Data were generated from the regression model (\ref{eq:model}) with $f(x_i)= c(0.8(x_i-0.5)^2 + 0.2\sin(4\pi x_i))$, where  $x_i\overset{iid}{\sim}\text{Unif}[0,1]$, and $c=0,1$, respectively. Other setting is as the same as in Section \ref{sec:num:1}. 

In Figure \ref{figure:sim} (a) and (b), we use the second-order Sobolev kernel (i.e., $m=2$ in PDK) to fit the model, and set the constant step size $\alpha=1$. Corollary \ref{cor:test:pdk} suggests that optimal power can be achieved at the stopping time $T^*\asymp n^{8/9}$. To display the impact of the stopping time on power performance, we set the total iteration steps $T$ as $(n^{8/9})^{\gamma}$ with $\gamma = 2/3, 1, 4/3$ and $n$ ranges from $100$ to $1000$. Figure \ref{figure:sim} (a) shows that the size approaches the nominal level $0.05$ under various choices of $(\gamma, n)$, demonstrating the testing consistency supported by Theorem \ref{thm:consistency}. Figure \ref{figure:sim} (b) displays the power of our testing rule. A key observation is that the power under the theoretically derived stopping rule ($\gamma=1$) performs best, compared with other stopping choices ($\gamma= 2/3, 4/3$). In Figure \ref{figure:sim} (c) and (d), we use Gaussian kernel (i.e., $p=2$ in EDK) to fit the model, and set the constant step size $\alpha=1$. 
Here we set the total iteration steps as $(n/(\log n)^{1/4})^\gamma$ with $\gamma = 2/3, 1, 4/3$ and $n$ ranges from $100$ to $1000$. Note that $\gamma=1$ corresponds to the optimal stopping time in Corollary \ref{cor:test:edk}. Overall, the interpretations are similar to Figure \ref{figure:sim} (a) and (b) for PDK.

\begin{figure}[htb!]
\centering
 \includegraphics[scale=0.50]{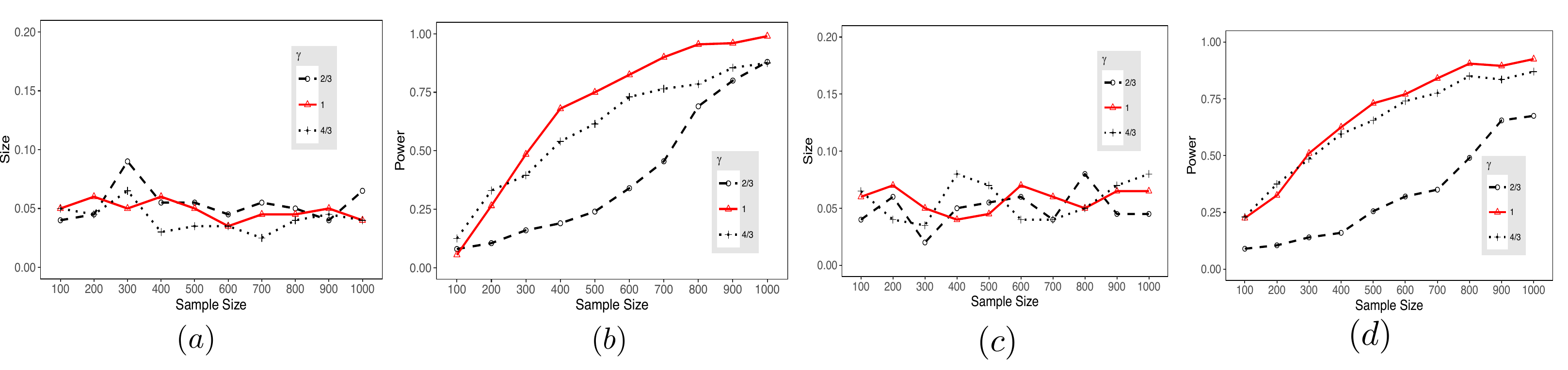}
\caption{ $(a)$ is the size of $D_{n,t}$ with signal strength $c=0$ under PDK; $(b)$ is the power of $D_{n,t}$ with signal strength $c=1$ under PDK. $(c)$ is the size of $D_{n,t}$ with signal strength $c=0$ under EDK; $(d)$ is the power of $D_{n,t}$ with signal strength $c=1$ under EDK.}\label{figure:sim}
\end{figure}

\section{Discussion}

The main contribution of this paper is that we apply the early stopping strategy to nonparametric testing, and propose the first ``sharp'' stopping rule to guarantee minimax optimal testing (to the best of our knowledge). Our stopping rule depends on the eigenvalues of the kernel matrix, especially the first few leading eigenvalues. There are many efficient methods to compute the top eigenvalues fast, see \cite{drineas2005nystrom}, \cite{ma2017diving}. As a future work, we can also introduce the randomly projected kernel methods to accelerate the computing time.

\section{Proof}\label{sec:proof}
\subsection{Proof of Theorem \ref{thm:consistency}}
Denote $\gamma^t = \frac{1}{\sqrt{n}} U^\top \boldf_{t}$, then $\boldf_{t} = \sqrt{n} U\gamma^t$. The recursion equation of the gradient descent algorithm in (\ref{eq:recursion_eq}) is equivalent to 
\begin{equation}\label{eq:recursion_eq1}
\sqrt{n} U \gamma^{t+1} = \sqrt{n} U \gamma^t - \sqrt{n} \alpha_t \boldK U \gamma^t + \alpha^t \boldK \boldy . 
\end{equation}
Note that $\boldy  = \boldf^* + \epsilon = \sqrt{n}U \gamma^* + \sqrt{n}w$, where $\boldf^* = (f^*(x_1), \cdots, f^*(x_n))= \sqrt{n}U\gamma^*$, $\epsilon = (\epsilon_1, \cdots, \epsilon_n)^\top$, and $w= \frac{\epsilon}{\sqrt{n}}$. For theoretical convenience, we suppose $\sigma =1$.  Then (\ref{eq:recursion_eq1}) becomes 
\begin{equation}\label{eq:recursion_eq2}
\gamma^{t+1} = \gamma^t -\alpha_t \Lambda \gamma^t + \alpha_t \Lambda \gamma^*  + \alpha_t w.
\end{equation}
Recall the diagonal shrinkage matrices $S^t$ at step $t$ is defined as follows
\begin{equation*}
S^t = \prod_{\tau=0}^{t-1} \big(I_n - \alpha_\tau \Lambda) \in \mathbb{R}^{n\times n}.
\end{equation*}
Then based on (\ref{eq:recursion_eq2}), we have 
\begin{equation}\label{eq:recursion_eq3}
\gamma^t -\gamma^*  = \big(I-S^t\big)w -S^t \gamma^* .
\end{equation}
The test statistics $D_{n,t}$ can be written as 
\begin{equation}\label{eq:test:gamma}
D_{n,t} = \|\boldf_{t}\|_n^2 = \frac{1}{n} \boldf_{t}^{\top} \boldf_{t} = {\gamma^t}^\top \gamma^t = \|\gamma^t\|_2^2,
\end{equation}
where $\|\cdot\|_2$ is the Euclidean norm. Next, we analyze the null limiting distribution of $\|\gamma^t\|_2^2$. Under the null hypothesis, $\gamma^*  =0$, plugging (\ref{eq:recursion_eq3}) in (\ref{eq:test:gamma}), we have $D_{n,t} = \|\gamma^t\|_2^2 = w^\top (I_n - S^t)^2 w = \frac{1}{n} \epsilon^\top (I_n - S^t)^2 \epsilon$.

 We first derive the null limiting distribution of $D_{n,t}$ conditional on $\boldx$. By the Gaussian assumption of $\epsilon$, we have $\mu_{n,t} \equiv \frac{1}{n}\tr\big((I_n - S^t)^2\big)$ and $\sigma_{n,t}^2 \equiv \frac{2}{n^2}\tr\big((I-S^t)^4\big)$. Define $U=\frac{D_{n,t}-\mu_{n,t}}{\sigma_{n,t}}$, then for any $k\in (-1/2, 1/2)$, we have 
 \begin{align*}
 & \log \E_\epsilon\big(\exp (ikU)\big) \\
 =& \log \E_\epsilon\big(\exp (ik\epsilon^{\top}(I_n - S^t)^2\epsilon/(n\sigma_{n,\lambda}))\big)-ik\mu_{n,t}/(n\sigma_{n,t})\\
 =& -\frac{1}{2}\log \text{det}(I_n-2ik(I_n - S^t)^2/(n\sigma_{n,t})) -ik\mu_{n,t}/(n\sigma_{n,t})\\
 =& ik\cdot \tr((I_n - S^t)^2)/(n\sigma_{n,t}) - k^2 \tr((I_n - S^t)^4)/(n^2\sigma_{n,t}^2)\\&  + \order (k^3 \tr((I_n - S^t)^6)/(n^3\sigma_{n,t}^3))
   -ik\mu_{n,t}/(n\sigma_{n,t})\\
 =& -k^2/2 + \order(k^3 \tr((I_n - S^t)^6)/(n^3\sigma^3_{n,t})),
 \end{align*}
 where $i=\sqrt{-1}$,  $\E_\epsilon$ is the expectation with respect to $\epsilon$, and $I_n$ is $n\times n$ identity matrix. Therefore, to prove the normality of $U$, we need to show $\tr((I_n - S^t)^6)/(n^3\sigma^3_{n,t})=o_P(1)$. 

 Note that $S^t = \prod_{\tau=0}^{t-1} (I_n - \alpha_\tau \Lambda)= \diag(S_{11}^t, \cdots, S_{nn}^t)$, where $S_{jj}^t = \prod_{\tau=0}^{t-1} (1-\alpha_\tau \widehat{\mu}_j)$ for $j=1, \cdots, n$. Then 
 $\tr((I-S^t)^6) = \sum_{j=1}^n (1-S_{jj}^t)^6$,  $\tr((I-S^t)^4) = \sum_{j=1}^n (1-S_{jj}^t)^4$, and 
 $$
 \frac{\tr((I_n - S^t)^6}{n^3\sigma^3_{n,t}} = \frac{\tr((I_n - S^t)^6)}{\tr((I_n - S^t)^4)}\cdot \frac{1}{\sqrt{\tr((I_n - S^t)^4}}\leq \frac{1}{\sqrt{\tr((I_n - S^t)^4}},
 $$
 where the last step is by Lemma \ref{le:shrink} that $(1-S_{jj}^t)\asymp \min\{1, \eta_t \widehat{\mu}_j\} \leq 1$. Then, it is sufficient to prove $\tr((I_n - S^t)^4) \rightarrow \infty$ as $n\rightarrow \infty$ and $t\rightarrow \infty$. 

Let $\tilde{\kappa}_t  = \argmin\{j: \widehat{\mu}_j \leq \frac{1}{\eta_t}\}-1$, then 
\begin{align}\label{eq:trace:upper}
\tr((I-S^t)^4) = &  \sum_{j=1}^n (1-S_{jj}^t)^4 \geq \frac{1}{2^4}\sum_{j=1}^n \big(\min\{1, \eta_t\widehat{\mu}_j\}\big)^4 \nonumber\\
= & \frac{1}{2^4} \big(\tilde{\kappa}_t + \sum_{j=\tilde{\kappa}_t +1}^n (\eta_t \widehat{\mu}_j)^4\big)
\geq  \frac{\tilde{\kappa}_t}{2^4}.
\end{align}

Therefore, when $n\to \infty$ and $t\to \infty$, by Assumption \ref{assump:step_size}, we have $\eta_t \rightarrow \infty$;  and by Assumption \ref{assump:eigenvalue} and Lemma 3.1 in \cite{liu2018nonparametric}, we have $\tilde{\kappa}_t  \rightarrow \infty$ with probability greater than $1-e^{c\kappa_t}$, where $c$ is a constant. 
Then $\E_\epsilon (e^{ikU}) \longrightarrow e^{-\frac{k^2}{2}}$ with probability approaches 1 as $n\to \infty$ and $t\to \infty$.  
 
We next consider $\E_{\boldx}\E_\epsilon (e^{ikU})$ by taking expectation w.r.t $\boldx$ on $\E_\epsilon (e^{ikU})$. We claim $\E_{\boldx} \E_\epsilon (e^{ikU}) \longrightarrow e^{-\frac{k^2}{2}} $ for $k \in (-\frac{1}{2}, \frac{1}{2})$. If not, there exists a subsequence of r.v $\{\boldx_{n_k}\}$, 
 such that for $\forall \varepsilon>0$,
  $|\E_{\boldx_{n_k}} \E_\epsilon e^{ikU} -e^{-\frac{k^2}{2}}|> \varepsilon$. On the other hand, since $\E_\epsilon e^{ikU(\boldx_{n_k})}\overset{P} {\longrightarrow} e^{-\frac{k^2}{2}}$, which is bounded, there exists a sub-sub sequence $\{\boldx_{n_{k_l}}\}$, such that $$\E_\epsilon e^{ikU(\boldx_{n_{k_l}})} \overset{a.s} {\longrightarrow} e^{-\frac{k^2}{2}}.$$ Thus by dominate convergence theorem, $\E_{\boldx_{n_{k_l}}} \E_\epsilon e^{ikU}\longrightarrow e^{-\frac{k^2}{2}}$, which is a contradiction. Therefore, we have $U= \frac{D_{n,t}-\mu_{n,t}}{\sigma_{n,t}}$ asymptotically converges to a standard normal distribution.

\subsection{Proof of Theorem \ref{thm:power} (a)}
\begin{proof}
Recall $\|f_{t}\|_n^2 = {\gamma^t}^\top \gamma^t$ with $\gamma^t = (I-S^t)\epsilon/\sqrt{n} + (I-S^t)\gamma^* $. Therefore,
\begin{equation}\label{eq:test:three_parts}
{\gamma^t}^\top \gamma^t = \frac{1}{n} \epsilon^\top (I-S^t)^2 \epsilon + \frac{2}{\sqrt{n}} \epsilon^\top (I-S^t)^2 \gamma^*  + \gamma^* (I-S^t)^2 \gamma^*  = W_1 + W_2 + W_3.
\end{equation}
For $W_3$, since $\|f^*\|_n^2 = \|\gamma^* \|_2^2\geq C_\varepsilon^2d_{n,t}^2$, 
$$
W_3 = \|(I-S^t)\gamma^* \|_2^2 \geq \frac{1}{2} \|\gamma^* \|_2^2 - \|S^t \gamma^* \|_2^2 \geq \frac{C_\varepsilon^2}{2} (\frac{1}{\eta_t} + \sigma_{n,t}) - \frac{1}{e\eta_t} \geq \frac{C_\varepsilon^2 \sigma_{n,t}}{2},
$$
where $C_\varepsilon^2 \geq \frac{2}{e}$ is a constant, and the specfic requirement of $C_\varepsilon^2$ will be illustrated later. 

Recall $W_2= \frac{1}{\sqrt{n}}\epsilon^{\top}(I-S^t)^2\gamma^* $. Consider $a^{\top}(I-S^t)^2 a$, where $a=(a_1,\cdots, a_n) \in \bbR^n$ is an arbitrary vector. Then $a^{\top}(I-S^t)^2 a \leq \lambda_{\max}((I-S^t)^2)a^{\top}a \leq a^\top a$.
For $W_2$, we have
\begin{align*}
 \E_\epsilon W_2^2  &= {\gamma^*}^{\top}(I-S^t)^4\gamma^*
  \leq {\gamma^*}^{\top}(I-S^t)^2\gamma^*  =W_3.
 \end{align*}
Then 
\begin{equation}\label{ineq:T3_boundedby_T1}
 \Pr \left(|W_2| \geq \varepsilon^{-\frac{1}{2}}W_3^{1/2}\right) \leq \frac{\E_\epsilon W_2^2}{\varepsilon^{-1}W_3} \leq \varepsilon 
\end{equation}
Define  $\mathcal{E}_1 = \{\frac{W_1 - \mu_{n,t}}{\sigma_{n,t}} \leq C'_\varepsilon\}$, where $C'_\varepsilon$ satisfies $\Pr (\mathcal{E}_1|\boldx) \geq 1-\varepsilon$ for any $t\geq t_\varepsilon$ and $n\geq N_\varepsilon$, with probability greater than $1-e^{-c\kappa_t}$. 
Also define $\mathcal{E}_2 = \{W_2 \geq -\varepsilon^{-1/2}W_3^{1/2}\}$ and $\mathcal{E}_3 = \{W_3 \geq C_\varepsilon^2  \sigma_{n,t}/2\}$.  
Finally, with probablility greater than $1-e^{-c\kappa_t}$, 
\begin{align*}
& \Pr_f\Big(\frac{W_1 + W_2 + W_3-\mu_{n,t}}{\sigma_{n,t}}\geq z_{1-\alpha/2} | \boldx \Big) \\
\geq & \Pr_f\Big( \frac{W_2+ W_3 }{\sigma_{n,t}}+\frac{W_1-\mu_{n,t}}{\sigma_{n,t}} \geq z_{1-\alpha/2},\, \mathcal{E}_1 \cap \mathcal{E}_2 \cap \mathcal{E}_3  |\boldx \Big)\\
\geq & \Pr_f \Big(\frac{W_3(1-\varepsilon^{-1/2}W_3^{-1/2})}{\sigma_{n,t}} -C'_{\varepsilon} \geq z_{1-\alpha/2}, \, \mathcal{E}_1 \cap \mathcal{E}_2 \cap \mathcal{E}_3 |\boldx\Big)\\
\geq & \Pr_f \Big( C_\varepsilon(1-\frac{1}{\sqrt{C_\varepsilon \sigma_{n,t}\varepsilon}})- C'_\varepsilon \geq z_{1-\alpha/2}, \mathcal{E}_1 \cap \mathcal{E}_2 \cap \mathcal{E}_3 |\boldx \Big)\\
= & \Pr_f ( \mathcal{E}_1 \cap \mathcal{E}_2 \cap \mathcal{E}_3|\boldx)\\
\geq &  1-2\varepsilon
\end{align*}
The second to the last equality is achieved by choosing $C_\varepsilon$ to satisfy $$\frac{1}{\sqrt{C_\varepsilon \sigma_{n,t}\varepsilon}} < \frac{1}{2}\;\;\mbox{and}\;\;\frac{1}{2}C_\varepsilon -C'_\varepsilon \geq z_{1-\alpha/2}.$$

\end{proof}

\subsection{Proof of Corollary \ref{cor:test:pdk} and Corollary \ref{cor:test:edk}}
We first prove Corollary \ref{cor:test:pdk}. 
\begin{proof}
By the stopping rule (\ref{eq:stop_rule}), at $T^*$, we have 
$$
\frac{1}{\eta_{T^*}} \asymp \frac{1}{n}\sqrt{\sum_{i=1}^n \min\{1, \eta_{T^*}\widehat{\mu}_i\}}.
$$
On the other hand, suppose $T^*<n$, then with probability at least $1-e^{-c\kappa_{T^*}}$,
$$
\sum_{i=1}^n \min\{1, \eta_{T^*}\widehat{\mu}_i\} = \widetilde{\kappa}_{T^*}+ \eta_{T^*} \sum_{i=\widetilde{\kappa}_{T^*} +1}^n \widehat{\mu}_i \asymp \widetilde{\kappa}_{T^*},
$$
the last step is by Lemma \ref{le:tail:eigenvalues}. Then we have
$$
\frac{1}{\eta_{T^*}} \asymp \frac{\sqrt{\widetilde{\kappa}_{T^*}}}{n}.
$$
By Lemma \ref{le:main eigenvalues} (\ref{le:main eigenvalues:a}), with probability at least $1-e^{-c_mn\kappa_{T^*}^{-4m/(2m-1)}}$, $\widetilde{\kappa}_{T^*} \asymp \kappa_{T^*}$. Then $\frac{1}{\eta_{T^*}} \asymp \frac{\sqrt{\kappa_{T^*}}}{n}$ with $\kappa_{T^*}$ satisfies $(\kappa_{T^*})^{-2m} \asymp \frac{1}{\eta_{T^*}}$. Finally we have $\eta_{T^*} \asymp n^{4m/(4m+1)}$, and $d_n^* \asymp n^{-2m/(4m+1)}$. 

Corollary \ref{cor:test:edk} can be achieved similarly. 

\end{proof}

\subsection{Proof of Theorem \ref{thm:power:sharp}}
(1) We first consider the case when $t\ll T^*$. 
\begin{proof}

Suppose the ``true'' function $f(\cdot) = f^*(\cdot) = \sum_{i=1}^n K(x_i, \cdot)w_i$, then $\boldf^* = (f^*(x_1),\cdots, f^*(x_n)) = n\boldK \boldw$, where $\boldw = (w_1,\cdots, w_n)$. Let $\boldw = U\boldalpha$, then $\boldf^*= n UD\boldalpha$, where $\boldalpha = (\alpha_1,\cdots, \alpha_n)$. We construct $f^*(\cdot)$ with the coefficients $\{\alpha_\nu\}_{\nu=1}^n$ satisfies
\begin{equation}\label{eq:const:test_a}
 \alpha_\nu^2 = 
  \begin{cases}
   \frac{C}{2n({\kappa}_t -1)}\mu_{g{\kappa}_t +k}^{-1}       & \quad \text{for} \;\;\nu=(g{\kappa}_t +k) \quad k=1,2,\cdots,{\kappa}_t -1\\\
    0  & \quad \text{otherwise}\\
  \end{cases}
\end{equation}
Since $t\ll T^*$, by the definition of ${\kappa}_t $, we have $\kappa_t  < \kappa_{T^*}$. Choose $g\geq 1$ to be an integer satisfying $(g+1){\kappa}_t  \leq  \kappa_{T^*}$ and $n\eta_t^2 \mu_{g{\kappa}_t }^3 \ll {\kappa}_t ^{1/2}$. The existence of such $g$ can be verified directly based on the expression of the PDK and EDK eigenvalues.

Note that $\frac{1}{n} < \frac{1}{\eta_{T^*}} < \frac{1}{\eta_t} $, then by Lemma \ref{le:main eigenvalues}, we have $\frac{1}{2}\mu_{g{\kappa}_t} \le \widehatmu_{g{\kappa}_t } \leq \frac{3}{2} \mu_{g{\kappa}_t} $ with probability approaches 1. Consider the event $\mathcal{A} = \big\{ |\widehat{\mu}_{g{\kappa}_t } - \mu_{g{\kappa}_t}| \leq \frac{1}{2}\mu_{g{\kappa}_t} \big\}$, then $\Pr(\mathcal{A}) \to 1$ as $n \to \infty$. Conditional on the event $\mathcal{A}$, we have 
$$
\|f\|^2_{\mathcal{H}} = \|\sum_{i=1}^n K(x_i, \cdot)w_i\|^2_{\mathcal{H}}  = n \boldalpha^\top D \boldalpha = n \sum_{k=1}^{{\kappa}_t -1} \alpha^2_{g{\kappa}_t +k}\widehat{\mu}_{g{\kappa}_t +k} \leq  C.
$$
Furthermore, conditional on $\mathcal{A}$,
$$
\|f\|_n^2 = n\boldalpha^\top D^2 \boldalpha = n \sum_{k=1}^{{\kappa}_t -1} \alpha^2_{g{\kappa}_t +k}\widehat{\mu}^2_{g{\kappa}_t +k} \geq \frac{C}{4}\widehat{\mu}_{(g+1){\kappa}_t } \gg \widehatmu_{\kappa_{T^*}} \geq \frac{1}{\eta_{T^*}} = d_n^*.
$$
By (\ref{eq:test:three_parts}), we have 
$$
D_{n,t} = \|f_{t}\|_n^2 = \frac{1}{n} \epsilon^\top (I-S^t)^2 \epsilon + \frac{2}{\sqrt{n}} \epsilon^\top (I-S^t)^2 \gamma^*  + \gamma^* (I-S^t)^2 \gamma^*  = W_1 + W_2 + W_3,
$$
where $\gamma^*  = \frac{1}{\sqrt{n}} U^\top \boldf^*$. Note that 
$$
W_3 = \gamma^* (I-S^t)^2 \gamma^*  = n\sum_{i=1}^n \alpha_i^2 \widehat{\mu}_i^2 (1-S^t_{ii})^2 \leq \frac{C \eta_t^2}{{\kappa}_t -1} \sum_{k=1}^{{\kappa}_t -1}\widehat{\mu}^3_{g {\kappa}_t +k} \leq C \eta_t^2 \widehatmu_{g{\kappa}_t }^3,
$$
where the first inequality is based on the property of shrinkage matrices $S^t$ in Lemma \ref{le:shrink}. Conditional on the event $\mathcal{A}$, we have 
$$
W_3 \leq C \eta_t^2 \widehatmu_{g{\kappa}_t }^3 \leq \frac{27C}{8} \eta_t^2 \mu_{g{\kappa}_t }^3 \ll \kappa^{1/2}_{t}/n,
$$
where the last step is by the property on the integer $g$. Then we have $W_3 = o(\sigma_{n,t})$. By (\ref{ineq:T3_boundedby_T1}), we have $W_2 = W_1^{1/2}O_{P_f}(1) = o_{P_f}(\sigma_{n,t})$. Therefore,
\begin{align*}
 \frac{D_{n,t}-\mu_{n,t}}{\sigma_{n,t}}  = &   \frac{W_1 -\mu_{n,t}}{\sigma_{n,t}} + \frac{W_2+W_3}{\sigma_{n,t}}\\
= & \frac{W_1 -\mu_{n,t}}{\sigma_{n,t}} + o_{P_f}(\sigma_{n,t})\\
 \overset{d}{\longrightarrow} & N(0,1).
\end{align*}
Then we have, as $n\to \infty$, with probability approaches 1, 
$$
\inf_{f \in \mathcal{B}, \|f\|_n \geq C'd_n^*} \Pr_{f}\big(\phi_{n,
t}=1|\boldx\big) \leq \Pr_f\big(\phi_{n,t}=1|\boldx\big) \rightarrow \alpha.
$$
\end{proof}

(2) We next consider the case when $t\gg T^*$. 
\begin{proof}
We still suppose the true function  $f(\cdot) = f^*(\cdot) = \sum_{i=1}^n K(x_i, \cdot)w_i$, then $\boldf^* = n\boldK \boldw$, where $\boldw = (w_1,\cdots, w_n)$. Let $\boldw = U \boldalpha$, then $\boldf^* = n UD\boldalpha$, where $\boldalpha = (\alpha_1,\cdots, \alpha_n)$. Construct the coefficients $\alpha_\nu$ satisfying
\begin{equation}\label{eq:const:test_b}
 \alpha_\nu^2 = 
  \begin{cases}
    \frac{C_1}{n}\frac{1}{\eta_{T^*}}\mu_\nu^{-2}       & \quad \text{for} \;\;\nu=1; \\
    0  & \quad \text{otherwise.}\\
  \end{cases}
\end{equation}
Here $C_1$ is a constant independent with $n$. In the following analysis, we conditional on the event $\mathcal{A} = \big\{ |\widehat{\mu}_{1 } - \mu_{1}| \leq \frac{1}{2}\mu_{1} \big\}$. 
First, 
$$
\|f\|^2_{\mathcal{H}} = \|\sum_{i=1}^n K(x_i, \cdot)w_i\|^2_{\mathcal{H}}  = n \boldalpha^\top D \boldalpha = n \alpha_1^2\widehat{\mu}_1 \leq  \frac{3C_1}{2\eta_{T^*}} \mu_1^{-1}\leq  C.
$$
The last inequality is based on the fact that $\eta_{T^*} \to \infty$ as $n \to \infty$.
Furthermore, 
$$
\|f\|_n^2 = n\boldalpha^\top D^2 \boldalpha = n \alpha_1^2\widehat{\mu}_1^2 \geq  \frac{C_1}{4\eta_{T^*}} \geq C_2 d_n^*,
$$
with $C_1$ satisfying $C_1/4 \geq C_2$. 
By (\ref{eq:test:three_parts}), we have 
$$
D_{n,t} = \|f_{t}\|_n^2 = \frac{1}{n} \epsilon^\top (I-S^t)^2 \epsilon + \frac{2}{\sqrt{n}} \epsilon^\top (I-S^t)^2 \gamma^*  + \gamma^* (I-S^t)^2 \gamma^*  = W_1 + W_2 + W_3,
$$
where $\gamma^*  = \frac{1}{\sqrt{n}} U^\top f^*(\boldx )$. Note that 
\begin{align*}
W_3 = &  \gamma^* (I-S^t)^2 \gamma^*  = n\sum_{i=1}^n \alpha_i^2 \widehat{\mu}_i^2 (1-S^t_{ii})^2 \leq n \alpha_1^2 \widehat{\mu}_1^2 \leq \frac{9C_1}{4\eta_{T^*}} \\
\ll & \sigma_{n,t} = \frac{1}{n} \sqrt{\sum_{i=1}^n \min\{1, \eta_t\widehat{\mu}_i\}},
\end{align*}
then we have $W_3 = o(\sigma_{n,t})$. By (\ref{ineq:T3_boundedby_T1}), we have $W_2 = W_1^{1/2}O_{P_f}(1) = o_{P_f}(\sigma_{n,t})$. Therefore,
\begin{align*}
 \frac{D_{n,t}-\mu_{n,t}}{\sigma_{n,t}}  = &   \frac{W_1 -\mu_{n,t}}{\sigma_{n,t}} + \frac{W_2+W_3}{\sigma_{n,t}}\\
= & \frac{W_1 -\mu_{n,t}}{\sigma_{n,t}} + o_{P_f}(\sigma_{n,t})\\
 \overset{d}{\longrightarrow} & N(0,1).
\end{align*}
Since $\Pr(\mathcal{A}) \to 1$ as $n\to \infty$, we have, as $n\to \infty$, with probability approaches 1, 
$$
\inf_{f \in \mathcal{B}, \|f\|_n \geq C'd_n^*} \Pr_{f}\big(\phi_{n,
t}=1 | \boldx \big) \leq \Pr_{f^*}\big(\phi_{n,t}=1 | \boldx \big) \rightarrow \alpha.
$$

\end{proof}

\subsection{Proof of Sharpness in estimation}\label{pf:est}
\begin{proof}
We first prove Theorem \ref{thm:est:sharp} (\ref{thm:est:sharp:a}) for PDK. 

Suppose the true function $f (\cdot) = f^* (\cdot) = \sum_{i=1}^n K(x_i, \cdot)w_i$, then  $\boldf^* = n\boldK \boldw$, where $\boldw = (w_1,\cdots, w_n)$. Let $\boldw= U \boldalpha $, then $\boldf^* = n UD\boldalpha$, where $\boldalpha = (\alpha_1,\cdots, \alpha_n)$. 
Define 
$\breve{\kappa}_t = \argmin\{j:\mu_j < \frac{1}{3\eta_t}\}-1$, and we construct $f^*$ with the coefficients $\alpha_\nu$ satisfying
\begin{equation}\label{eq:const:est}
 \alpha_\nu^2 = 
  \begin{cases}
    \frac{C}{2n}\frac{1}{\breve{\kappa}_t}\mu_{\breve{\kappa}_t +k }^{-1}       & \quad \text{for} \;\;\nu=\breve{\kappa}_t +k, \; k=1,\cdots, \breve{\kappa}_t/2; \\
    0  & \quad \text{otherwise}.\\
  \end{cases}
\end{equation}
 When $\eta_{\tilde{T}} = n^{2m/(2m+1)}$, then $\kappa_{\tilde{T}} = \argmin\{j:\mu_j < \frac{1}{\eta_{\tilde{T}}}\}-1 \lesssim n^{1/(2m+1)} $ by direct calculation with $\mu_i \asymp i^{-2m}$.  Since $t\ll \tilde{T}$, by Assumption \ref{assump:step_size}, $\eta_t \ll \eta_{\tilde{T}}$, then we have $\breve{\kappa}_t \leq \kappa_{\tilde{T}}\lesssim n^{1/(2m+1)}$ and $3\breve{\kappa}_t/2 < n$. 

Condition on the event $\mathcal{A} = \big\{ |\widehat{\mu}_{i } - \mu_{i}| \leq \frac{1}{2}\mu_{i} \big\}$, it is easy to see  
 $$\|f\|^2_{\mathcal{H}} = n\alpha^\top D \alpha \leq  C.$$
Note that 
\begin{eqnarray}
\|f_{t} -f^*\|_n^2&=& \|\E_\epsilon f_{t} -f^*\|_n^2 + \|f_{t} -\E_\epsilon f_{t}\|_n^2 + \frac{2}{n} \big(\boldf_{t} -\E_\epsilon \boldf_{t}\big)^\top \big(\E_\epsilon \boldf_{t} -\boldf^* \big)\nonumber\\
&\equiv& W_1 + W_2 + W_3. \label{proof:est:sharp:decompose}
\end{eqnarray}
Consider the bias term $W_1$, since $\boldf_{t} = \sqrt{n}U\gamma^t$ with $\gamma^t = (I-S^t)w - (I-S^t)\gamma^* $, where $\gamma^*  = \sqrt{n} D \boldalpha$, we have

\begin{align*}
W_1 = & \|\gamma^t -\gamma^* \|_2^2 = \gamma^{*\top} S^2 \gamma^*  = n \boldalpha^\top D^2 S^2 \boldalpha = n \sum_{i=1}^n \alpha_i^2 \widehat{\mu}_i^2 S_{ii}^2\\
\end{align*}
By Lemma \ref{le:shrink}, we have $S^t_{ii} \geq 1- \min\{1, \eta_t \widehat{\mu}_i\}$.  
Condition on the event $\mathcal{A} = \big\{ |\widehat{\mu}_{i } - \mu_{i}| \leq \frac{1}{2}\mu_{i} \big\}$, we have $\eta_t \widehat{\mu}_{\breve{\kappa}_t+1} \leq \frac{3}{2}\eta_t \mu_{\breve{\kappa}_t+1} \leq \frac{1}{2}$, 
then $0\leq \min\{1, \eta_t\widehat{\mu}_i\}< \frac{1}{2}$ for $i=\breve{\kappa}_t+1, \cdots, \breve{\kappa}_t+\breve{\kappa}_t/2$.  
Then 
\begin{align*}
W_1 \geq & n \sum_{i=1}^n \alpha_i^2 \widehat{\mu}_i^2 (1-\min\{1, \eta_t\widehat{\mu}_i\})^2
\geq  \frac{n}{4} \sum_{k=1}^{\breve{\kappa}_t/2} \alpha_{\breve{\kappa}_t+k}^2 \widehat{\mu}_{\breve{\kappa}_t +k}^2 \\
\geq & \sum_{k=1}^{\breve{\kappa}_t/2} \frac{C}{8\breve{\kappa}_t} \widehat{\mu}_{\breve{\kappa}_t + k} \geq \frac{C}{16}\widehat{\mu}_{\frac{3\breve{\kappa}_t}{2}} \geq \frac{C}{32}\mu_{\frac{3\breve{\kappa}_t}{2}} \geq c_m (\breve{\kappa}_t)^{-2m} \geq c_m' \mu_{\breve{\kappa}_t} \geq \frac{c_m'}{3\eta_t}, 
\end{align*}
where the sixth inequality is based on the PDK's property that $\mu_i \asymp i^{-2m}$, $c_m$, $c_m'$ are constants depend on $m$. 

On the other hand, by Lemma \ref{le:bias}, $W_1 \lesssim \frac{1}{\eta_t}$. Therefore, $W_1= \order_P{(\frac{1}{\eta_t})}$. 
Furthermore, by the proof of Lemma \ref{le:variance}, we have $W_2=\order_P(\mu_{n,t})$. By the stopping rule defined in (\ref{eq:est:stopping_rule}), when $t\ll \tilde{T}$, $\frac{1}{\eta_t} \gg \mu_{n,t}$. Then we have $W_2= o_p(W_1)$, and $W_3 =o_P(W_1)$ due to Cauchy-Schwarz inequality $W_3 \leq W_1^{1/2}W_2^{1/2}$. Finally, by Lemma \ref{le:main eigenvalues}, with probability approaching 1, 
\[
\sup_{f \in \cB} \|f_t-f^*\|_n^2 \gtrsim \sup_{f \in \cB} \|\E_\epsilon f_t -f^*\|_n^2 \gtrsim \frac{1}{\eta_t} \gg \frac{1}{\eta_{\tilde{T}}}.
\]

We next prove Theorem \ref{thm:est:sharp} (\ref{thm:est:sharp:b}) for EDK. 
Similar to the proof of Theorem \ref{thm:est:sharp} (\ref{thm:est:sharp:a}), we construct the coefficients $\{\alpha_\nu\}_{\nu=1}^n$ as
\begin{equation}\label{eq:const:est}
 \alpha_\nu^2 = 
  \begin{cases}
    \frac{C}{2n}\mu_{\breve{\kappa}_t +1 }^{-1}       & \quad \text{for} \;\;\nu=\breve{\kappa}_t +1; \\
    0  & \quad \text{otherwise}.\\
  \end{cases}
\end{equation}
Then, it is easy to see that, conditional on $\mathcal{A}$, 
 $\|f\|^2_{\mathcal{H}} = n\alpha^\top D \alpha \leq C.$  
 Equation (\ref{proof:est:sharp:decompose}) also holds in $EDK$. 
$W_1=\|\E_\epsilon f_t -f^*\|_n^2$ can be lower bounded as follows 
\begin{align*}
W_1 \geq & n \sum_{i=1}^n \alpha_i^2 \widehat{\mu}_i^2 (1-\min\{1, \eta_t\widehat{\mu}_i\})^2
\geq  \frac{n}{4} \alpha_{\breve{\kappa}_t+1}^2 \widehat{\mu}_{\breve{\kappa}_t +1}^2 
\geq  \frac{C}{8}  \widehat{\mu}_{\breve{\kappa}_t + 1} \geq \frac{C}{16}\mu_{\breve{\kappa}_t + 1} \gg \mu_{\kappa_{\tilde{T}}} > \frac{1}{\eta_{\tilde{T}}},  
\end{align*}
where the second to last step is based on $\breve{\kappa}_t +1\ll \kappa_{\tilde{T}}$, which will be shown in the following. By the definition of $\breve{\kappa}_t$, $\mu_{\breve{\kappa}_t}> \frac{1}{3\eta_t}$, then $\breve{\kappa}_t < (\frac{\log 3 \eta_t}{\beta})^{1/p}$ by plugging in $\mu_i \asymp \exp(-\beta i^p)$. Similarily, $\kappa_{\tilde{T}} > (\frac{\log \eta_{\tilde{T}}}{\beta})^{1/p}-1$. By Assumption \ref{assump:step_size}, as $t\ll \tilde{T}$, $\eta_t \ll \eta_{\tilde{T}} = n/(\log n)^{1/p}$ with $n$ diverges, we have 
$$
\breve{\kappa}_t +1 < \big(\frac{\log 3 \eta_t}{\beta}\big)^{1/p} + 1 \ll \big(\frac{\log \eta_{\tilde{T}}}{\beta}\big)^{1/p}-1 < \kappa_{\tilde{T}}.
$$

The analysis of $W_2$ and $W_3$ are as the same in the proof of Theorem \ref{thm:est:sharp} (\ref{thm:est:sharp:a}). Finally we have with probability approaching 1, 
\[
\sup_{f \in \cB} \|f_t-f^*\|_n^2 \gtrsim \sup_{f \in \cB} \|\E_\epsilon f_t -f^*\|_n^2 \gg \frac{1}{\eta_{\tilde{T}}}.
\]

\end{proof}

We provide the following lemma to bound the variance of $f_t$. 
\begin{Lemma}\label{le:variance}
Suppose Assumption \ref{assump:step_size} is satisfied. Then for $t=1, 2, \cdots$, it holds that
$$
\|f_{t} - \E_\epsilon f_t\|_n^2 = O_P(\mu_{n,t})
$$
where $\mu_{n,t} \asymp \frac{1}{n} \sum_{i=1}^n \min\{1, \eta_t\widehat{\mu}_i\}$. 
\end{Lemma}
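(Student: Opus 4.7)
\textbf{Proof proposal for Lemma \ref{le:variance}.} My plan is to reduce the variance term to the same quadratic form in $\epsilon$ that already appeared in the proof of Theorem \ref{thm:consistency}, and then simply apply Markov's inequality on the conditional expectation. First I would use the spectral representation established in the proof of Theorem \ref{thm:consistency}: writing $\gamma^t=\frac{1}{\sqrt n}U^\top \boldf_t$ and $\gamma^*=\frac{1}{\sqrt n}U^\top\boldf^*$, the recursion together with $\boldy=\boldf^*+\epsilon$ gives
$$\gamma^t-\gamma^* \;=\;(I-S^t)w - S^t\gamma^*,\qquad w=\epsilon/\sqrt n.$$
Taking the expectation in $\epsilon$ (with $x$ fixed) kills the first term, so $\gamma^t-\E_\epsilon\gamma^t=(I-S^t)w$, and since $\|f_t\|_n^2=\|\gamma^t\|_2^2$ and the map is affine in $\epsilon$,
$$\|f_t-\E_\epsilon f_t\|_n^2 \;=\; \|\gamma^t-\E_\epsilon\gamma^t\|_2^2 \;=\; \tfrac{1}{n}\epsilon^\top(I-S^t)^2\epsilon.$$

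Next I would take the conditional expectation in $\epsilon$ using that $\epsilon\sim N(0,\sigma^2 I_n)$:
$$\E_\epsilon \|f_t-\E_\epsilon f_t\|_n^2 \;=\; \tfrac{\sigma^2}{n}\tr\bigl((I-S^t)^2\bigr) \;=\; \sigma^2 \mu_{n,t},$$
where the last equality is just the definition of $\mu_{n,t}$ from Theorem \ref{thm:consistency}. Since this quantity is non-negative, Markov's inequality yields, for every $M>0$,
$$\Pr\Bigl(\|f_t-\E_\epsilon f_t\|_n^2 > M\sigma^2 \mu_{n,t}\,\Bigm|\,\boldx\Bigr)\;\le\;\tfrac{1}{M},$$
so conditional on $\boldx$ (and hence unconditionally) we get $\|f_t-\E_\epsilon f_t\|_n^2=O_P(\mu_{n,t})$. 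Finally I would invoke Lemma \ref{le:mean_var} to substitute $\mu_{n,t}\asymp \tfrac{1}{n}\sum_{i=1}^n\min\{1,\eta_t\widehat\mu_i\}$, giving the stated form.

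There is no real obstacle here; the only tiny subtlety is that $\mu_{n,t}$ is itself random through $\boldx$, so the $O_P$ statement should be read as a bound on the ratio $\|f_t-\E_\epsilon f_t\|_n^2/\mu_{n,t}$. This follows from the conditional Markov bound above together with the fact that the upper bound $1/M$ does not depend on $\boldx$, so we can integrate out $\boldx$ without difficulty. No concentration, no chaining, no eigenvalue comparison is required beyond what is already recorded in Lemma \ref{le:mean_var} and Lemma \ref{le:shrink}.
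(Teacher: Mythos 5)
Your proof is correct and proves exactly what the lemma states, but it takes a genuinely different and more elementary route than the paper. You reduce the variance term to the quadratic form $\frac{1}{n}\epsilon^\top (I-S^t)^2\epsilon$, compute its conditional mean $\frac{\sigma^2}{n}\tr((I-S^t)^2)=\sigma^2\mu_{n,t}$, and invoke Markov's inequality, which gives $\Pr\bigl(\|f_t-\E_\epsilon f_t\|_n^2 > M\sigma^2\mu_{n,t}\mid\boldx\bigr)\le 1/M$ uniformly in $\boldx$, so the $O_P$ claim follows after integrating out $\boldx$. The paper instead applies the Hanson--Wright concentration inequality to the same quadratic form, together with the facts $\|(I-S^t)^2\|^2_{\mathrm F}\le\opnorm{(I-S^t)^2}\tr((I-S^t)^2)$, $\opnorm{(I-S^t)^2}\le 1$, and $\tr((I-S^t)^2)\gtrsim\tilde\kappa_t\to\infty$; this yields the sharper statement that $\|f_t-\E_\epsilon f_t\|_n^2\le\frac{3}{2}\mu_{n,t}$ with probability at least $1-\exp(-c\,\tr((I-S^t)^2))$. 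Your Markov argument is simpler, requires only a finite second moment of $\epsilon$ (no Gaussianity, no sub-Gaussian norm), and it suffices for the $O_P$ conclusion as stated. What the paper's concentration argument buys is an exponentially small failure probability with an explicit constant, which is of a piece with the high-probability statements used in the estimation results of Section~\ref{sec:sharp:est} (cf.\ the $1-c_1\exp(-c_2 n/\eta_{\tilde T})$ bound in Theorem~\ref{thm:rasku}); your bound would degrade those to polynomial tails if used as a drop-in replacement there. As a proof of Lemma~\ref{le:variance} itself, however, your argument is complete and correct.
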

\begin{proof}
First, by (\ref{eq:recursion_eq3}) and the fact that $\boldf_{t} = \sqrt{n}U\gamma^t$, we have $\E_\epsilon \boldf_{t} = (I_n-S^t) \boldf^*$. Thus the squared bias $\|\E_\epsilon f_{t} - f^*\|_n^2 = \|S^t f^*\|_n^2 = \|S^t \gamma^* \|_2^2$. By Lemma \ref{le:bias}, $\|\E_\epsilon f_{t} - f^*\|_n^2 \leq \frac{C}{e\eta_t}$. Next, we consider the variance $\|f_{t} - \E_\epsilon f_{t}\|_n^2$. Note that $\|f_{t} - \E_\epsilon f_{t}\|_n^2 = \frac{\epsilon^\top}{\sqrt{n}} (I-S^t)^2\frac{\epsilon}{\sqrt{n}}$, where $\|\frac{\epsilon}{\sqrt{n}}\|_{\psi_2}\leq \frac{L}{\sqrt{n}}$ and $\opnorm{(I-S^t)^2} \leq 1$. Recall $\|\cdot\|_{\psi_2}$ is the sub-Gaussian norm defined as $\|\epsilon\|_\psi = \sup_{p \geq 1}p^{-1/2}(\E|\epsilon|^p)^{1/p}$. 
 Here $\|\epsilon\|_{\psi_2} \leq L$, with $L$ as an absolute constant.  Then by Hanson-Wright concentration inequality (\cite{rudelson2013hanson}), 
\begin{align*}
& \Pr \Big(\|f_{t}-\E_\epsilon f_{t}\|_n^2 - \E_\epsilon\|f_{t}-\E_\epsilon f_{t}\|_n^2 \geq \frac{\tr ((I-S^t)^2)}{2n} \big|\boldx\Big)\\
= & \Pr \Big(\frac{1}{n}\epsilon^\top (I-S^t)^2 \epsilon - \frac{\tr((I-S^t)^2)}{n} \geq \frac{\tr ((I-S^t)^2)}{2n} \big|\boldx \Big)\\
\leq & \exp\Big( -c \min \big(\frac{\tr^2((I-S^t)^2)}{4K^4\|(I-S^t)^2\|^2_{\text{F}}}, \frac{\tr((I-S^t)^2)}{\opnorm{(I-S^t)^2}}\big)\Big)\\
\leq &  \exp(-c \tr((I-S^t))^2)),
\end{align*}
where $\|\cdot\|_{\text{F}}$ is the Frobenius norm. The last inequality holds by the fact that $\|(I-S^t)^2\|^2_{\text{F}} \leq \opnorm{(I-S^t)^2} \tr((I-S^t)^2)$ and $\opnorm{(I-S^t)^2} \leq 1$. Lastly, by (\ref{eq:trace:upper}), $\tr((I-S^t)^2) \geq \frac{\tilde{\kappa}_t}{2^4}$, which goes to $+\infty$ as $t\to \infty$. Then we have, with probability approaching 1, 
$
\|f_{t}-\E_\epsilon f_{t}\|_n^2 \leq \frac{3}{2} \mu_{n,t}$.

\end{proof}

\subsection{Proof of Lemma \ref{le:connection}}
\begin{proof}
Note that $\tr\big((\Lambda + \lambda I_n)^{-1}\Lambda \big)^4 \asymp \tr\big( I-S^t\big)^4$ is equivalent to $\tr\big((\Lambda + \lambda I_n)^{-1}\Lambda \big) \asymp \tr\big( I-S^t\big)$. 
Let $\kappa_\lambda = \argmin\{j: \widehat{\mu}_j \leq \lambda \}-1$, then 
\begin{equation*}
\tr\big((\Lambda + \lambda I_n)^{-1}\Lambda \big)  = \sum_{i=1}^{\kappa_\lambda}\frac{\widehat{\mu}_i}{\widehat{\mu}_i + \lambda} + \sum_{i=\kappa_\lambda +1}^{n}\frac{\widehat{\mu}_i}{\widehat{\mu}_i + \lambda}
\end{equation*}
For $i\leq \kappa_\lambda$, we have $0< \lambda < \widehat{\mu}_i $, then
$\frac{1}{2}\kappa_\lambda \leq \sum_{i=1}^{\kappa_\lambda}\frac{\widehat{\mu}_i}{\widehat{\mu}_i + \lambda} \leq \kappa_\lambda$. 
For $i > \kappa_\lambda$, we have $0\leq \widehat{\mu}_i < \lambda$, then 
$\frac{1}{2\lambda}\sum_{i=\kappa_\lambda +1}^n \widehat{\mu}_i \leq \sum_{i=\kappa_\lambda+1}^{n}\frac{\widehat{\mu}_i}{\widehat{\mu}_i + \lambda} \leq \frac{1}{\lambda}\sum_{i=\kappa_\lambda +1}^n \widehat{\mu}_i$. Therefore, 
$$
\tr\big((\Lambda + \lambda I_n)^{-1}\Lambda \big) \asymp \kappa_\lambda + \frac{1}{\lambda} \sum_{i=\kappa_\lambda+1}^n \widehat{\mu}_i \asymp \sum_{i=1}^n \min\{1, \frac{1}{\lambda} \widehat{\mu}_i\}. 
$$
On the other hand, by Lemma \ref{le:shrink}, we have $\tr\big( I-S^t\big) \asymp \sum_{i=1}^n \min\{1, \eta_t \widehat{\mu}_i\}$. Then, it is obvious that $\tr\big((\Lambda + \lambda I_n)^{-1}\Lambda \big) \asymp \tr\big( I-S^t\big)$ holds if and only if $\lambda \asymp \frac{1}{\eta_t}$.
\end{proof}

\subsection{Some auxiliary lemmas}
\begin{Lemma}[\cite{raskutti2014early}Property of Shrinkage matrices $S^t$]\label{le:shrink}
For all indices $j \in \{1,2,\cdots, n\}$, the shrinkage matrices $S^t$ satisfy the bounds 
\begin{align*}
& 0 \leq (S^t)_{jj}^2 \leq \frac{1}{2e\eta_t \widehat{\mu}_j}, \quad \text{and}\\
& \frac{1}{2} \min\{1, \eta_t \widehat{\mu}_j\} \leq 1-S_{jj}^t \leq \min\{1, \eta_t\widehat{\mu}_j\}
\end{align*}
\end{Lemma}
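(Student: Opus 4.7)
The plan is to reduce everything to scalar inequalities. Because $\Lambda$ is diagonal and $S^t = \prod_{\tau=0}^{t-1}(I_n - \alpha_\tau \Lambda)$, the matrix $S^t$ is itself diagonal with $j$-th entry $S^t_{jj} = \prod_{\tau=0}^{t-1}(1 - \alpha_\tau \widehat\mu_j)$. Under Assumption \ref{assump:step_size}, $\alpha_\tau \le 1/\widehat\mu_1 \le 1/\widehat\mu_j$, so each factor lies in $[0,1]$ and in particular $S^t_{jj}\ge 0$. So the whole lemma is really a one-variable statement about $\prod(1-a_\tau)$ where $a_\tau = \alpha_\tau \widehat\mu_j \in [0,1]$ and $\sum a_\tau = \eta_t\widehat\mu_j$.

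First I would establish the crucial exponential bound $S^t_{jj}\le \exp(-\eta_t\widehat\mu_j)$, obtained by applying the elementary inequality $1-y\le e^{-y}$ termwise and multiplying. For the upper bound on $(S^t)_{jj}^2$, I square this to get $(S^t)_{jj}^2\le \exp(-2\eta_t\widehat\mu_j)$, and then use the standard fact that $xe^{-x}\le 1/e$ (attained at $x=1$) with $x=2\eta_t\widehat\mu_j$; this rearranges to exactly $(S^t)_{jj}^2\le 1/(2e\eta_t\widehat\mu_j)$.

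Next I would handle the two-sided bound on $1-S^t_{jj}$. For the upper bound: $1-S^t_{jj}\le 1$ trivially, and by a short induction on $t$ (or by expanding $\prod(1-a_\tau)$ and discarding nonnegative higher-order terms) one has $1-\prod_\tau(1-a_\tau) \le \sum_\tau a_\tau$, which gives $1-S^t_{jj}\le \eta_t\widehat\mu_j$. Taking the minimum yields the stated upper bound. For the matching lower bound, I would split on whether $\eta_t\widehat\mu_j$ is above or below $1$. If $\eta_t\widehat\mu_j\ge 1$, the exponential bound gives $S^t_{jj}\le e^{-1}<1/2$, so $1-S^t_{jj}\ge 1/2 = \tfrac12\min\{1,\eta_t\widehat\mu_j\}$. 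If $\eta_t\widehat\mu_j\le 1$, then $1-S^t_{jj}\ge 1-e^{-\eta_t\widehat\mu_j}\ge \tfrac12\eta_t\widehat\mu_j$, using the scalar inequality $1-e^{-y}\ge y/2$ valid on $[0,1]$ (the difference $1-e^{-y}-y/2$ vanishes at $0$, is concave after its maximum at $y=\log 2$, and is still positive at $y=1$ since $1-1/e-1/2>0$).

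There is no real obstacle here; everything rests on three one-variable inequalities ($1-y\le e^{-y}$, $xe^{-x}\le 1/e$, $1-e^{-y}\ge y/2$ for $y\in[0,1]$). The only point that deserves a careful sentence of justification is the last inequality, since its constant $1/2$ is what produces the clean factor $1/2$ in the stated lower bound; I would verify it by the endpoint/concavity argument just sketched rather than leaving it as ``elementary.''
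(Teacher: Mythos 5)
Your proof is correct and self-contained. Since the paper itself gives no proof of this lemma and simply cites it from \cite{raskutti2014early}, there is no in-paper argument to compare against; your derivation recovers the standard proof from that reference, reducing to the scalar inequalities $1-y\le e^{-y}$, $xe^{-x}\le 1/e$, and $1-e^{-y}\ge y/2$ on $[0,1]$, and the case split on whether $\eta_t\widehat\mu_j$ is above or below $1$ is exactly the right move for the lower bound. The only place I would tighten the wording: the initial observation that each factor lies in $[0,1]$ relies on $\alpha_\tau\widehat\mu_j\le \alpha_\tau\widehat\mu_1\le 1$ via Assumption~\ref{assump:step_size} together with the ordering $\widehat\mu_j\le\widehat\mu_1$, which you state; it is worth also noting that the degenerate case $\widehat\mu_j=0$ makes both sides of every inequality trivially consistent (the $1/(2e\eta_t\widehat\mu_j)$ bound is vacuous and $1-S^t_{jj}=\min\{1,\eta_t\widehat\mu_j\}=0$), so no division-by-zero issue arises.
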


\begin{Lemma}[\cite{raskutti2014early}Bounding the squared bias] \label{le:bias}
$\|S^t\gamma^* \|_2^2 \leq \frac{C}{e\eta_t}$, where $C$ is the constrain that $\|f\|_{\mathcal{H}} \leq C$. 
\end{Lemma}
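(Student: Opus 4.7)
The plan is to combine the representer-theorem decomposition of $f^*$ with the pointwise shrinkage bound already stated in Lemma~\ref{le:shrink}. First I would pass to eigen-coordinates. By the representer theorem, write $f^*(\cdot)=\sum_{i=1}^n K(x_i,\cdot)w_i+\xi(\cdot)$ with $\xi$ orthogonal in $\cH$ to $\text{span}\{K(x_i,\cdot)\}$; since $\xi$ vanishes on the sample, $\boldf^*=(f^*(x_1),\ldots,f^*(x_n))^\top=K\boldw=n\boldK\boldw$. Setting $\boldw=U\boldalpha$ and using the SVD $\boldK=U\Lambda U^\top$ yields $\gamma^*=U^\top\boldf^*/\sqrt{n}=\sqrt{n}\,\Lambda\boldalpha$, i.e.\ $\gamma^*_j=\sqrt{n}\,\widehat\mu_j\,\alpha_j$ for every $j$.

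Next I would relate the RKHS norm to these rotated coordinates through the reproducing property:
$$
\|f^*\|_{\cH}^2 = \boldw^\top K\boldw + \|\xi\|_{\cH}^2 \;\geq\; n\,\boldalpha^\top\Lambda\boldalpha \;=\; \sum_{j=1}^n n\,\widehat\mu_j\,\alpha_j^2 \;=\; \sum_{j=1}^n \frac{(\gamma^*_j)^2}{\widehat\mu_j}.
$$
Hence the hypothesis $\|f^*\|_{\cH}\le C$ translates into the weighted $\ell^2$ constraint $\sum_j (\gamma^*_j)^2/\widehat\mu_j\le C^2$.

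Finally, since $S^t$ is diagonal, I would decompose
$$
\|S^t\gamma^*\|_2^2 = \sum_{j=1}^n (S^t_{jj})^2(\gamma^*_j)^2 = \sum_{j=1}^n \widehat\mu_j\,(S^t_{jj})^2 \cdot \frac{(\gamma^*_j)^2}{\widehat\mu_j},
$$
and invoke Lemma~\ref{le:shrink} to obtain $\widehat\mu_j(S^t_{jj})^2\le 1/(2e\eta_t)$ uniformly in $j$. Combining this with the preceding constraint yields $\|S^t\gamma^*\|_2^2\le C^2/(2e\eta_t)$, which matches the claimed $O(1/\eta_t)$ bound (with a universal constant absorbing the discrepancy between $C$ and $C^2$). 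There is no substantive obstacle here; the only subtlety is bookkeeping the $\sqrt{n}$ factors consistently between the representer-theorem normalization, the empirical kernel $\boldK=K/n$, and the rotated vector $\gamma^*$, so that the Hilbert norm genuinely upper-bounds the weighted sum $\sum_j (\gamma^*_j)^2/\widehat\mu_j$ and the uniform pointwise bound on $\widehat\mu_j(S^t_{jj})^2$ can be pulled out of the sum.
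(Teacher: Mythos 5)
Your argument is correct and is essentially the proof of Lemma~8 in \cite{raskutti2014early}, which the paper cites here rather than reproving: you reduce the RKHS-ball constraint to the weighted-$\ell^2$ constraint $\sum_j (\gamma_j^*)^2/\widehat\mu_j \le C^2$ and then pull out the uniform shrinkage bound $\widehat\mu_j(S^t_{jj})^2 \le 1/(2e\eta_t)$ from Lemma~\ref{le:shrink}. The $1/\sqrt{n}$ normalization discrepancy you flag does wash out (the coefficient vector $\boldalpha$ simply absorbs the rescaling, leaving $\sum_j (\gamma_j^*)^2/\widehat\mu_j \le \|f^*\|_{\cH}^2$ intact), and your resulting bound $C^2/(2e\eta_t)$ is in fact the correct one --- the lemma's stated $C/(e\eta_t)$ is loose bookkeeping that conflates the radius $C$ with $C^2$ and drops the factor $1/2$.
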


\begin{Lemma}[\cite{liu2018nonparametric}]\label{le:tail:eigenvalues}
For $t\geq 0$, if $\eta_t < n$, 
then with probability at least $1-4e^{-\kappa_{t}}$, 
$\sum_{i=\widehat{\kappa}_{t+1}}^n \widehat{\mu}_i \leq C \kappa_{t} \mu_{\kappa_{t}}$,
where $C>0$ is an absolute constant. 
\end{Lemma}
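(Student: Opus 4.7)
The plan is to exploit concentration of empirical kernel eigenvalues around their population counterparts, together with the assumed eigendecay (PDK or EDK), to transfer a tail bound on $\{\mu_i\}$ to a tail bound on $\{\widehat{\mu}_i\}$. This is essentially the route taken in \cite{liu2018nonparametric}; the sketch below outlines how it goes in our notation, where I read $\widehat{\kappa}_{t+1}$ as $\widehat{\kappa}_t + 1$, i.e., the first index past the empirical cutoff $\widehat{\kappa}_t = \argmin\{j:\widehat{\mu}_j < 1/\eta_t\}-1$.

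First, I would compare $\widehat{\kappa}_t$ to $\kappa_t$. Let $\widehat{T}_n f = \frac{1}{n}\sum_{i=1}^n K(x_i,\cdot)f(x_i)$ be the empirical kernel integral operator and $Tf = \EE_X[K(X,\cdot)f(X)]$ its population version; by Mercer's theorem their eigenvalues are exactly $\widehat{\mu}_i$ and $\mu_i$. Weyl's inequality gives $|\widehat{\mu}_i - \mu_i| \leq \opnorm{\widehat{T}_n - T}$ uniformly in $i$. An operator Bernstein / Koltchinskii--Gin\'e concentration inequality, applied at a scale calibrated to $\kappa_t$ and leveraging the uniform boundedness of eigenfunctions from Assumption~\ref{asmp:A1}, yields $\opnorm{\widehat{T}_n - T} \ll \mu_{\kappa_t}$ with probability at least $1 - 4e^{-\kappa_t}$. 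The hypothesis $\eta_t < n$ keeps $\kappa_t < n$, which is what makes this estimate nontrivial. On this event, $\widehat{\kappa}_t$ is of the same order as $\kappa_t$; in particular $\widehat{\kappa}_t + 1 \geq \kappa_t/2$.

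Second, on that same event I would pass from the empirical tail sum to the population tail sum. By Ky Fan majorization,
$$
\sum_{i > k} \widehat{\mu}_i \;\leq\; \sum_{i > k} \mu_i + \text{concentration residual},
$$
where the residual is of strictly smaller order by the bound in step one. Combined with $\widehat{\kappa}_t + 1 \gtrsim \kappa_t$, this yields $\sum_{i=\widehat{\kappa}_t+1}^n \widehat{\mu}_i \lesssim \sum_{i \geq \kappa_t/2} \mu_i$. For PDK of order $m$, $\mu_i \asymp i^{-2m}$ gives $\sum_{i\geq k}\mu_i \asymp k^{-(2m-1)} \asymp k\mu_k$; for EDK with $\mu_i \asymp e^{-\beta i^p}$ the tail is even smaller. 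Substituting $k \asymp \kappa_t$ and using $\mu_{\kappa_t/2} \asymp \mu_{\kappa_t}$ (again from the decay profile) delivers the claimed bound $C\kappa_t \mu_{\kappa_t}$.

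The main obstacle is step one: securing the probability exponent $-\kappa_t$ rather than something coarser such as $-\sqrt{n}$. A naive operator Bernstein gives a concentration rate of order $n^{-1/2}$, which is not enough. The fix I would pursue is a spectral truncation of $T$ into its top-$\kappa_t$ block plus a tail: apply matrix Bernstein only to the truncated block, where the variance proxy scales with $\kappa_t$ through $\|\phi_j\|_\infty \leq c_K$, and dispatch the tail deterministically using Assumption~\ref{asmp:A1} and the kernel eigendecay. This splitting produces exactly the exponent $-\kappa_t$ demanded by the statement.
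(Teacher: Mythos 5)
This lemma is imported from \cite{liu2018nonparametric}; the present paper gives no proof, so I can only check your argument on its own terms. Step one (operator concentration via Weyl and a spectral truncation of $T$) is a sensible way to compare $\widehat{\kappa}_t$ with $\kappa_t$, and the splitting idea you outline for getting the exponent $-\kappa_t$ is the right instinct. The genuine gap is in step two. The ``Ky Fan majorization'' you invoke, once written out, is $\sum_{i>k}\widehat{\mu}_i = \tr(\widehat{T}_n) - \sum_{i\le k}\widehat{\mu}_i \le \bigl[\tr(\widehat{T}_n)-\tr(T)\bigr] + \sum_{i>k}\mu_i + k\,\opnorm{\widehat{T}_n-T}$, and the trace-fluctuation term $\tr(\widehat{T}_n)-\tr(T) = \frac{1}{n}\sum_i\bigl(K(x_i,x_i)-\E K(X,X)\bigr)$ does not vanish at the required rate: it is $O_P(n^{-1/2})$ (with sub-Gaussian tails at scale $\sqrt{\kappa_t/n}$ on the event of probability $1-e^{-\kappa_t}$), whereas the target $\kappa_t\mu_{\kappa_t}$ is of order $\kappa_t^{1-2m}$ for PDK. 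With $\kappa_t$ as large as $n^{1/(2m)}$ (permitted by the hypothesis $\eta_t<n$), one has $\kappa_t\mu_{\kappa_t}\asymp n^{(1-2m)/(2m)} \ll n^{-1/2}$ whenever $m>1$, so the residual you hope is of ``strictly smaller order'' actually dominates. Your proof therefore fails precisely in the regime $m>3/2$ where the paper applies the lemma.

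The bound is in fact deterministic at its core, and a much shorter route avoids the trace comparison entirely. Split the kernel as $K = K_{\le k} + K_{>k}$ with $K_{\le k}(x,x') = \sum_{j\le k}\mu_j\phi_j(x)\phi_j(x')$, so that the empirical kernel matrix decomposes as $\boldK = \boldK_{\le k} + \boldK_{>k}$ with both pieces PSD and $\rank(\boldK_{\le k})\le k$. Weyl's inequality for a rank-$k$ perturbation gives $\widehat{\mu}_{k+i}\le \lambda_i(\boldK_{>k})$ for all $i\ge 1$, hence $\sum_{i>k}\widehat{\mu}_i \le \tr(\boldK_{>k}) = \frac{1}{n}\sum_{i=1}^n\sum_{j>k}\mu_j\phi_j(x_i)^2 \le c_K^2\sum_{j>k}\mu_j$ by Assumption~\ref{asmp:A1}, with no randomness involved. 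Taking $k=\kappa_t$ and using the eigendecay ($\sum_{j>\kappa_t}\mu_j\asymp\kappa_t\mu_{\kappa_t}$ for PDK, and smaller for EDK) gives the tail bound; the extra block $\sum_{i=\widehat\kappa_t+1}^{\kappa_t}\widehat{\mu}_i$, if $\widehat\kappa_t<\kappa_t$, is at most $\kappa_t/\eta_t\le\kappa_t\mu_{\kappa_t}$ by definition of the cutoffs. The probability $1-4e^{-\kappa_t}$ in the lemma is used to align $\widehat\kappa_t$ with $\kappa_t$ (your step one), not to control a trace fluctuation. I would replace your step two with this rank-truncation argument and discard the Ky Fan appeal.
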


\begin{Lemma}[\cite{liu2018nonparametric}Properties of eigenvalues]\label{le:main eigenvalues}
\begin{enumerate}[$(a)$]
\item \label{le:main eigenvalues:a} Suppose that $K$ has eigenvalues satisfying $\mu_i \asymp i^{-2m}$ with $m>3/2$.
Then for $i = 1,\cdots, n^{1/(2m)}$, 
$$
\Pr\left(|\widehat{\mu}_i -\mu_i|\le \frac{1}{2}\mu_i\right)\ge 1-e^{-c_m ni^{-4m/(2m-1)}}.
$$
where $c_m$ is an universal constant depending only on $m$.

\item \label{le:main eigenvalues:b} Suppose that $K$ has eigenvalues satisfying $\mu_i \asymp \exp(-\beta i ^p)$
with $\beta>0$,  $p\geq 1$.
Then for $i=o(n^{1/2})$, 
$$
\Pr\big(|\widehat{\mu}_i -\mu_i|\le \frac{1}{2}\mu_i\big)\ge 1-e^{-c_{\beta,p} ni^{-2}},
$$
where $c_{\beta,p}$ is an universal constant depending only on $\beta$ and $p$. \\
For $i =O(n^{1/2})$, we have
$$
\Pr\left(|\widehat{\mu}_i -\mu_i|\le i\mu_i\right)\ge 1-e^{-c_{\beta,p}' n},
$$ 
where $c'_{\beta,p}$ is an universal constant depending only on $\beta$ and $p$. 
 \end{enumerate}
\end{Lemma}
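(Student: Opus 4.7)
\textbf{Proof plan for Lemma \ref{le:main eigenvalues}.}
The plan is to realize $\widehat\mu_i$ as the ordered eigenvalues of an infinite random matrix whose expectation is diagonal with entries $\mu_i$, and then apply a matrix Bernstein inequality on a carefully truncated top-$N$ block, with $N$ chosen to match the decay profile of $\mu_i$.

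First, I would use the Mercer expansion~(\ref{M:decom:K}) to rewrite the normalized kernel matrix. Let $\Phi$ be the (infinite) array $\Phi_{ji}=\phi_j(x_i)$ for $j\ge 1$, $1\le i\le n$, and $D=\diag(\mu_1,\mu_2,\ldots)$. Then $\boldK=\tfrac{1}{n}\Phi^{\top}D\,\Phi$, and the nonzero spectrum of $\boldK$ coincides with that of the operator $M:=\tfrac{1}{n}D^{1/2}\Phi\Phi^{\top}D^{1/2}$ on $\ell^2(\NN)$. By Assumption~\ref{asmp:A1} and the $L^2(P_X)$-orthonormality of $\{\phi_j\}$, one checks that $\EE[M]=D$. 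Thus $\widehat\mu_i$ and $\mu_i$ are the $i$-th ordered eigenvalues of $M$ and $\EE M$ respectively, and the task becomes a perturbation-plus-concentration problem for $M-D$.

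Next, for the target index $i$ I would pick a truncation level $N=N(i)\ge i$ and split
\[
M-D \;=\; \begin{pmatrix} A-D_{[N]} & B \\ B^{\top} & C-D_{[N]^{c}} \end{pmatrix},
\]
where the top-left block is $N\times N$. By Weyl combined with a standard Kato-type bound restricted to the top-$N$ invariant subspace, for every $i\le N$,
\[
|\widehat\mu_i-\mu_i| \;\le\; \|A-D_{[N]}\|_{\rm op} + 2\|B\|_{\rm op} + \|C-D_{[N]^{c}}\|_{\rm op}.
\]
The block $A-D_{[N]}$ is an average of $n$ i.i.d.\ centered random $N\times N$ matrices whose summands have operator norm at most $c_K^2 \mu_1 N$ and matrix variance dominated by $c_K^2\mu_1 D_{[N]}$. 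Matrix Bernstein therefore gives
\[
\Pr\!\bigl(\|A-D_{[N]}\|_{\rm op}\ge t\bigr)\;\le\;2N\exp\!\Bigl(-\tfrac{c\,n t^{2}}{\mu_1\sum_{j\le N}\mu_j+\mu_1 N t}\Bigr),
\]
with $\|B\|_{\rm op}$ controlled in the same style and $\|C-D_{[N]^c}\|_{\rm op}$ bounded by the deterministic tail $O(\sum_{j>N}\mu_j)$ plus a smaller stochastic term.

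Finally I would set $t=\mu_i/2$ and tune $N$ to each decay class. For PDK ($\mu_j\asymp j^{-2m}$, $m>3/2$), choose $N\asymp i$: then $\sum_{j\le N}\mu_j=O(1)$ and $\sum_{j>N}\mu_j=O(i^{1-2m})=o(\mu_i)$ whenever $i\le n^{1/(2m)}$, so the variance term in Bernstein dominates and yields exactly the exponent $-c_m n i^{-4m/(2m-1)}$ after plugging in $t=\mu_i/2\asymp i^{-2m}$. For EDK ($\mu_j\asymp\exp(-\beta j^{p})$), the geometric tail makes $\sum_{j>N}\mu_j\asymp\mu_N$, and the same balance yields $-c_{\beta,p}\,n i^{-2}$ in the regime $i=o(n^{1/2})$; at the boundary $i=O(n^{1/2})$ the sub-exponential $\mu_1 N t$ term in Bernstein ceases to be negligible, forcing one to relax the target from $\mu_i/2$ to $i\mu_i$, which gives the second claim of part (b).

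The main obstacle is the bookkeeping in the third step: one must choose $N$ so that the Bernstein exponent, the off-diagonal perturbation $\|B\|_{\rm op}$, and the deterministic tail $\sum_{j>N}\mu_j$ are simultaneously $O(\mu_i)$ with matching probability, because any one of these three can destroy the per-$i$ rate if mistuned. The tightness of the specific exponent $4m/(2m-1)$ (resp.\ $2$) is precisely what emerges from this balance, and the weaker $i\mu_i$ statement near $i\asymp n^{1/2}$ is an artifact of the Bernstein regime transition that no simple truncation choice can circumvent.
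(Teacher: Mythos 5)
This lemma is cited verbatim from \cite{liu2018nonparametric}; the present paper does not supply a proof, so there is no internal argument to compare your plan against. Judged on its own merits, however, your plan has two concrete gaps that prevent it from reaching the stated rates.

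First, in the PDK case your choice $N\asymp i$ does not control the tail block. You assert $\sum_{j>N}\mu_j = O(i^{1-2m}) = o(\mu_i)$, but with $\mu_i \asymp i^{-2m}$ this is $i^{1-2m} = i\cdot\mu_i$, which is \emph{larger} than $\mu_i$ by a factor $i\to\infty$, not smaller. Hence $\|C-D_{[N]^c}\|_{\rm op}$, which you bound through $\sum_{j>N}\mu_j$, swamps the target error $\mu_i/2$ and the perturbation bound collapses. Forcing the tail below $\mu_i$ requires $N\gtrsim i^{2m/(2m-1)}\gg i$; with that enlarged $N$ the bookkeeping in your top block and Bernstein step must be redone, and the balance you describe no longer happens at $N\asymp i$.

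Second, the exponent you extract from matrix Bernstein does not match the lemma. With $t=\mu_i/2\asymp i^{-2m}$, $\sum_{j\le N}\mu_j = O(1)$, and $Nt\to 0$, your Bernstein tail is $\exp(-c\,n t^2)\asymp\exp(-c\,n i^{-4m})$, whereas the lemma asserts $\exp(-c_m n i^{-4m/(2m-1)})$, a strictly slower-decaying exponent that remains nontrivial up to $i\asymp n^{(2m-1)/(4m)}$, i.e.\ past the range $i\lesssim n^{1/(4m)}$ where the uniform bound $n i^{-4m}$ becomes vacuous. The same mismatch is even starker in the EDK case: plugging $t=\mu_i/2\asymp e^{-\beta i^p}$ into a Bernstein bound driven by $nt^2$ gives $\exp(-c\,n e^{-2\beta i^p})$, which is vacuous once $i^p\gtrsim\log n$, yet the lemma claims $\exp(-c_{\beta,p}\,n i^{-2})$, a \emph{polynomial} exponent in $i$ that survives for all $i=o(n^{1/2})$. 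A Weyl-plus-absolute-Bernstein bound on a fixed truncated block simply cannot produce a per-index rate that is polynomial in $i$ and independent of the magnitude of $\mu_i$; the argument needs a genuinely \emph{relative} concentration mechanism (e.g.\ one that tracks $|\widehat\mu_i-\mu_i|/\mu_i$ via an effective-dimension or intrinsic-dimension refinement), not just a sum of operator norms of blocks. Until you build in that relative structure and re-tune $N(i)$ accordingly, the plan does not establish the exponents claimed in either part (a) or part (b).
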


\subsection{Additional Numerical study}\label{supp:numerical}
In this section, we further compare our testing method (ES) with an oracle version of stopping rule (oracle ES) that uses knowledge of $f^*$, as well as the test based on the penalized regularization.

Data were generated from the regression model (\ref{eq:model}) with $f(x_i) = c(0.8(x_i-0.5)^2 + 0.2\sin(4\pi x_i))$, where $x_i\overset{iid}{\sim}\text{Unif}[0,1]$ and $c=0,0.5,0.8,1,1.2$ respectively. $c=0$ is used for examining the size of the test, and $c>0$ is used for examining the power of the test. The sample size $n$ is ranged from $100$ to $1000$. We use the second-order Sobolev kernel with polynomial eigen-decay (i.e., $m=2$) to fit the data. Significance level was chosen as $0.05$. Both size and power were calculated as the proportions of rejections based on $500$ independent replications. For the ES, we use boostrap method to approximate the bias with $B=10$ and the step size $\alpha=1$. For the penalization-based test, we use $10-$fold cross validation (10-fold CV) to select the penalty parameter. For the oracle ES, we follow the stopping rule in Section \ref{eq:exact_stop_rule} with constant step size $\alpha=1$. The power increases when the nonparametric signal $c$ increases for $c>0$. Overall, the interpretations are similar to Figure \ref{figure:sim:compare} for EDK in Section \ref{sec:num:1}.

\begin{figure}[htb!]
\centering
  \begin{tabular}{ccc}
 \includegraphics[scale=0.28]{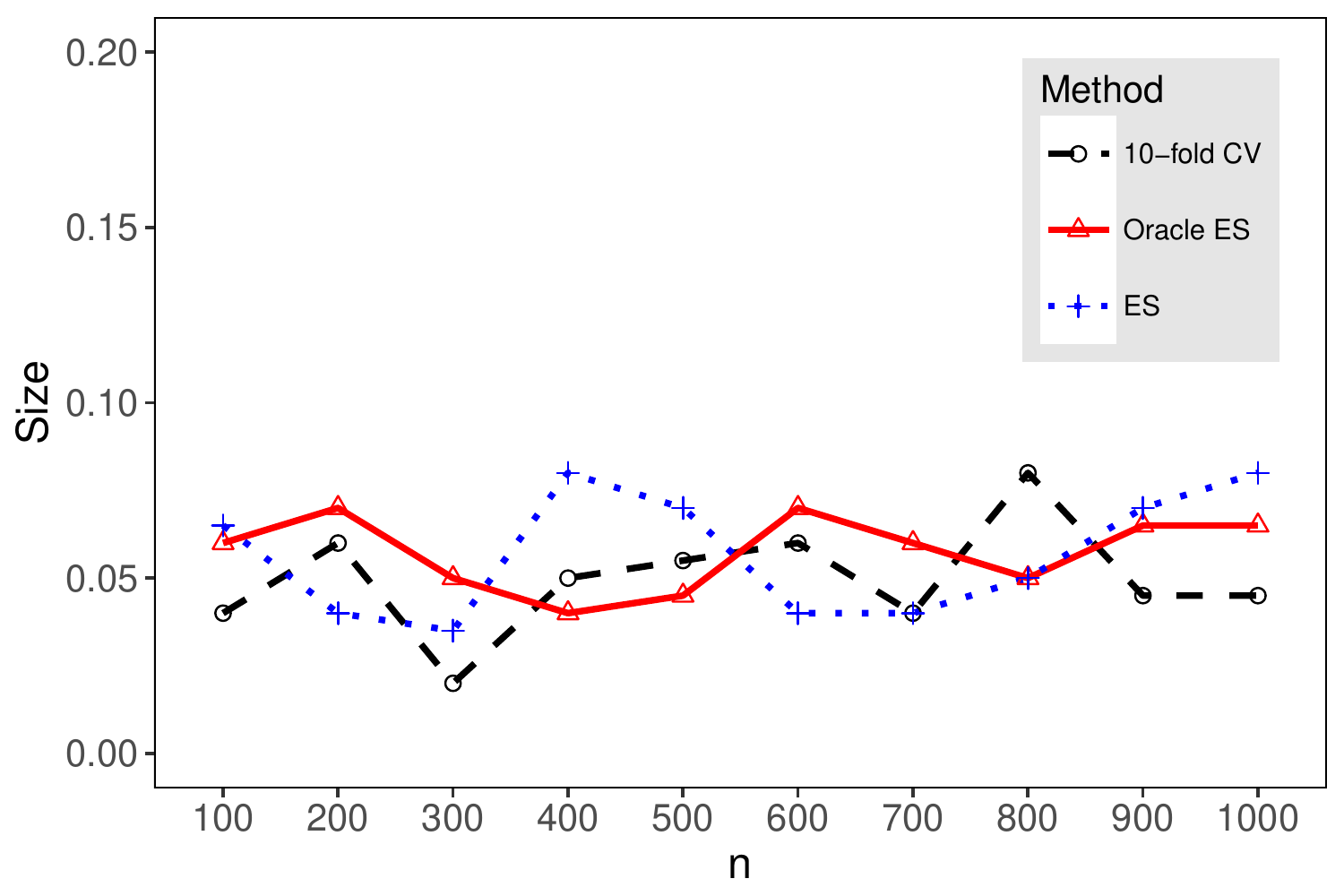}& \includegraphics[scale=0.28]{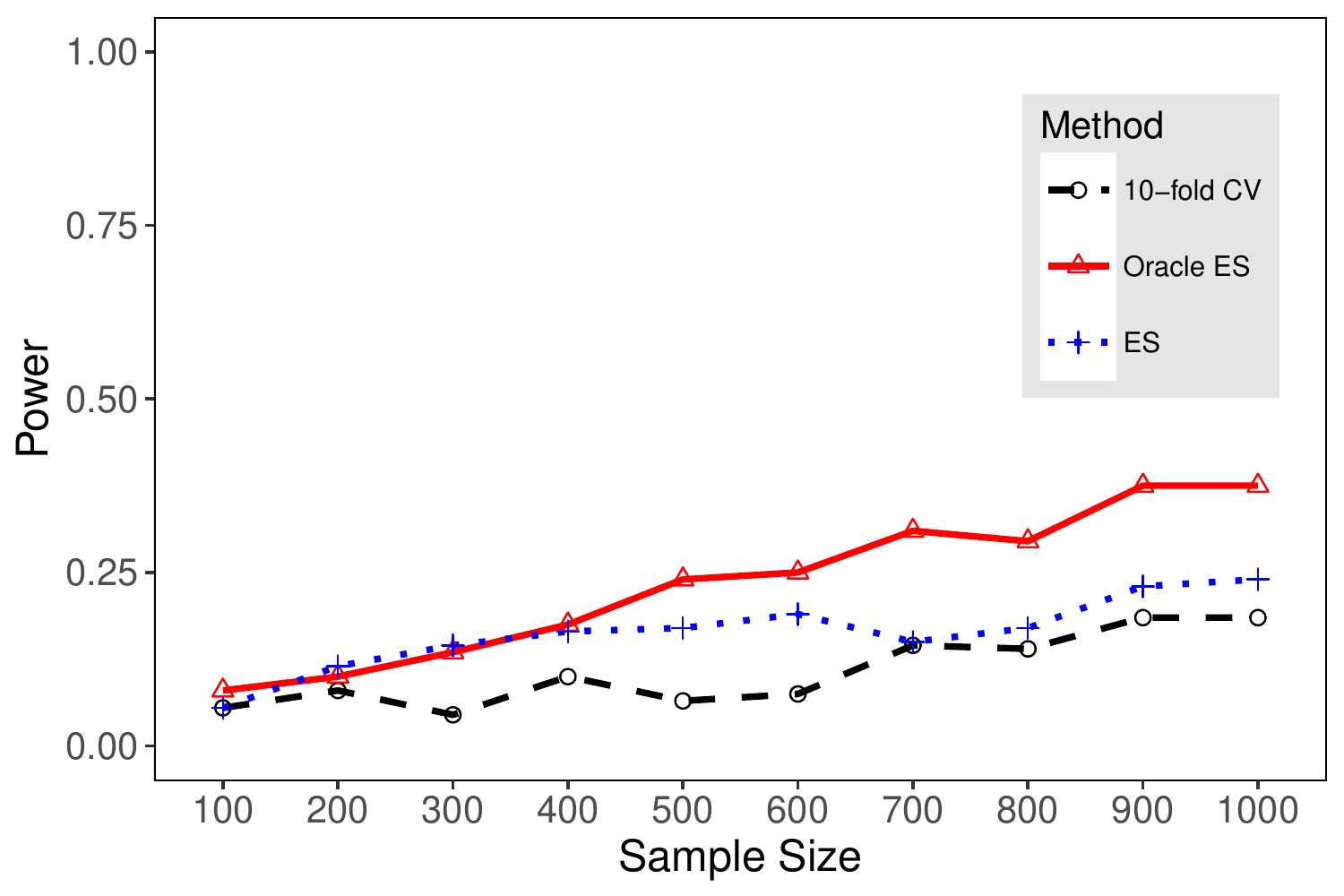}&\includegraphics[scale=0.28]{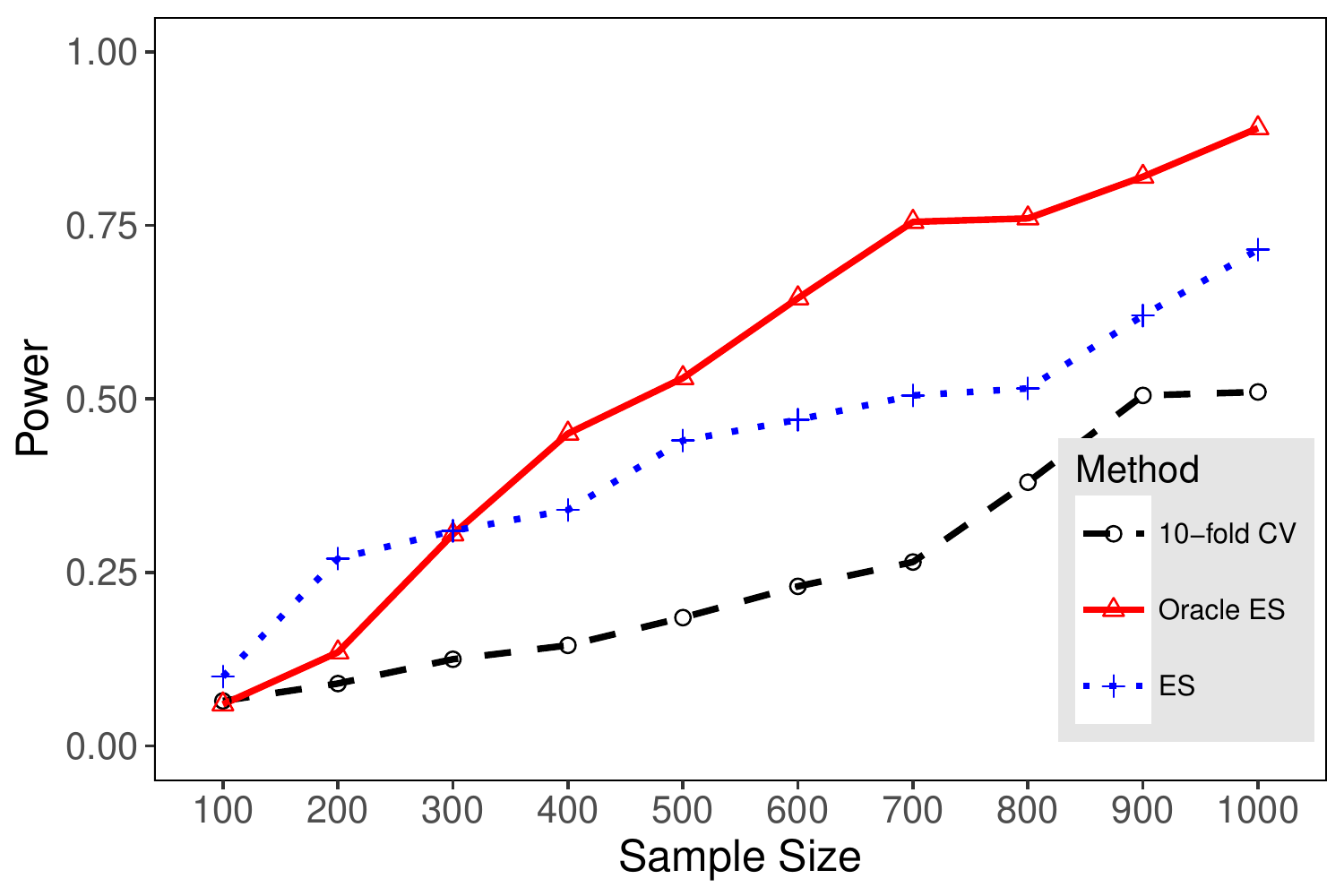} \\   ~~~~~$(a)$ &~~~~~ $(b)$&~~~~~ $(c)$\\
  \includegraphics[scale=0.28]{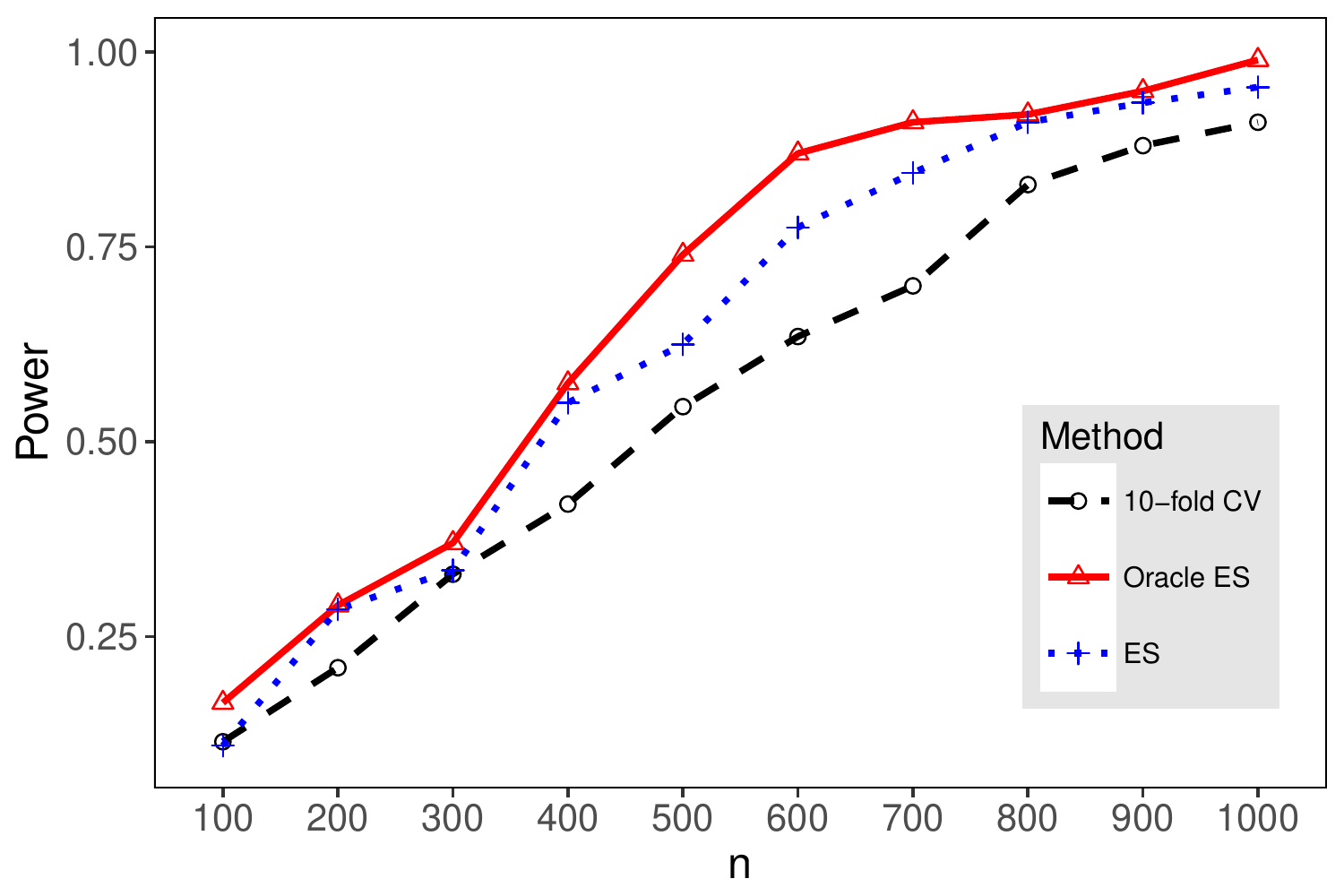}& \includegraphics[scale=0.28]{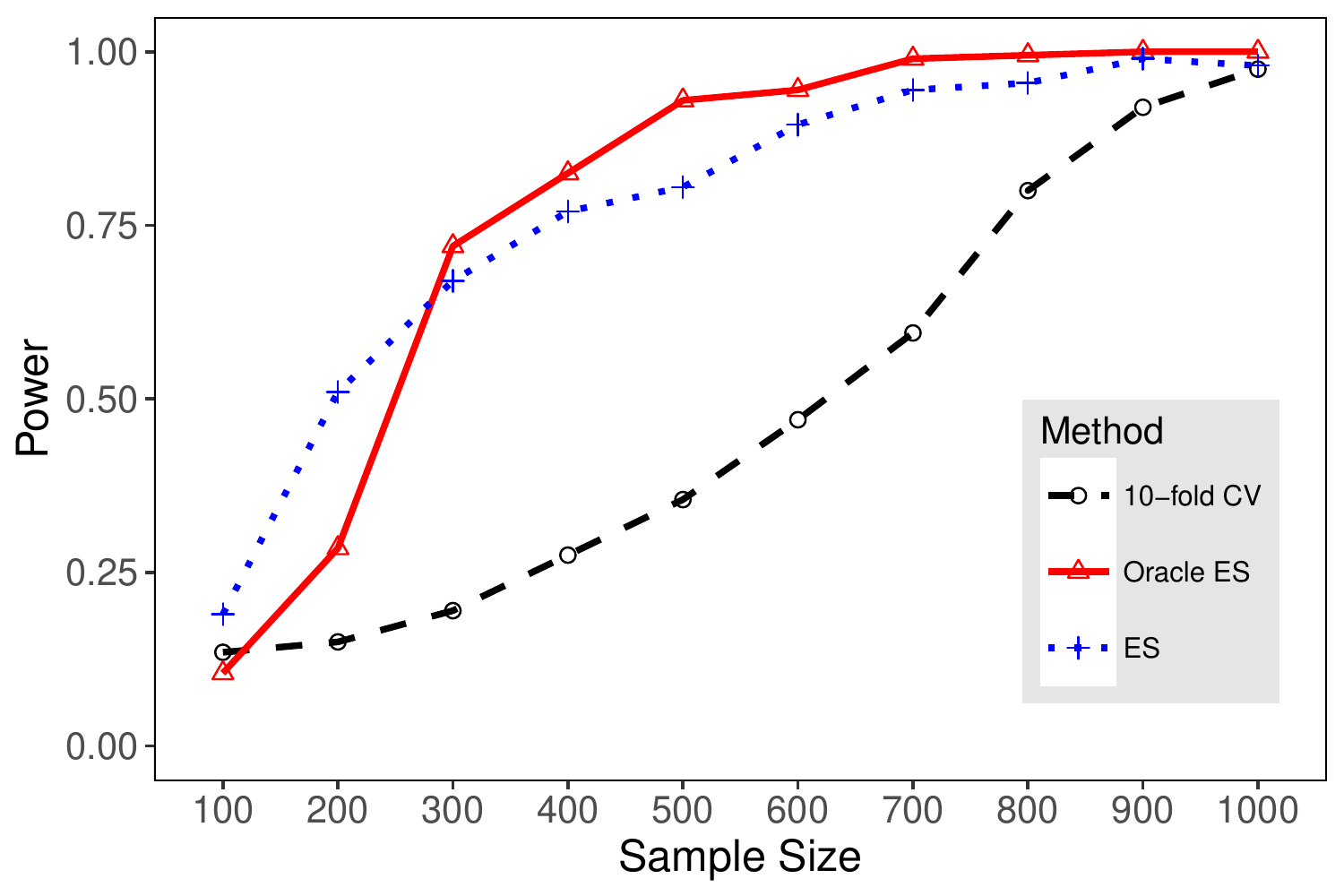}&\includegraphics[scale=0.28]{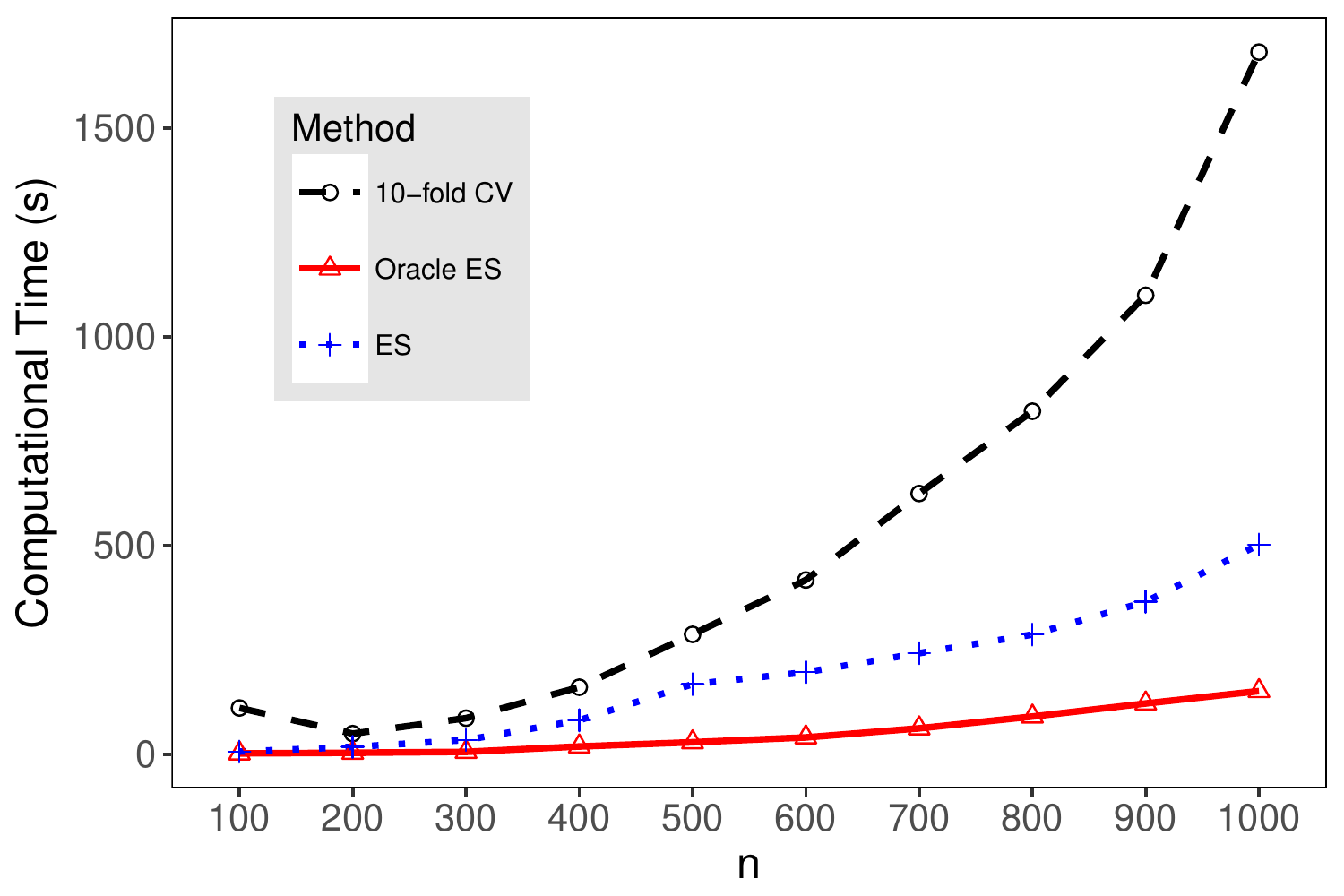} \\   ~~~~~$(d)$ &~~~~~ $(e)$&~~~~~ $(f)$\\
 \end{tabular}
\caption{ $(a)$ is the size with signal strength $c=0$; $(b)$ is the power with signal strength $c=0.5$; $(c)$ is the power with $c=0.8$; $(d)$ is the power with $c=1.0$; $(e)$ is the power with $c=1.2$;$(f)$ is the computational time (in seconds) for the three testing rules.}\label{figure:sim:compare:pdk}
\end{figure}

\bibliographystyle{plainnat}
\bibliography{mml}

\end{document}